\documentclass[12pt]{article}
\setlength{\textwidth}{6.5in}
\setlength{\textheight}{9in}
\setlength{\topmargin}{-0.5in}
\setlength{\oddsidemargin}{0in}
\setlength{\evensidemargin}{0in}

\usepackage{amsmath}
\usepackage{amssymb}
\usepackage{amsfonts}
\usepackage{comment}
\usepackage{color}
\usepackage{mathtools}
%\allowdisplaybreaks
\usepackage{tikz}
\DeclarePairedDelimiter{\ceil}{\lceil}{\rceil}
\DeclarePairedDelimiter{\floor}{\lfloor}{\rfloor}
\usepackage{amsthm}

\newtheorem*{notation*}{Notation}
\newtheorem{definition}{Definition}
\newtheorem{theorem}{Theorem}[section]
\newtheorem{proposition}[theorem]{Proposition}
\newtheorem{lemma}[theorem]{Lemma}
\newtheorem{corollary}[theorem]{Corollary}
\newtheorem{algorithm}{Algorithm}
\newtheorem*{remark}{Remark}
\numberwithin{equation}{section}
\DeclareMathOperator{\sat}{sat}
\DeclareMathOperator{\cov}{cov}

\def \L {\mathcal{L}}
\def \M {\mathcal{M}}
\def \S {\mathcal{S}}
\def \A {\mathcal{A}}

\tikzstyle{cir} = [draw, circle, minimum height= 20 mm]

\title{Extremal graphs with local covering conditions}
\author{Debsoumya Chakraborti\thanks{Department of Mathematical Sciences, Carnegie Mellon University. Email: {\tt dchakrab@cmu.edu}. Research supported in part by National Science Foundation CAREER Grant DMS-1455125.} \ and Po-Shen Loh\thanks{Department of Mathematical Sciences, Carnegie Mellon University. Email: {\tt ploh@cmu.edu}. Research supported in part by National Science Foundation CAREER Grant DMS-1455125.}}
\begin{document}
\maketitle
\begin{abstract}

We systematically study a natural problem in extremal graph theory, to
minimize the number of edges in a graph with a fixed number of vertices,
subject to a certain local condition: each vertex must be in a copy of a
fixed graph $H$. We completely solve this problem when $H$ is a clique, as
well as more generally when $H$ is any regular graph with degree at least
about half its number of vertices. We also characterize the extremal graphs 
when $H$ is an Erd\H{o}s-R\'enyi random graph. The extremal
structures turn out to have the similar form as the conjectured extremal
structures for a well-studied but elusive problem of similar flavor with
local constraints: to maximize the number of copies of a fixed clique in
graphs in which all degrees have a fixed upper bound.

\end{abstract}

\section{Introduction}
\subsection{Problem and motivation}

Extremal graph theory considers problems of maximizing or minimizing certain 
graph parameters in graph classes of interest. The most classical
example is the Tur\'an problem of maximizing the number of edges in a graph
with a given number of vertices, subject to the condition of being $H$-free
(having no subgraph isomorphic to a fixed graph $H$). Tur\'an solved the
problem completely when $H$ is a complete graph in \cite{T}, and for
general $H$ with chromatic number at least 3, the Erd\H os-Stone-Simonovits
Theorem \cite{ES} provides an asymptotic answer. When $H$ has chromatic
number 2, the problem is more intricate, with many longstanding open
questions remaining (see, e.g., the surveys \cite{FS} and \cite{S}).

A variety of questions of similar flavor have been the focus of significant
research attention. When one generalizes the problem to maximizing the
number of cliques of order $t \ge 3$ in an $n$-vertex $H$-free graph,
even the basic question where $H = K_{1, D}$ is a star (translating into a
maximum degree condition of $D-1$) is not completely understood.  Indeed,
if $n = qD+r$ ($q \in \mathbb{N}$ and $1 \le r \le D$), the maximum number
of cliques $K_t$ is conjectured to be obtained by the disjoint union of $q$
many $K_{D}$ and one $K_r$. Although for $q = 1$ this conjecture is
resolved in \cite{GLS}, the general problem is still wide open despite substantial 
effort (see, eg., \cite{CR}). It appears to be difficult to prove extremality of this 
type of structure, with many disjoint copies of the same clique, and one 
residual graph which depends on the residue class of $n$.

An equivalent form of the above conjecture was asked by Engbers and Galvin in \cite{EG}.
In this equivalent form, we seek the maximum number of independent sets 
of order $t$ in an $n$-vertex graph with minimum degree at least $\delta$.
It is easy to see the equivalence between this statement and the aforementioned 
conjecture by considering the complement of a graph. This is now the same as
asking for the maximum number of independent sets of order $t$ in an $n$-vertex 
graph where every vertex $v$ is in a copy of $K_{1,\delta}$ with $v$ being the 
root of $K_{1,\delta}$. It is natural to consider the same problem with $K_{1,\delta}$ 
replaced by a general graph $H$. In this paper, we will consider a 
weaker condition where we do not require that each vertex $v$ 
in $G$ is a specific vertex in a copy of $H$. Note that this does not change 
the problem when $H$ is a complete graph. It turns out that even the case 
$t=2$ for this problem is non-trivial and has some surprising results, which
will be the main topic of this article. Note that maximizing the number of independent 
sets of order $2$ is equivalent to minimizing the number of edges in a graph.  

On the topic of $H$-free graphs, the question of minimizing the number of
edges becomes interesting when one adds the further property that adding
one more edge to $G$ will create a copy of $H$. This motivates the definition of $\sat(n,H)$, 
which is the minimum number of edges in an $n$-vertex $H$-free graph $G$, with that property. 
Erd\H{o}s, Hajnal, and Moon \cite{EHM} answered this question when $H$ is a complete
graph, and it is known that if $H$ is a graph on $p$ vertices then
$\sat(n,H) \le \sat(n,K_p) = O(n)$, but it is not even known if $\lim_{n
\rightarrow \infty} \frac{\sat(n,H)}{n}$ exists for every graph $H$ (Tuza's
conjecture \cite{TT}). This class of problems is extensively surveyed in
\cite{FFS}.

In this paper, we consider a natural minimization problem which has similar
flavor. We study the problem of minimizing the number of edges (or more
generally, copies of $K_t$) in a graph $G$ with $n = qD+r$ vertices, subject to the
condition that it is \textbf{$\boldsymbol{H}$-covered} by some fixed graph
$H$, i.e., each vertex in $G$ is in a copy of $H$. We completely solve the problem
of determining the unique $K_{D}$-covered graph with any given number of
vertices that minimizes the number of cliques of order $t$. It
turns out that the edge-minimal graph has almost the same structure as in
the conjecture in \cite{GLS}, mentioned at the beginning of this paper. 
This time, the residual graph is not $K_r$, but instead is a union of two copies of 
$K_{D}$ overlapping in $D-r$ vertices, as stated formally below.

\subsection{Main results}
\begin{proposition}\label{pr}
For any positive integers $q,n,t$ with $2 \le t \le n$, and any integer $N = qn + r$ with $0 \le r < n$, the graph consisting of the union of 2 copies of $K_n$ sharing $n-r$ vertices, together with the disjoint union of $q-1$ many $K_n$, has the least number of copies of $K_t$ among all $K_n$-covered graphs on $N$ vertices. Moreover, this is the unique such graph if $n-r \ge t$ or $n-r = 1$.
\end{proposition}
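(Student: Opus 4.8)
The plan is to reduce the problem to a one-dimensional extremal question about how copies of $K_n$ can be stacked to cover the vertex set. Since $G$ is $K_n$-covered, I would build a covering family of $n$-cliques greedily: let $C_1$ be any copy of $K_n$, and having chosen $C_1,\dots,C_{i-1}$ whose union is not all of $V(G)$, pick any uncovered vertex and let $C_i$ be a copy of $K_n$ containing it. This terminates with cliques $C_1,\dots,C_m$ covering $V(G)$. Writing $A_i = C_i \setminus (C_1 \cup \dots \cup C_{i-1})$ for the newly covered vertices and $a_i = |A_i|$, the sets $A_i$ partition $V(G)$, so $\sum_i a_i = N$, while $a_1 = n$ and $1 \le a_i \le n$ for every $i$.

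Next I would prove the counting inequality $k_t(G) \ge \sum_{i=1}^m f(a_i)$, where $k_t(\cdot)$ denotes the number of copies of $K_t$ and $f(a) = \binom{n}{t} - \binom{n-a}{t}$. For each $i$, count the copies of $K_t$ that lie inside the clique $C_i$ and use at least one vertex of $A_i$; since $C_i$ is a clique on $n$ vertices and $C_i \setminus A_i$ has $n-a_i$ of them, there are exactly $\binom{n}{t} - \binom{n-a_i}{t} = f(a_i)$ such copies. The key point is that these families are disjoint across different indices: a $K_t$ counted at step $i$ lies entirely in $C_i$, so all of its vertices are already covered by step $i$ and none lies in any $A_j$ with $j>i$, whence it cannot be counted again later. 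Summing over $i$ gives the inequality (any $K_t$ not of this form only helps).

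It then remains to minimize $\sum_i f(a_i)$ subject to $a_1 = n$, $1 \le a_i \le n$, and $\sum_i a_i = N = qn+r$. The forward difference $f(a) - f(a-1) = \binom{n-a}{t-1}$ is nonincreasing in $a$, so $f$ is concave with $f(0)=0$; consequently $f$ is subadditive and $\sum_i f(a_i)$ is Schur-concave in the profile $(a_i)$. Merging two pieces whose sizes sum to at most $n$, and otherwise rebalancing a pair $(a,b)$ with $a+b>n$ to $(n, a+b-n)$, never increases the sum, so the minimum is attained at the most spread-out admissible profile: $q$ pieces equal to $n$ and a single piece equal to $r$ (when $r>0$). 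This gives $k_t(G) \ge q\binom{n}{t} + f(r) = (q+1)\binom{n}{t} - \binom{n-r}{t}$, which matches the count for the proposed graph.

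For uniqueness I would analyze the equality case, where two things must hold: the profile $(a_i)$ is a minimizer, and every copy of $K_t$ in $G$ is one of the counted cliques (so no $K_t$ spans several $C_i$ or uses an edge outside the cliques). When $n-r \ge t$ the residual piece has size $r \le n-t$, where $f$ is strictly concave, so the minimizing profile $\{n^{(q)}, r\}$ is forced and strict concavity also penalizes using more than $q+1$ cliques; when $n-r=1$ the total deficit $\sum_i (n-a_i)$ equals $1$, which again pins down the profile. In either case I would reconstruct $G$ from the tight profile together with the ``no extra $K_t$'' condition, showing that the last clique meets the rest in exactly a $K_{n-r}$ and that the other cliques are disjoint, recovering the stated graph. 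The main obstacle is exactly this uniqueness step: in the excluded range $2 \le n-r \le t-1$ the function $f$ is flat (equal to $\binom{n}{t}$) for arguments at least $n-t+1$, so one can trade a full clique together with the residual clique for two differently overlapping cliques at no cost, yielding genuinely distinct extremal graphs; the argument must therefore invoke the boundary hypotheses to rule out precisely this flexibility, and extra care is needed because the greedy cover is not canonical.
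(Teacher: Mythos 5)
Your proposal is essentially the paper's own proof: your greedy cover with newly-covered sets $A_i$ is exactly Algorithm \ref{algo}, your function $f(a)=\binom{n}{t}-\binom{n-a}{t}$ is $a_{K_t}^{K_n}(a)$ with the same disjointness-of-counted-copies observation, and your merging/rebalancing (Schur-concavity) steps are precisely the strict inequalities \eqref{conv_type_ineq_1} and \eqref{conv_type_ineq_2} that the paper feeds into the integer program of Theorem \ref{th1}. Your uniqueness discussion --- including identifying the flat region of $f$ as the reason the range $2 \le n-r \le t-1$ is excluded --- tracks the paper's equality analysis and is left at essentially the same level of detail as the paper's own ``routine to verify'' treatment.
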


\begin{remark}
If $n-r < t$, then any graph formed by union (not disjoint union) of $q+1$ many $K_n$'s that spans exactly $qn + r$ vertices will be an extremal graph. 
\end{remark}

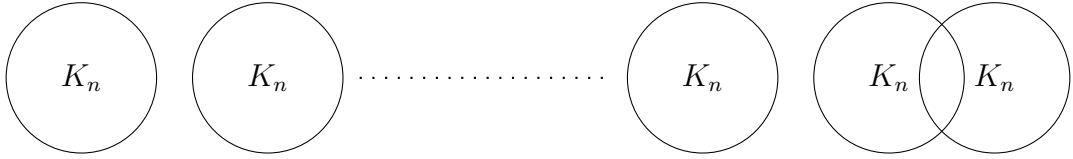
\begin{figure}[h]
\centering
\begin{tikzpicture}
\node[cir](cov){$K_n$};
\node[cir, xshift = 6em](cov1){$K_n$};
\draw [loosely dotted, style = thick] (3.7,0) -- (7,0);
\node[cir, xshift = 20em](cov2){$K_n$};
\node[cir, xshift = 26em](cov3){$K_n$};
\node[cir, xshift = 29.4 em](cov4){$K_n$};
\end{tikzpicture}
\caption{$K_t$-minimal $K_n$-covered graph}\label{diag}
\end{figure}

The first natural question to ask is the following. Are there other graphs $H$ that give rise to the same structure, with many disjoint copies of $H$, and a bounded residual graph? We first show a simple example of a graph which does not. Consider the graph $H$ consisting of the union of $K_n$ and $K_2$ overlapping in one vertex, i.e., $K_n$ with one pendant vertex. A moment's reflection will convince the reader that an $H$-covered graph $G$ is edge-minimal if and only if $G$ consists of a copy of $K_n$, together with $N-n$ more edges which connect the rest of the $N-n$ vertices to the $K_n$ with a single edge each, i.e., $K_n$ with $N-n$ pendant vertices. One can also prove this by using Theorem \ref{th1} which will be stated towards the end of this introduction. In order to generalize Proposition \ref{pr} to other graphs, we need a couple of definitions to describe the residual ``last piece'' (the analogue of the two overlapping copies of $K_n$). 

\begin{definition}\label{defnA}
Suppose $H$ and $T$ are graphs with $A \subseteq V(H)$ where $V(H)$ denotes the vertex set of $H$. Let $a_T^H(A)$ denote the number of subgraphs isomorphic to $T$ in $H$ which have at least one vertex in $A$. Also, let $a_T^H(k)$ denote the minimum value of $a_T^H(A)$ over all k-vertex subsets $A \subseteq V(H)$. For convenience, we define $a_T^H(0) = 0$. When $H$ is understood from the context, we will drop $H$ from the notation. Also, when $T = K_2$, we will drop $T$ from the notation. 
\end{definition}

When generalizing the ``last piece'' of Proposition \ref{pr}, it turns out that there may be several options. Intuitively the last piece is just 2 copies of $H$ overlapping in the most efficient way to minimize the total number of edges. We write $\sqcup$ to denote disjoint union throughout this paper.

\begin{definition}\label{intersect}
Suppose $T$ is a complete graph, $H$ is a graph with $n$ vertices and $N = n + r$ with $0 \le r < n$. Define $\L_{T,N}^H$ to be the collection of graphs $G$ on the vertex set $V \sqcup V_1 \sqcup V_2$ with $|V| = n-r$ and $|V_1| = |V_2| = r$ satisfying the following. There are no edges between $V_1$ and $V_2$. The set $V$ induces a subgraph of $H$ with $n-r$ vertices which has the maximum possible number of subgraphs isomorphic to $T$, and $V_1 \sqcup V$ and $V_2 \sqcup V$ both induce $H$. When $T = K_2$, we will drop $T$ from the notation.
\end{definition}

Note that in Definition \ref{intersect}, the number of subgraphs isomorphic to $T$ in $V$ is $a_T^H(n) - a_T^H(r)$. Hence the number of subgraphs isomorphic to $T$ in any graph in $\L_{T,N}^H$ is $a_T^H(n) +  a_T^H(r)$. Observe that when $H = K_n$, the unique graph in $\L_N^H$ is just the last piece of the graph in Figure \ref{diag}. We now define a notion that intuitively represents any graph which gives rise to similar structures to what the complete graph gave for the edge minimization problem of $H$-covered graphs.

\begin{definition}\label{defn1}
Suppose $H$ is a graph with $n$ vertices and $r$ is an integer with $0 \le r < n$. We call a graph $H$ \textbf{ideal in remainder class} $\boldsymbol{r}$ if for all $N = qn + r$ with $q \in \mathbb{Z}^+$, a graph $G$ is edge-minimal among $N$-vertex $H$-covered graphs if and only if $G$ consists of a graph in $\L_{n+r}^H$, together with the disjoint union of $q-1$ copies of $H$. For any subset $S \subseteq \{0,1, \dots, n-1\}$, we call a graph $H$ \textbf{ideal in remainder class} $\boldsymbol{S}$ if it is ideal in remainder class $r$ for all $r \in S$. We call a graph $H$ \textbf{ideal} if it is ideal in remainder class $\{0, 1, \dots, n-1\}$. 
\end{definition}

Proposition \ref{pr} states that $K_n$ is an ideal graph. There are, however, graphs that are not ideal, as we saw before. As a more general example, we can show that for any graph $H$ whose two lowest-degree vertices have degrees $d_1$ and $d_2$ with $d_1 \ge d_2 + 2$, this type of structure does not minimize the number of edges. For such a graph $H$, any graph in $\L_{n+2}^H$ has at least $e(H) + d_1 + d_2 - 1$ many edges, where $e(H)$ denotes the number of edges in $H$. On the other hand, consider the graph $M$ created by the union of a copy of $H$ and $2$ more vertices which mimic the minimum degree in $H$, i.e., these 2 vertices are adjacent to a vertex $v$ of $H$ if and only if $v$ is adjacent to the vertex with minimum degree $d_1$. It can easily be seen that $M$ is $H$-covered and has $e(H) + 2d_1$ many edges, which is less than the number of edges in graphs in $\L_{n+2}^H$. Hence for $N = qn + 2$ with $q \in \mathbb{N}$, we will have strictly less edges if we use $M$ as the last piece instead of any graph in $\L_{n+2}^H$, so $H$ is not ideal in remainder class $\{2\}$. Also, for a similar reason, if the two lowest-degree vertices have degrees $d_2 = d_1 + 1$, then the structure in Definition \ref{defn1} may not always be the unique graph for our problem. Hence it is quite natural to study regular graphs, and we have the following theorem.

\begin{theorem}\label{th2}
Every $n$-vertex $d$-regular graph with $d \geq \frac{n-1}{2}$ is ideal. This is tight in a sense that there exist non-ideal $d$-regular $n$-vertex graphs when $d$ is the largest even integer less than $\frac{n-1}{2}$. 
\end{theorem}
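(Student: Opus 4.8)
The plan is to reduce the whole theorem to a single clean inequality about the edge-isoperimetric profile of $H$, verify that inequality when $d \ge \frac{n-1}{2}$, and then exhibit its failure at the stated even value of $d$. First I would record the consequences of regularity. Since $H$ is $d$-regular, every vertex of an $H$-covered graph lies in a copy of $H$ and so has degree at least $d$, and $2d \ge n-1$ forces $H$ to have diameter $2$. For a $d$-regular $H$ one has $a^H(r) = rd - m(r)$, where $m(r) = \max_{|A|=r} e_H(A)$ is the largest number of edges spanned by an $r$-set. Writing $\partial A = e_H(A, V(H)\setminus A)$ and $b(r) = \min_{|A|=r}\partial A$ for the edge-isoperimetric profile, the identity $2e_H(A) = rd - \partial A$ gives $m(r) = \tfrac{rd - b(r)}{2}$ and hence
\[ a^H(r) = \frac{rd + b(r)}{2}. \]
Thus $a^H$ is concave if and only if $m$ is convex if and only if $b$ is concave. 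My central claim is that such an $H$ is ideal exactly when $a^H$ is (strictly) concave, and that $d \ge \frac{n-1}{2}$ guarantees this.

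Next I would show concavity of $a^H$ yields idealness. The lower bound comes from a covering argument: order the copies of $H$ used to cover $G$ so that the $i$-th introduces $k_i$ new vertices; its private vertices already span at least $a^H(k_i)$ edges inside that copy, so $e(G) \ge \sum_i a^H(k_i)$ with $\sum_i k_i = N = qn+r$ and $0 \le k_i \le n$. Concavity makes $\sum_i a^H(k_i)$ Schur-concave, so the minimum is attained by filling copies to full size, i.e. $q-1$ terms equal to $n$ together with one remainder $r$, giving $q\,e(H) + a^H(r)$ and matching a graph in $\L_{n+r}^H$ plus $q-1$ disjoint copies. I would make this rigorous by induction on $q$, using $\delta(G)\ge d$ and the connectedness/diameter-$2$ structure to extract one removable copy at each step and to run the equality analysis that pins down the extremal graph up to the choices already permitted inside $\L_{n+r}^H$; strict concavity is what rules out every other configuration, producing the ``if and only if'' of Definition \ref{defn1}.

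The hard part is proving $b$ is concave, that is $2b(r) \ge b(r-1)+b(r+1)$, for every $d$-regular graph with $d \ge \frac{n-1}{2}$. My approach is an exchange argument: fix an optimal $A$ with $|A|=r$ and $\partial A = b(r)$, choose $u \in V(H)\setminus A$ and $w \in A$, and compute $\partial(A\cup\{u\}) + \partial(A\setminus\{w\}) = 2b(r) + 2d - 2e_H(u,A) - 2e_H(w, V(H)\setminus A)$. Since the left side dominates $b(r+1)+b(r-1)$, it suffices to find one pair $(u,w)$ with $e_H(u,A) + e_H(w, V(H)\setminus A) \ge d$. Every $u \notin A$ has $e_H(u,A) \ge d-(n-r-1)$ and every $w \in A$ has $e_H(w,V(H)\setminus A) \ge d-(r-1)$, which already uses $d$ large, but a crude summation only yields $\ge 2d-n+2$. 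The real obstacle is to locate a genuinely good pair, and this is exactly where $d \ge \frac{n-1}{2}$ must be used in full strength: it is what prevents a densest $r$-set from splitting into two nearly independent clusters, the phenomenon responsible for failure below the threshold. I expect to need a global selection of $u$ and $w$, via a defect/matching argument or by arranging nested optimal sets, rather than the averaging above.

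Finally, for tightness I would take $H = K_{d+1} \sqcup K_{d+1}$ with $d$ even, so that $H$ is $d$-regular on $n = 2d+2$ vertices and $d$ is the largest even integer less than $\frac{n-1}{2} = d+\tfrac12$. Here the minimum boundary is achieved by packing a set inside one clique, giving $b(r) = r(d+1-r)$ for $0 \le r \le d+1$ and a second identical bump on $[d+1,\,2d+2]$; thus $b$ vanishes at $r = d+1$ and is not concave. By the equivalence above $a^H$ is not concave, so the associated ``doubled'' last piece—covering the remainder through two depth-$(n-1)$ overlaps, exactly as in the $d_1 \ge d_2+2$ example of the introduction—has strictly fewer edges than every member of $\L_{n+r}^H$ in the relevant remainder class, witnessing that $H$ is not ideal.
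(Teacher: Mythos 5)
Your overall architecture (covering decomposition giving $e(G)\ge\sum_i a^H(k_i)$, then an analysis of which multisets $(k_i)$ minimize, matching the paper's integer program) is sound and parallels the paper. But your central reduction is both unproven and pitched at the wrong strength. You reduce idealness to \emph{(strict) concavity} of $a^H$, i.e.\ concavity of the isoperimetric profile $b$, and then explicitly concede that you cannot prove it: your exchange argument yields only $e_H(u,A)+e_H(w,V(H)\setminus A)\ge 2d-n+2$, which is far below the needed $d$ when $d\approx\frac{n-1}{2}$, and you defer the real work to an unspecified ``global selection'' of the pair $(u,w)$. That missing step is precisely the heart of the theorem. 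Worse, the statement you aim for is stronger than what is true or needed: for $H=K_{n/2,n/2}$ (which is $d$-regular with $d=n/2\ge\frac{n-1}{2}$, hence ideal by the theorem) one has $e(r)=\lfloor r^2/4\rfloor$, so $e(r+1)+e(r-1)=2e(r)$ at every even $r$; thus $a^H$ is \emph{not} strictly concave, and your claimed equivalence ``ideal iff $a^H$ strictly concave'' fails in the necessity direction, while the sufficiency direction is exactly the part you left open. The paper avoids concavity altogether: it proves only the weaker strict \emph{superadditivity} $e(k)+e(l)<e(k+l)$ for $k+l\le n$ (Lemma \ref{ext_le}), by taking an optimal $l$-set $A$ and a uniformly random $k$-set $B\subseteq V(H)\setminus A$ and showing $\mathbb{E}[e(A\cup B)]>e(l)+\binom{k}{2}$ --- this averaging over a random completion is the ``global'' argument you were missing --- and it gets the second inequality $a(k)+a(l)>a(n)+a(r)$ for $k+l=n+r$ by complementation ($e(k)+a(n-k)=a(n)$, Corollary \ref{ext_co}), not from concavity. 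Superadditivity with $a(0)=0$ is enough to solve the integer program and, being strict, to drive the uniqueness analysis (Theorem \ref{thatsit}).

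Your tightness witness is also broken. For $H=K_{d+1}\sqcup K_{d+1}$ the two smallest degrees are \emph{equal}, so the ``$d_1\ge d_2+2$'' mechanism from the introduction does not apply, and your claim that a doubled last piece beats every member of $\L_{n+r}^H$ is false: for $r=2$ one computes $a^H(2)=2d-1$, so graphs in $\L_{n+2}^H$ have $e(H)+2d-1$ edges, strictly fewer than the $e(H)+2d$ of the mimicking construction. More fundamentally, because the two components of your $H$ have the \emph{same} order, every edge-minimal candidate is a disjoint union of copies of $K_{d+1}$ (possibly with one modified block), and such unions regroup into $q-1$ copies of $H$ plus a member of $\L_{n+r}^H$; e.g.\ for $r=d+1$ the unique minimizer $(2q+1)K_{d+1}$ \emph{is} of ideal form since $\L_{2n- (d+1)}^H=\{3K_{d+1}\}$. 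Non-concavity of $b$ does not certify non-idealness, so you have no counterexample. The paper's construction instead takes odd $n=2l+1$, even $d<l$, and $H=H_1\sqcup H_2$ with \emph{connected} $d$-regular components of the two different orders $l$ and $l+1$; then for any remainder $r$ one writes $N=an+r=n+bl$ and exhibits the $d$-regular $H$-covered graph $H_2\oplus(b+1)H_1$ with exactly $\frac{dN}{2}$ edges, which cannot equal any graph in $(a-1)H\oplus\L_{n+r}^H$ because the latter contains at least two disjoint copies of $H$ (hence two copies of $H_2$). The unequal component sizes are essential, and your equal-size example discards exactly that feature.
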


The structures for the edge minimization problem with respect to non-regular graphs $H$ can be very different in nature, and it seems that there is no systematic way to generalize and classify them. But surprisingly, we can characterize the problem for random graphs $H$ quite well. We consider the Erd\H os-R\'enyi random graph model $G_{n,p}$ for constant $p$, where each potential edge among $n$ vertices independently appears with probability $p$. Throughout this paper, we say that event $A_n$ occurs \textbf{with high probability} if the probability of that event ($\mathbb{P}[A_n]$ in notation) tends to one as $n \rightarrow \infty$. Although $G_{n,p}$ is not ideal (in particular $G_{n,p}$ is not ideal in remainder class $\{2\}$ because of the well known fact that with high probability the two lowest degrees differ by much more than $2$), it turns out that with high probability $G_{n,p}$ is ideal in large remainder classes, and we can get another interesting kind of structure for small remainder classes. 
%Let's start by showing that $G_{n,\frac{1}{2}}$ is not ideal in remainder class $\{2\}$ with high probability. Let $H$ be a graph sampled according to the distribution of $G_{n,\frac{1}{2}}$. It's well-known that the difference between min degree $d_1$ and 2nd min degree $d_2$ of $H$ is at least 2 (in fact much more) with high probability. This is for example an easy consequence of Theorem 13 of \cite{BB}. 
Motivated by the structure that appeared when the difference between the two lowest degrees of $H$ was 2, we define a couple of notions to help study the the problem for small remainder classes when $H$ is a random graph. Throughout the paper, we use the standard notation $[n]$ to represent the set $\{1, 2, \cdots, n\}$.

\begin{definition}\label{definition}
Suppose $H$ is a graph with $n$ vertices and $n \leq N < 2n$. Define $\M_N^H$ to be the collection of graphs $G$ satisfying the following: $G$ is a graph on the vertex set $[N]$, where for all $n \le i \le N$, the vertex subset $[n-1] \cup \{i\}$ induces a copy of $H$ and the number of edges in $G$ is the minimum possible. 
\end{definition}

\begin{figure}[h]
\centering 
\begin{tikzpicture}
\draw (0,0) circle (2);
\draw [thick] (3,1) -- (1,0.3);
\draw [thick] (3,1) -- (1,-0.3);
\draw [thick] (3,1) -- (1,0.9);
\draw [thick] (3,1) -- (1,-0.9);
\draw [thick] (3,0) -- (1,0.3);
\draw [thick] (3,0) -- (1,-0.3);
\draw [thick] (3,0) -- (1,0.9);
\draw [thick] (3,0) -- (1,-0.9);
\draw [thick] (3,-1) -- (1,0.3);
\draw [thick] (3,-1) -- (1,-0.3);
\draw [thick] (3,-1) -- (1,0.9);
\draw [thick] (3,-1) -- (1,-0.9);
\node at (-0.5,0) {$H \setminus \{n\}$};
\node [right] at (3,1) {$n$};
\node [right] at (3,0) {$n+1$};
\node [right] at (3,-1) {$n+2$};
\node [left] at (1,0.9) {$u$};
\node [left] at (1,0.3) {$v$};
\node [left] at (1,-0.3) {$w$};
\node [left] at (1,-0.9) {$z$};
\end{tikzpicture}
\caption{Example of a graph in $\M_{n+2}^H$}\label{diagram}
\end{figure}
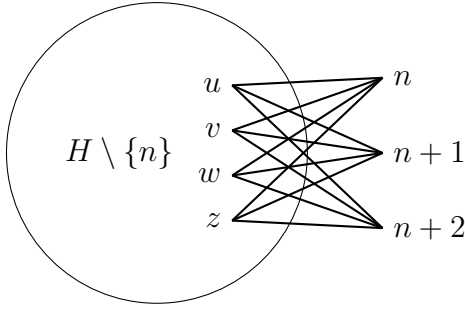

In Definition \ref{definition} of $\M_N^H$, clearly there is no edge in $G$ between $i,j$ for all $n \le i < j \le N$, and for all $n \le i \le N$, the degree of $i$ in $G$ is the minimum degree in $H$. Note that $\M_n^H = \{H\}$. In Figure \ref{diagram}, $H$ is a graph on vertex set $[n]$ ($n \ge 3$) with $n$ being a vertex with minimum degree. We obtained the graph in the figure by adding 2 more vertices $n+1, n+2$ and mimicking the vertex $n$, i.e., for each $u \in [n-1]$, adding edges $(n+1,u)$ and $(n+2,u)$ if and only if there is an edge $(n,u)$. Note that if there is more than one vertex with minimum degree, the extra vertices (e.g., $n+1, n+2$ in Figure \ref{diagram}) can choose to mimic any one of them. On the other hand, if $H$ has a unique vertex with minimum degree, then $\M_N^H$ has only one graph for each $n \le N < 2n$ and the graph will look like the one in Figure \ref{diagram}.  

\begin{definition}\label{defn2}
Suppose $H$ is a graph with $n$ vertices and $r$ be an integer with $0 \le r < n$. We call a graph $H$ \textbf{elementary in remainder class} $\boldsymbol{r}$ if for all $N = qn + r$ with $q \in \mathbb{Z^+}$, a graph $G$ is edge-minimal among $N$-vertex $H$-covered graphs if and only if $G$ is the disjoint union of q many graphs, each from $\M_{b_i}^H$ for some $n \le b_i < 2n$, such that $\sum_{i=1}^q b_i = N$. For any subset $S \subseteq \{0,1, \dots, n-1\}$, we call a graph $H$ \textbf{elementary in remainder class} $\boldsymbol{S}$ if it is elementary in remainder class $r$ for all $r \in S$. We call a graph $H$ \textbf{elementary} if it is elementary in remainder class $\{0, 1, \dots, n-1\}$.%$G \in \oplus_{i=1}^k N_H(b_i)$ for some $n \le b_i < 2n$ for all $i \in [k]$.
\end{definition}

Note that for remainder class $\{0,1\}$, $H$ is elementary if and only if $H$ is ideal, because $\M_{n}^H = \L_{n}^H$ and $\M_{n+1}^H = \L_{n+1}^H$, where $n$ is the number of vertices of $H$.

\begin{theorem}\label{th3}
Let $0 < p < 1$ be a constant. With high probability, $G_{n,p}$ is elementary in remainder class $\{0,1,...,\gamma\}$ and is ideal in remainder class $\{\beta, \beta + 1,..., n-1\}$ for some $\gamma, \beta$ with $\gamma = \Theta(\sqrt{n \log n})$ and $\beta = \gamma + 1$ or $\beta = \gamma + 2$.
\end{theorem}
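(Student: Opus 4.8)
The plan is to reduce Theorem~\ref{th3} to a single scalar comparison and then carry out the random-graph estimation that locates where it flips. Write $\delta$ for the minimum degree of $H=G_{n,p}$, $e(H)$ for its number of edges, and recall from the discussion after Definition~\ref{intersect} that the overlapping ``last piece'' $\L_{n+r}^H$ has $e(H)+a^H(r)$ edges, so the family $\L_{n+r}^H\sqcup(q-1)H$ has exactly $q\,e(H)+a^H(r)$ edges; on the other hand, every graph built from the $\M$-structures of Definition~\ref{definition} has each extra ``mimicking'' vertex contributing exactly $\delta$ edges, so a disjoint union of $\M$-pieces spanning $N=qn+r$ vertices has exactly $q\,e(H)+r\delta$ edges. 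Hence the two candidate extremal families are ordered by the sign of
\[
\Delta(r):=a^H(r)-r\delta .
\]
First I would invoke the general structural characterization of edge-minimal $H$-covered graphs (Theorem~\ref{th1}) to show that, under conditions which I will verify hold with high probability for $G_{n,p}$, every $H$-covered graph on $qn+r$ vertices has at least $q\,e(H)+\min\{r\delta,a^H(r)\}$ edges, with the extremal graphs being exactly the $\M$-family when $\Delta(r)\ge 0$ and exactly the $\L$-family when $\Delta(r)\le 0$. This reduces the theorem to (i) verifying the local conditions of Theorem~\ref{th1} whp, and (ii) locating the sign change of $\Delta(r)$.

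For step (ii) I would estimate the ingredients of $\Delta(r)$ for $G_{n,p}$. Degrees are $\mathrm{Bin}(n-1,p)$, approximately Gaussian with mean $\mu=(n-1)p$ and variance $\sigma^2=(n-1)p(1-p)$; extreme-value estimates give $\delta=\mu-\sigma\sqrt{2\log n}\,(1+o(1))$, and the $k$-th smallest degree $\delta_k\approx\mu-\sigma\sqrt{2\log(n/k)}$. To handle $a^H(r)=\min_{|A|=r}\big(\sum_{v\in A}\deg(v)-e(A)\big)$ I would argue that the minimizer is, up to lower-order terms, the set of the $r$ lowest-degree vertices: $\sum_{v\in A}\deg(v)$ is minimized there, and a union bound over subsets shows that whp no $r$-set with $r=O(\sqrt{n\log n})$ has internal density exceeding $p$ by enough to overturn this. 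Using the near-minimum spacings $\delta_k-\delta_1\approx \frac{\sigma \log k}{\sqrt{2\log n}}$, this yields
\[
\Delta(r)\;\approx\;\sum_{i=2}^{r}(\delta_i-\delta_1)-p\binom{r}{2}\;\approx\;\frac{\sigma}{\sqrt{2\log n}}\,r\log r-\frac{p\,r^2}{2}.
\]
Balancing the two terms gives $r\approx \frac{\sqrt2\,\sigma\log r}{p\sqrt{\log n}}$, and since $\log r\approx\tfrac12\log n$ at this scale, the crossover occurs at $\gamma=\Theta(\sqrt{n\log n})$, as claimed. For $r\le\gamma$ the positive $r\log r$ term dominates, so $\Delta(r)>0$ and $H$ is elementary; past the crossover the $-r^2$ term dominates, so $\Delta(r)<0$ and $H$ is ideal. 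I would make the single sign change rigorous by estimating the discrete increments $\Delta(r+1)-\Delta(r)\approx(\delta_{r+1}-\delta_1)-pr$, which near $\gamma$ are of size $\Theta(\sqrt{n\log n})$ and negative, so the sign flips sharply.

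The main obstacle is the sharpness demanded at the boundary. Because $\Delta$ moves by $\Theta(\sqrt{n\log n})$ per unit step near $\gamma$, classifying \emph{every} remainder class would require controlling $\sum_{i\le r}\delta_i$, the minimizing value of $e(A)$, and $\delta$ all to additive accuracy $o(\sqrt{n\log n})$, uniformly over $r\lesssim\sqrt{n\log n}$; but the fluctuations of the extreme degree order statistics and of the densest small subsets are themselves of exactly this order, so the comparison genuinely cannot be resolved for the single transitional class $r=\gamma+1$ --- this is precisely the source of the stated ambiguity $\beta=\gamma+1$ or $\gamma+2$. Concretely, the technical work splits into (a) simultaneous concentration of all low-order degree statistics (via Gaussian approximation of $\mathrm{Bin}(n-1,p)$ and a careful union bound over $k$), (b) a uniform upper bound on $\max_{|A|=r}e(A)$ across the relevant range of $r$, and (c) verifying that the hypotheses of Theorem~\ref{th1} (local optimality, together with the uniqueness that upgrades the edge comparison to a characterization of all extremal graphs) survive deletion of a few vertices of $H$; step (c) is exactly where the ``two lowest degrees differ by much more than $2$'' phenomenon of $G_{n,p}$, noted in the introduction, forces the $\M$-structures for small remainder classes in the first place.
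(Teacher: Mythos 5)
You correctly identify the decisive threshold quantity: your $\Delta(r)=a^H(r)-r\delta$ is exactly the paper's criterion (its $\beta'$ is the least $k$ with $a(k)\le k\,a(1)$, and $a(1)=\delta=d_1$), and your order-statistics heuristic locating the crossover at $\Theta(\sqrt{n\log n})$ matches the paper's Lemma \ref{beta}, proved via Lemma \ref{gap} and Lemma \ref{sim}. But the central step of your outline is asserted rather than proved. Theorem \ref{th1} has no ``local conditions'' to verify: it is an unconditional reformulation of the problem as the integer program \eqref{IP}, valid for every $H$. Your claim that every $H$-covered graph on $qn+r$ vertices has at least $q\,e(H)+\min\{r\delta,a^H(r)\}$ edges, with the extremal graphs being exactly one of your two families, amounts to claiming that the IP optimum is attained only at the two feasible points $(x_n,x_1)=(q,r)$ and $(x_n,x_r)=(q,1)$. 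Ruling out every other feasible decomposition --- splitting the remainder into several intermediate pieces, or using only $q-1$ full copies together with two pieces $k+l=n+r$ --- is the real content of the paper's proof and requires precisely the inequalities it establishes: $a(k)>k\,a(1)$ for $2\le k\le\gamma$ (inequality \eqref{single}), superadditivity $a(k)+a(l)>a(k+l)$ for $\beta\le k+l\le n$ (Lemma \ref{double}, via the ratio-monotonicity Lemma \ref{helper}), and the wrap-around inequality $a(k)+a(l)>a(n)+a(r)$ for $k+l=n+r$ (Lemma \ref{prob}). Your proposal never formulates these, so the reduction to a two-way comparison is a genuine gap, not a routine verification.

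Moreover, the hardest of these inequalities lies outside the reach of the tools you list. Lemma \ref{prob} is equivalent (by complementation, as in Corollary \ref{ext_co}) to strict superadditivity $e(k)+e(l)<e(k+l)$ for $k+l\le n$, and when $r$ is close to $n$ this concerns sets of size as small as $\log n$, where Gaussian degree statistics and density estimates for sets of size at least $n^{\epsilon}$ say nothing. Your item (b), a uniform \emph{upper} bound on $\max_{|A|=r}e(A)$, cannot yield strict superadditivity: one needs the maximum over $(k+l)$-sets to strictly exceed the sum of two maxima, i.e., matched upper and lower control of $e_n(k)$ at its fluctuation scale. The paper's mechanism --- Talagrand-based two-point concentration of $e_n(k)$ within a window $k^{7/4}$ (Lemma \ref{concentration}), combined with splitting the vertex set into halves, locating optimal dense $k$- and $l$-sets in opposite halves, and joining them with $\approx pkl$ cross edges --- is a separate idea entirely absent from your outline. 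Finally, a conceptual correction: the ambiguity $\beta\in\{\gamma+1,\gamma+2\}$ is not caused by unresolvable fluctuations at the boundary; given the realized $H$ it is the deterministic possibility of an exact tie $a(\gamma+1)=(\gamma+1)a(1)$, in which case both families are simultaneously extremal at $r=\gamma+1$, so by the ``if and only if'' in Definitions \ref{defn1} and \ref{defn2} the graph is neither elementary nor ideal in that single class. Note that your dichotomy (``exactly the $\M$-family when $\Delta(r)\ge 0$ and exactly the $\L$-family when $\Delta(r)\le 0$'') is self-contradictory at $\Delta(r)=0$, which is exactly the case the paper's definition of $\beta$ is designed to accommodate.
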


\begin{remark}
Since $\gamma < \beta \le \gamma + 2$, with high probability there exists at most one $r = \Theta(\sqrt{n \log n})$ such that $G_{n,p}$ is neither elementary nor ideal in remainder class $r$. Furthermore it turns out that with high probability, if such $r$ exists, then for the edge minimization problem of $G_{n,p}$-covered graphs, both the graph structures in Definition \ref{defn1} and Definition \ref{defn2} work, and those are the only graphs as well. We will also give the condition on when such $r$ exists, but we postpone the detailed analysis until the proof of Theorem \ref{th3} in Section \ref{theorem3}.
\end{remark}

Consider the more general problem of minimizing the number of cliques of an arbitrary fixed order instead of edges ($K_2$) among $H$-covered graphs with a given number of vertices. We will formulate this general problem in terms of an integer program which will be useful to prove Theorem \ref{th2} and Theorem \ref{th3}.

\begin{theorem}\label{th1}
Fix positive integers $N$ and $t$, and a graph $H$ with $n$ vertices. The minimum number of subgraphs isomorphic to $K = K_t$ among all $H$-covered graphs with $N$ vertices, denoted by $\cov_{K_t}(N,H)$, is equal to the solution to the following integer program:

\begin{align} \label{IP}
&\text{minimize} \displaystyle \sum_{k \in [n]} a_K^H(k) x_k, \nonumber\\ 
&\text{subject to} 
\begin{cases}
\displaystyle \sum_{k \in [n]} k x_k = N \\
x_k \in \mathbb{Z} &\text{for all	} k \in [n] \\
x_k \ge 0 &\text{for all	} k \in [n-1] \\
x_n \ge 1. 
\end{cases}
\end{align}
\end{theorem}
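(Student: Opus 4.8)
The plan is to prove the two inequalities $\cov_{K_t}(N,H) \le \mathrm{OPT}$ and $\cov_{K_t}(N,H) \ge \mathrm{OPT}$, where $\mathrm{OPT}$ denotes the optimum of the integer program \eqref{IP}. (When $N < n$ both the combinatorial problem and \eqref{IP} are infeasible, since $x_n \ge 1$ together with nonnegativity forces $\sum_k k x_k \ge n$, so I assume $N \ge n$.)

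For the upper bound, which is the easier direction, I would start from an optimal integral solution $(x_k)_{k \in [n]}$ and build an explicit $H$-covered graph $G$ on $N$ vertices with exactly $\sum_k a_K^H(k)\, x_k$ copies of $K_t$. Lay down one base copy $H_1$ of $H$ (using the guaranteed $x_n \ge 1$), which contributes $a_K^H(n)$ copies of $K_t$; this equals the total number of $K_t$'s in $H$, since the only $n$-subset of $V(H)$ is all of $V(H)$. For each remaining ``piece'' of size $k$, pick a $k$-subset $A \subseteq V(H)$ attaining the minimum $a_K^H(k)$, introduce $k$ fresh vertices playing the role of $A$, and glue them onto the copy of $V(H) \setminus A$ sitting inside $H_1$ (type-$n$ pieces other than the base are attached disjointly). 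Because the fresh vertices are adjacent only to this shared part and to one another, the induced subgraph on the new piece is exactly a copy of $H$, so $G$ stays $H$-covered, and every newly created $K_t$ lies inside that copy and meets $A$. Hence each size-$k$ piece adds precisely $a_K^H(k)$ new copies of $K_t$, and summing gives the claimed count, while the vertex total is $\sum_k k x_k = N$.

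For the lower bound, which is the crux, take an arbitrary $H$-covered graph $G$ on $N$ vertices and greedily cover its vertices by copies of $H$: while uncovered vertices remain, pick such a vertex $v$, use the copy of $H$ through $v$ guaranteed by the covering hypothesis as the next subgraph $H_i$, and let $S_i$ be its set of not-previously-covered vertices, of size $k_i \ge 1$. The sets $S_i$ partition $V(G)$, so $\sum_i k_i = N$, and the first step has $k_1 = n$; hence setting $x_k := |\{i : k_i = k\}|$ yields a feasible point of \eqref{IP}, giving $\sum_i a_K^H(k_i) = \sum_k a_K^H(k)\, x_k \ge \mathrm{OPT}$. It then remains to show $G$ has at least $\sum_i a_K^H(k_i)$ copies of $K_t$. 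I would charge each copy $Q$ of $K_t$ in $G$ to the largest index $i$ with $Q \cap S_i \ne \emptyset$. Every copy of $K_t$ lying inside $V(H_i)$ and meeting $S_i$ is charged to $i$, since it cannot meet any later $S_j$, so the families charged to distinct indices are disjoint. Finally, the number of $K_t$'s inside $V(H_i)$ meeting $S_i$ is at least the number inside the embedded subgraph $H_i \cong H$ meeting $S_i$, because $G[V(H_i)] \supseteq H_i$ can only have more edges and hence more cliques; and viewing $S_i$ as a $k_i$-subset of $V(H) \cong V(H_i)$, this is at least $a_K^H(k_i)$ by the definition of $a_K^H(k_i)$ as a minimum over $k_i$-subsets. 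Summing over $i$ completes the bound.

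The main obstacle I anticipate is the bookkeeping in the lower bound: ensuring the charging assigns each $K_t$ to exactly one index and that no clique counted for $H_i$ is also counted for some $H_j$. The two points that make this work are, first, charging to the maximal touched $S_i$, which confines the relevant cliques to $U_i := S_1 \cup \dots \cup S_i$ and thereby forces disjointness of the counted families; and second, the monotonicity observation that passing from the embedded subgraph $H_i$ to the induced subgraph $G[V(H_i)]$ only increases the clique count, so the per-step contribution is genuinely bounded below by $a_K^H(k_i)$ rather than overstated. Everything else, namely feasibility of the constructed IP point and the vertex-count identities, is routine.
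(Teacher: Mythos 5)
Your proposal is correct and follows essentially the same route as the paper: the upper bound via gluing minimizing $k$-subsets onto a single base copy of $H$ (exactly the paper's construction through $\L_{K,n+k}^H$, since minimizing $a_K^H(A)$ over $k$-sets is equivalent to maximizing the $K_t$-count on the complementary $(n-k)$-set), and the lower bound via the same greedy covering that yields a feasible point $k_1 = n$, $\sum_i k_i = N$ of the integer program (the paper's Algorithm \ref{algo}). Your only deviation is cosmetic but welcome: you certify the per-step count $a_K^H(k_i)$ by charging each clique to the largest index $i$ with $Q \cap S_i \neq \emptyset$, a vertex-based argument that is actually tidier than the paper's edge-deletion bookkeeping with $E_i$ (which, read literally, could discard cliques meeting new vertices through previously deleted old--old edges, an issue your charging avoids).
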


Now we state a corollary of Theorem \ref{th1}, which can be related to the corresponding graph saturation problem (Tuza's Conjecture \cite{TT}), which as mentioned in the beginning, has remained open for a long time.   

\begin{corollary}\label{sat}
For every graph $H$ and every positive integer $t$, $\lim_{N \rightarrow \infty} \frac{\cov_{K_t}(N,H)}{N}$ exists.
\end{corollary}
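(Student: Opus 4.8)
The plan is to use Theorem \ref{th1} to reduce the statement to an asymptotic question about the value function of the integer program \eqref{IP}. Writing $f(N) := \cov_{K_t}(N,H)$, Theorem \ref{th1} identifies $f(N)$ with the minimum of $\sum_{k\in[n]} a_K^H(k)\,x_k$ over the feasible integer vectors $(x_k)$, so it suffices to show that $f(N)/N$ converges. I claim the limit equals the minimum cost-per-weight ratio
\[
\rho := \min_{k \in [n]} \frac{a_K^H(k)}{k},
\]
which is a fixed nonnegative constant depending only on $H$ and $t$.

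For the lower bound, note that $a_K^H(k) \ge \rho\,k$ for every $k \in [n]$ by the definition of $\rho$. Since any feasible solution has $x_k \ge 0$ for all $k$ (as $x_k \ge 0$ on $[n-1]$ and $x_n \ge 1$), multiplying through by $x_k$ and summing gives $\sum_k a_K^H(k) x_k \ge \rho \sum_k k x_k = \rho N$, where the last equality is the weight constraint $\sum_k k x_k = N$. Hence $f(N) \ge \rho N$ for every $N \ge n$ (the range in which the program is feasible), and so $f(N)/N \ge \rho$.

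For the upper bound I would exhibit an explicit feasible solution whose cost exceeds $\rho N$ by only a bounded amount. Let $k^\ast$ attain the minimum defining $\rho$. Reserve one copy of the weight-$n$ item to satisfy $x_n \ge 1$, write $N - n = q k^\ast + s$ with $0 \le s < k^\ast$, take $q$ copies of the weight-$k^\ast$ item, and use a single weight-$s$ item (when $s \ge 1$) to absorb the remainder; all multiplicities are nonnegative integers and the total weight is exactly $N$. The cost of this solution is $a_K^H(n) + q\,a_K^H(k^\ast) + a_K^H(s)$, and since $q\,a_K^H(k^\ast) = \rho\, q k^\ast = \rho(N - n - s) \le \rho N$, the cost is at most $\rho N + C$ with $C := a_K^H(n) + \max_{0 \le s \le n-1} a_K^H(s)$ independent of $N$. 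Therefore $f(N)/N \le \rho + C/N$.

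Combining the two bounds yields $\rho \le f(N)/N \le \rho + C/N$, so $f(N)/N \to \rho$ by the squeeze theorem, proving that the limit exists (and equals $\rho$). The only delicate point is the exact-weight constraint $\sum_k k x_k = N$: one cannot simply use roughly $N/k^\ast$ copies of the cheapest-per-weight item, since the total must hit $N$ on the nose and include at least one full copy of $H$. This is exactly what produces the bounded correction term, and it is resolved cleanly because every weight in $\{1,\dots,n\}$ is available as an item, so any residue $s < k^\ast \le n$ can be filled by a single weight-$s$ item at bounded cost (or, alternatively, by $s$ unit-weight items). I expect no serious obstacle beyond this bookkeeping.
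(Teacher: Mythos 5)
Your proof is correct and follows essentially the same route as the paper: both establish $\cov_{K_t}(N,H)/N \to \min_{k\in[n]} a_{K_t}^H(k)/k$ by sandwiching the integer program's value between $\rho N$ and $\rho N + O(1)$ using the minimum cost-per-weight ratio. You simply spell out the residue bookkeeping (the reserved weight-$n$ item and the single weight-$s$ remainder, using $a_{K_t}^H(0)=0$) that the paper's one-line inequality $cN \le \cov_{K_t}(N,H) \le a_{K_t}^H(n) + cN$ leaves implicit.
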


We will use the following standard graph theoretic notation throughout this paper. For a graph $G$ and $v \in V(G)$, $N_G(v)$ denotes the set of neighbors of $v$ in $G$ and $d_G(v) = |N_G(v)|$, i.e., $d_G(v)$ is the degree of $v$. We write $\delta(G)$ to denote the minimum degree of $G$. This paper is organized as follows. In Section \ref{sec2}, we prove Proposition \ref{pr} and Theorem \ref{th1}. In Section \ref{sec3}, we prove Theorem \ref{th2} and construct graphs to show its tightness. In Section \ref{sec4}, we discuss some facts and lemmas about random graphs as a preparation to prove Theorem \ref{th3}. Finally, we prove it in Section \ref{sec5} and end with some concluding remarks in Section \ref{sec6}.

%\item A graph is called $H$-covered graph if each vertex of the graph is in a subgraph isomorphic to $H$, i.e. if $H$ has $n$ vertices then $G$ is called $H$-covered if for all $v \in V(G)$ there exist $v_1,v_2,...,v_{n-1} \in V(G) \setminus \{v\}$ such that $H$ is a subgraph of the induced subgraph on $\{v,v_1,v_2,...,v_{n-1}\}$. Note that a $H$-covered graph always has as many vertices as $H$ has. Proof of Proposition \ref{pr} and Theorem \ref{th1} \textcolor{red}{Or  $K_t$ for general $t$
\section{Minimizing the number of cliques of a fixed order}\label{sec2}
We start with a well-known result.
\begin{lemma}\label{conv}
Suppose $n$, $k$, $r$, and $m$ are non-negative integers with $r < n$ and $m > k$. Over all choices of values $x_i \in \{0,1,2,...,n\}$ for each $i \in [m]$ such that $\sum_{i=1}^m x_i = kn + r$, the expression $\sum_{i=1}^t \binom{x_i}{2}$ achieves its maximum if and only if $x_i = n$ for $k$ many values of $i \in [m]$, $x_i = r$ for one $i$, and $x_i = 0$ for the rest of the values of $i$.
\end{lemma}
\begin{proof}
The function $f : \mathbb{Z} \rightarrow \mathbb{R}$ defined by $f(x) = \frac{x(x-1)}{2}$ is strictly convex, so $f(a) + f(b) < f(a-1) + f(b+1)$ whenever $a < b$. Hence, if the values of $x_i$ are not as in Lemma \ref{conv}, we can increase the value of $\sum_{i=1}^m \binom{x_i}{2}$ by changing 2 of $x_i$ while keeping $\sum_{i=1}^m x_i$ constant, which is a contradiction.
\end{proof}

\begin{proof}[\textbf{Warm up for Proposition \ref{pr} when $\boldsymbol{t=2}$}]
Fix $q$, $n$ and $r$ with $r < n$, and let $N = qn + r$. Let $F$ denote the graph which consists of the union of 2 copies of $K_n$ overlapping in $n-r$ vertices, together with the disjoint union of $q-1$ many $K_n$. First note that the number of edges in the graph $F$ is $(q+1) \binom{n}{2} - \binom{n-r}{2}$. Now our goal is to show that any $N$-vertex $K_n$-covered graph has at least $(q+1) \binom{n}{2} - \binom{n-r}{2}$ many edges.

Consider a $K_n$-covered graph $G$ with $N$ vertices. Since $v$ is in a copy of $K_n$ for all $v \in V(G)$, choose a set $S(v)$ of $n$ vertices of $G$ such that $v \in S(v)$ and $S(v)$ induces $K_n$. Now in order to lower-bound the number of edges in $G$, do the following:

\begin{algorithm} \label{alg}\ \smallskip 											
\begin{enumerate}
\item Set $V_1 = V(G)$, $E_1 = E(G)$.
\item In the $i$-th step, pick an arbitrary vertex $v_i \in V_i$. Consider the subgraph $A(v_i) \cong K_n$ induced by $S(v_i)$, Now set $V_{i+1} = V_i \setminus S(v_i)$ and $E_{i+1} = E_i \setminus E(A(v_i))$. Repeat the procedure until $V_{i+1} = \emptyset$. 
\end{enumerate}
\end{algorithm}

Note that in each step $i$, $E_i \supseteq E(G) \setminus E(V(G) \setminus V_i)$. So, the number of edges in $A(v_i) \cap E_i$ is at least $\binom{n}{2} - \binom{k_i}{2}$, where $k_i = |(V(G) \setminus V_i) \cap S(v_i)|$. Let $m$ be the number of iterations, i.e. $V_{m+1} = \emptyset$. Observe that $q \le m \le N$. Clearly $N = \sum_{i=1}^m (n - k_i)$. and the number of edges in $G$ is at least $\sum_{i=1}^m \big(\binom{n}{2} - \binom{k_i}{2}\big)$. So, the only remaining thing is to determine the maximum value of $\sum_{i=1}^m \binom{k_i}{2}$ when $k_i \leq n$ for all $i \in [m]$ and $$\sum_{i = 0}^m k_i = mn - N = (m - q - 1)n + (n - r).$$ 
By using Lemma \ref{conv}, $\sum_{i=1}^m \binom{k_i}{2}$ attains its maximum if and only if $k_i = n$ for $m-q-1$ many values of $i$, $k_i = n-r$ for some other $i$, and $k_i = 0$ for the rest of the values of $i$. Hence, the number of edges in $G$ is at least $(q+1) \binom{n}{2} - \binom{n-r}{2}$, proving the required bound. Now in order to prove the uniqueness of the extremal graph $F$, let us have a closer look at Algorithm \ref{alg}. As we have already noticed that $F$ has $(q+1) \binom{n}{2} - \binom{n-r}{2}$ many edges, if we assume that $G$ has the minimum number of edges among $N$-vertex $K_n$-covered graphs, then $E_{m+1} = \emptyset$. By Lemma \ref{conv}, the values of $k_i$'s are unique up to permutation, and they are exactly same as what we have seen in the beginning of this paragraph. Hence, it is straightforward to see that $F$ is the only graph achieving the minimum number of edges among $N$-vertex $K_n$-covered graphs.
\end{proof}

\begin{proof}[\textbf{Proof of Theorem \ref{th1}}]
Let the minimum value of the integer program \eqref{IP} be $z$. We first show that there is an $N$-vertex $H$-covered graph with $z$ many copies of $K = K_t$. Let  $x_k$ be non-negative integer for all $k \in [n]$ such that $x_n \geq 1$ and $\sum_{k=1}^n k x_k = N$. We will construct an $N$-vertex $H$-covered graph with the number of copies of $K$ being $\sum_{k \in [n]} a_K(k) x_k$, where $a_K$ is a short hand for $a_K^H$.

Let $y_k = x_k$ for $k < n$ and let $y_n = x_n - 1$. Consider a graph $G$ on the vertex set $V \sqcup (\sqcup_{k=1}^n V_k)$, where $V_k = \sqcup_{i=1}^{y_k} V_{i,k}$, $V$ has $n$ vertices and each $V_{i,k}$ has $k$ vertices. There are no edges between $V_{i,k}$ and $V_{j,l}$ in $G$ when $(i,k) \neq (j,l)$. $V$ induces $H$ and $V \sqcup V_{i,k}$ induces a graph in $\L_{K,n+k}^H$. From the discussion after Definition \ref{intersect}, remember that the number of subgraphs isomorphic to $K$ in $\L_{K,n+k}^H$ is $a_K(n) + a_K(k)$, hence the number of copies of $K$ in $G$ is $\sum_{k \in [n]} a_K(k) x_k$.

At this point, to finish the proof of Theorem \ref{th1}, it will be enough to show that the number of copies of $K$ of an $N$-vertex $H$-covered graph is at least $z$. We will do a similar treatment as it was done in the proof of Proposition \ref{pr} where $H$ was a complete graph and $K$ was $K_2$. Consider an $N$-vertex $H$-covered graph $G$. So, $v$ is in a copy of $H$ for all $v \in V(G)$, choose $S(v)$ a set of $n$ vertices of $G$ such that $v \in S(v)$ and $H$ is a subgraph of the induced subgraph on $S(v)$. Now we follow the following steps to lower-bound the number of copies of $K$ in $G$, which is similar to Algorithm \ref{alg} in the proof of Proposition \ref{pr} for $t = 2$.

\begin{algorithm}\label{algo}\ \smallskip
\begin{enumerate}
\item $V_1 = V(G)$, $E_1 = E(G)$.
\item In the $i$-th step, pick an arbitrary vertex $v_i \in V_i$. Consider the subgraph $A(v_i) \supseteq H$ induced by $S(v_i)$. Now set $V_{i+1} = V_i \setminus S(v_i)$ and $E_{i+1} = E_i \setminus E(A(v_i))$. Repeat the procedure if $V_{i+1} \neq \emptyset$.
\end{enumerate}
\end{algorithm}

By using Definition \ref{defnA}, the number of subgraphs isomorphic to $K$ in $A(v_i) \cap E_i$ is at least $a_K(k_i)$, where $k_i = |V_i \cap S(v_i)|$. Note that $k_1 = n$. Let $m$ be the number of iterations, i.e. $V_{m+1} = \emptyset$. Note that $q \le m \le N$. Clearly $\sum_{i=1}^m k_i = N$ and the number of subgraphs isomorphic to $K$ in $G$ is at least $\sum_{i=1}^m a_K(k_i)$. Hence, the number of subgraphs isomorphic to $K$ in $G$ is at least the minimum value of the integer program \eqref{IP}.
\end{proof}

\begin{proof}[\textbf{Proof of Corollary \ref{sat}}]
Let $c = \min_{k \in [n]} \frac{a_{K_t}^H(k)}{k}$. From the integer program \eqref{IP}, one can observe that $$cN \le \cov_{K_t}(N,H) \le a_{K_t}^H(n) + cN,$$
which finishes the proof of Corollary \ref{sat}.
\end{proof}
%Let $k = \alpha \in [n]$ minimizes $\frac{a_K^H(k)}{k}$ and let $\frac{a_K^H(\alpha)}{\alpha} = c$. 
%We generalize the Proposition \ref{pr} as following. The same graph $G$ in Proposition \ref{pr} minimizes the number of copies of $K_t$ among $N$-vertex $K_n$-covered graphs. 
%The graph $G = aK_n \cup M_{K_n}(b)$ has the least number of subgraphs isomorphic to $K_t$ for any $t \in \mathbb{N}$ among the graphs in ${K_n}^m$, where $m = an + b$ and $n < b \leq 2n$. Moreover $G$ is unique.

\begin{proof}[\textbf{Proof of Proposition \ref{pr}}]
Note that $a_{K_t}^{K_n}(k) = \binom{n}{t} - \binom{n-k}{t}$. So, by using the fact that $\binom{x}{t} + \binom{y}{t} > \binom{z}{t} + \binom{w}{t}$ for all $x,y,z,w \in \mathbb{N}$ such that $x + y = z + w$, $x > \max(z,w)$, and $x \ge t$, we get the following 2 equations for $0 < k, l < n$.
\begin{equation}
a_{K_t}^{K_n}(k) + a_{K_t}^{K_n}(l) > a_{K_t}^{K_n}(k + l), \text{          for          } k + l \le n \text{ and } n \ge t \label{conv_type_ineq_1} 
\end{equation}
\begin{equation} 
a_{K_t}^{K_n}(k) + a_{K_t}^{K_n}(l) > a_{K_t}^{K_n}(n) + a_{K_t}^{K_n}(b), \text{          for          } n < k + l = n + b < 2n \text{ and } n-b \ge t \label{conv_type_ineq_2}
\end{equation}

Hence, if $r \neq 0$, then the integer program \eqref{IP} in Theorem \ref{th1} has one unique solution: $x_n = q$, $x_r = 1$, and $x_k = 0$ for other values of $k \in [n]$. For $r=0$, the unique solution is just $x_n = q$ and $x_k = 0$ for $k \neq n$. Now for an $N$-vertex $K_n$-covered graph $G$, by analyzing the steps of Algorithm \ref{algo} similarly to the warm up proof of Proposition \ref{pr} for $t=2$, we can conclude that $G$ minimizes the number of subgraphs isomorphic to $K_t$ among $N$-vertex $K_n$-covered graphs if and only if $G$ is the extremal graph in Proposition \ref{pr}.
\end{proof}

\section{Regular graphs}\label{sec3}
\subsection{Proof of Theorem \ref{th2}}
Throughout this section, let $H$ be a fixed graph on $n$ vertices so that we may write $a(k)$ instead of $a^H(k)$. We first warm up by proving the following interesting result. 

\begin{proposition} \label{extraadded}
For any positive integers $q,n,N = qn$ and any $n$-vertex graph $H$ with $\delta(H) \ge \frac{n-1}{2}$, $G$ is the edge-minimal $N$-vertex $H$-covered graph if and only if $G$ is the disjoint union of $q$ many copies of $H$. 
\end{proposition}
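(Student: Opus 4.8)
The plan is to apply Theorem \ref{th1} with $t=2$, which turns the problem into the integer program \eqref{IP}, and then to show that for $N=qn$ the unique optimum is $x_n=q$ with all other $x_k=0$. Writing $a(k)$ for $a_{K_2}^H(k)$ as in Definition \ref{defnA}, I would first record the complementary description
\[
a(k) = e(H) - \max_{|B| = n-k} e(H[B]),
\]
since the edges meeting a $k$-set $A$ are exactly all of $E(H)$ minus the edges lying entirely in the complement of $A$. In particular $a(n)=e(H)$, and the disjoint union of $q$ copies of $H$ is an $H$-covered graph realizing the value $q\,e(H)$; the content is that nothing does better.

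The heart of the argument is the strict density estimate $e(H[B]) < \tfrac{|B|}{n}e(H)$ for every $B\subseteq V(H)$ with $0<|B|<n$, and this is where the hypothesis $\delta(H)\ge\frac{n-1}{2}$ is used. To prove it I would set $j=|B|$, $A=V(H)\setminus B$, and split $e(H)=e(H[B])+e(A,B)+e(H[A])$, where $e(A,B)$ counts edges between $A$ and $B$; the target $n\,e(H[B])<j\,e(H)$ then rearranges to $(n-j)\,e(H[B]) < j\big(e(A,B)+e(H[A])\big)$. For the right-hand side I would use $e(A,B)+e(H[A]) = \sum_{v\in A} d_H(v) - e(H[A]) \ge (n-j)\tfrac{n-1}{2} - \binom{n-j}{2} = \tfrac{j(n-j)}{2}$, where the inequality is exactly the minimum-degree bound together with $e(H[A])\le\binom{n-j}{2}$. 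For the left-hand side, $(n-j)\,e(H[B]) \le (n-j)\binom{j}{2} = \tfrac{j(j-1)(n-j)}{2}$. Since $j-1<j$ and $j(n-j)>0$, these combine into a strict inequality. Feeding this into the displayed formula for $a$ yields $\frac{a(k)}{k} > \frac{a(n)}{n}$ for all $0<k<n$, with equality at $k=n$.

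Granting this, the program is resolved by an LP-type weighting: for any feasible $(x_k)$,
\[
\sum_{k\in[n]} a(k)\,x_k = \sum_{k\in[n]} \frac{a(k)}{k}\,(k\,x_k) \ge \frac{a(n)}{n}\sum_{k\in[n]} k\,x_k = \frac{a(n)}{n}\cdot qn = q\,e(H),
\]
and equality forces $x_k=0$ for every $k<n$ by the strict inequality just established, whence $x_n=q$. To upgrade this from uniqueness of the optimal $(x_k)$ to uniqueness of the extremal graph, I would re-run the peeling procedure of Algorithm \ref{algo} as in the proof of Theorem \ref{th1}: for an edge-minimal $G$ every peeled quantity $k_i$ must equal $n$, so the chosen copies $S(v_i)$ are pairwise disjoint, partition $V(G)$, carry no edges between them, and each induces exactly $H$; hence $G$ is the disjoint union of $q$ copies of $H$.

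The only real obstacle is the strict density estimate; the rest is bookkeeping already present in the earlier proofs. I expect that estimate to be short once one compares $e(H[B])$ to the global density through the complementary set $A$, and I would emphasize that $\delta(H)\ge\frac{n-1}{2}$ is precisely the threshold making $\sum_{v\in A} d_H(v) - e(H[A]) \ge \frac{j(n-j)}{2}$ hold, which is consistent with the tightness of the regularity threshold asserted in Theorem \ref{th2}.
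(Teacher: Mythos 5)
Your proposal is correct and takes essentially the same route as the paper: it reduces the problem to the integer program of Theorem \ref{th1} and shows that $\frac{a(k)}{k}$ is strictly minimized only at $k=n$, which is precisely the paper's Lemma \ref{simple}; your density estimate $e(H[B]) < \frac{|B|}{n}\,e(H)$ is just the complement reformulation of that lemma, proved from the same two ingredients (the minimum-degree sum bound, as in \eqref{eqn1}, and the trivial bound $e(H[A])\le\binom{|A|}{2}$, as in \eqref{eqn4}), only packaged without the paper's case split at $k=\frac{n}{2}$. The concluding LP-weighting and the uniqueness upgrade via the peeling Algorithm \ref{algo} match what the paper means by ``routinely follows.''
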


\begin{remark}
In contrast with Theorem \ref{th2}, Proposition \ref{extraadded} relaxes the regularity condition on $H$, but imposes a divisibility condition on $N$ so that $n$ divides $N$.
\end{remark}

Proposition \ref{extraadded} routinely follows from Theorem \ref{th1} and Lemma \ref{simple} below. 

\begin{lemma}\label{simple}
If $\delta(H) \ge \frac{n-1}{2}$, then $\frac{a(k)}{k}$ is minimized if and only if $k = n$.
\end{lemma}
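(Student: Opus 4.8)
The plan is to reformulate $a(k)$ in terms of the densest induced subgraph and thereby reduce the lemma to a single clean inequality. Writing $e(S)$ for the number of edges induced on a vertex set $S$, and recalling that $a(A) = a^H(A)$ counts the edges with at least one endpoint in $A$, we have $a(A) = e(H) - e(V(H)\setminus A)$. Minimizing over $|A| = k$ is thus the same as maximizing $e(B)$ over $(n-k)$-subsets $B = V(H)\setminus A$, so
\[
a(k) = e(H) - \max_{|B| = n-k} e(B).
\]
Since $a(n) = e(H)$, the statement ``$a(k)/k$ is minimized if and only if $k=n$'' is equivalent to $\frac{a(k)}{k} > \frac{e(H)}{n}$ for every $1 \le k \le n-1$, and multiplying out and substituting $m = n-k$ turns this into the assertion that every proper nonempty vertex subset is strictly sparser than $H$ itself:
\[
e(B) < \frac{m}{n}\,e(H) \qquad \text{for every } B \subseteq V(H) \text{ with } 1 \le |B| = m \le n-1.
\]

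The crux is to bound $e(H) - e(B)$, the number of edges meeting $\bar B = V(H)\setminus B$, from below by applying the minimum degree condition $\delta := \delta(H) \ge \frac{n-1}{2}$ twice. First, since $\sum_{u\in\bar B}d_H(u)\ge (n-m)\delta$ and (edges meeting $\bar B$) $= \tfrac12\big(\sum_{u\in\bar B}d_H(u) + \partial\big)$, where $\partial$ is the number of edges between $B$ and $\bar B$, we obtain a lower bound that still involves $\partial$. Second, because each $u \in \bar B$ has at most $(n-m)-1$ neighbours inside $\bar B$, it sends at least $\delta - (n-m-1)$ edges across to $B$, whence $\partial \ge (n-m)\big(\delta - n + m + 1\big)$. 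Feeding the second estimate into the first gives the key inequality
\[
e(H) - e(B) \ \ge\ \frac{(n-m)}{2}\big(2\delta - n + m + 1\big).
\]
Granting this, the lemma follows by comparing against the trivial bound $e(B) \le \binom{m}{2}$: the target $(n-m)\,e(B) < m\big(e(H) - e(B)\big)$ reduces, after cancelling the common factor $n-m > 0$, to $\binom{m}{2} < \frac{m}{2}\big(2\delta - n + m + 1\big)$, that is, to $2\delta > n - 2$, which holds since $\delta \ge \frac{n-1}{2}$. The argument is uniform in $m$ and yields a strict inequality for every proper subset, so it establishes simultaneously that $k=n$ is a minimizer and that it is the unique one.

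I expect the main obstacle to be precisely the \emph{coupling} of the two minimum-degree estimates. Each one in isolation is too weak: bounding $e(B) \le \binom{m}{2}$ together with only $\sum_{u \in \bar B} d_H(u) \ge (n-m)\delta$ succeeds just for small $m$ (roughly $m \le \delta$), while the crossing-edge bound $\partial \ge (n-m)(\delta-n+m+1)$ alone controls only $m$ very close to $n$; a dense middle-range set $B$ — say a clique of size about $n/2$ — slips through either bound separately. The content of the proof is that these two regimes are reconciled exactly when both applications of $\delta \ge \frac{n-1}{2}$ are added together, at which stage the $m$-dependence cancels and leaves the single threshold condition $2\delta > n-2$. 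Checking that this additive combination is legitimate for all $m$ at once, rather than patching together cases, is the step that requires the most care; once it is in place, the rest is routine and gives Proposition~\ref{extraadded} via Theorem~\ref{th1} as claimed.
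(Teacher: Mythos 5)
Your proof is correct, and it even yields the slightly stronger intermediate bound $a(k) \ge \frac{k}{2}(2\delta - k + 1)$ for general $\delta$. The raw ingredients coincide with the paper's: both proofs rest on the identity $a(A) = \frac{1}{2}\bigl(\sum_{v \in A} d_H(v) + \partial\bigr)$ (with $\partial$ the number of crossing edges), the two minimum-degree estimates, and the fact that the complement of a $k$-set spans at most $\binom{n-k}{2}$ edges --- your $e(B) \le \binom{m}{2}$ is exactly the paper's inequality \eqref{eqn4} read in complement form, and your key inequality specializes at $\delta = \frac{n-1}{2}$ to the paper's \eqref{eqn3}, namely $a(k) \ge k \cdot \frac{n-k}{2}$. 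Where you genuinely diverge is in the organization. The paper splits into the cases $k \le \frac{n}{2}$ and $k > \frac{n}{2}$, using the degree sum together with $e(A) \le \binom{k}{2}$ in the first case, and in the second a crossing-edge bound taken from the \emph{outside} vertices ($d_A(v) \ge \delta - (n-k-1)$ for $v \notin A$) followed by an averaging step. You instead bound $\partial$ from \emph{inside} $\bar B = A$ uniformly in $m$, accept that the bound $\partial \ge (n-m)(\delta - n + m + 1)$ may be negative (in which case it is vacuously true since $\partial \ge 0$, so adding it to the degree-sum estimate remains legitimate --- this is the one point worth stating explicitly rather than gesturing at), and observe that in the final comparison against $\binom{m}{2}$ the $m$-dependence cancels, leaving the single threshold $2\delta > n-2$. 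This buys a shorter, case-free argument in which the role of the hypothesis $\delta \ge \frac{n-1}{2}$ is completely transparent, and the strictness needed for the ``only if'' direction of Lemma \ref{simple} is automatic for every $1 \le m \le n-1$; the paper's case analysis buys the explicit estimate \eqref{eqn3} as a standalone statement, but since that estimate is not reused elsewhere, nothing is lost along your route, and Proposition \ref{extraadded} follows from it via Theorem \ref{th1} just as in the paper.
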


\begin{proof}
For $k \leq \frac{n}{2}$, fix $A \subset V(H)$ with $|A| = k$. As $\delta(H) \ge \frac{n-1}{2}$, $$\sum_{v \in A} d_H(v) \ge k \cdot \frac{n-1}{2}.$$
Clearly, each edge in the induced subgraph on $A$ contributes $2$ to $\sum_{v \in A} d_H(v)$. So,

\begin{equation}\label{eqn1}
a(A) \ge k \cdot \frac{n-1}{2} - \binom{k}{2} = k \cdot \frac{n - k}{2}.
\end{equation} 
Similarly, for $\frac{n}{2} < k < n$, fix $A \subset V(H)$ with $|A| = k$. As $\delta(H) \ge \frac{n-1}{2}$, for $v \in V(H) \setminus A$, 
$$d_A(v) \ge \frac{n-1}{2} - (n-k-1).$$ 
Hence, the number of edges between $A$ and $V(H) \setminus A$ is at least $(n-k)\left(\frac{n-1}{2} - (n-k-1)\right) = (n-k)\left(k - \frac{n-1}{2}\right)$.
Now because $\sum_{v \in A} d_H(v) \ge k \cdot \frac{n-1}{2}$,

\begin{align}\label{eqn2}
a(A) &\geq (n-k)\Big(k - \frac{n-1}{2}\Big) + \frac{1}{2} \bigg(k \cdot \frac{n-1}{2} - (n-k)\Big(k - \frac{n-1}{2}\Big)\bigg) \nonumber \\
&= \frac{1}{2}(n-k)\Big(k - \frac{n-1}{2}\Big) + \frac{1}{2} \cdot k \cdot \frac{n-1}{2} \nonumber \\
&> \frac{1}{2}(n-k)\Big(k - \frac{n-1}{2}\Big) + \frac{1}{2} \cdot (n-k) \cdot \frac{n-1}{2} \nonumber \\
&= k \cdot \frac{n - k}{2}.
\end{align}
Hence, by using \eqref{eqn1} and \eqref{eqn2}, we can conclude that,

\begin{equation}\label{eqn3}
a(k) \geq k \cdot \frac{n - k}{2}.
\end{equation}
Note that $a(n)$ is just the number of edges in $H$ and the number of edges in $H$ is at most $a(A) + \binom{n-|A|}{2}$ for any $A \subseteq V(H)$. Hence for any $k$,

\begin{equation}\label{eqn4}
 a(n) \leq a(k) + \binom{n-k}{2}.
 \end{equation}
Now from \eqref{eqn4} and \eqref{eqn3}, we can say that
$$a(n) < a(k) + (n-k) \cdot \frac{a(k)}{k}.$$ 
So, $$\frac{a(k)}{k} > \frac{a(n)}{n}.$$ 
\end{proof}

Now in order to prove Theorem \ref{th2}, we need a technical lemma. For $A \subseteq V(H)$, let $e(A)$ denote the number of edges in the induced subgraph on $A$, and let $e(k)$ denote the maximum value of $e(A)$ among all $k$-vertex subsets $A \subseteq V(H)$. %For the convenience of writing the lemma, we define $a(0) = 0$.

\begin{lemma}\label{ext_le}
If $\delta(H) \ge \frac{n-1}{2}$, then for all positive integers $k$ and $l$ less than $n$, with $k+l \le n$, we have $e(k) + e(l) < e(k+l)$.
\end{lemma}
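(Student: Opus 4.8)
The plan is to exhibit a single $(k+l)$-vertex set whose induced subgraph has strictly more than $e(k)+e(l)$ edges. The conceptual tool I would use is that the set function $S \mapsto e(S)$ (the number of edges induced on $S$) is \emph{supermodular}: for all $S,T \subseteq V(H)$ we have $e(S\cup T)+e(S\cap T)\ge e(S)+e(T)$, because every edge lying inside $S$ or inside $T$ also lies inside $S\cup T$, while an edge inside both lies inside $S\cap T$. Moreover the inequality is strict exactly when some edge joins $S\setminus T$ to $T\setminus S$. So if I fix a $k$-set $A$ with $e(A)=e(k)$ and an $l$-set $B$ with $e(B)=e(l)$ that are \emph{disjoint}, supermodularity gives $e(k+l)\ge e(A\cup B)=e(k)+e(l)+e(A,B)$, where $e(A,B)$ counts the crossing edges; the task then reduces to forcing $e(A,B)\ge 1$. (The same reduction can be read off the identity $e(j)=a(n)-a(n-j)$ that links this statement to Lemma~\ref{simple}, rewriting the claim as a strict convexity-type inequality $a(n-k)+a(n-l)>a(n)+a(n-k-l)$.)

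The crossing edge is precisely where the hypothesis $\delta(H)\ge\frac{n-1}{2}$ enters. Since $|A\cup B|=k+l\le n$, any vertex $b\in B$ has at most $(l-1)+(n-k-l)=n-k-1$ potential neighbours outside $A$; hence if $d_H(b)\ge\frac{n-1}{2}>n-k-1$, that is $k>\frac{n-1}{2}$, the vertex $b$ must have a neighbour in $A$, so $e(A,B)\ge 1$. By symmetry this settles every case with $\max(k,l)>\frac{n-1}{2}$, and in particular the boundary case $k+l=n$ (where the larger of $k,l$ is at least $n/2>\frac{n-1}{2}$). First I would dispatch this regime, giving strict superadditivity immediately.

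The remaining regime, $k+l\le n-1$ with both $k,l\le\frac{n-1}{2}$, is where I expect the main obstacle to lie, and it has two intertwined difficulties: guaranteeing that I may take the optimal sets $A$ and $B$ disjoint (the global maximizers could a priori overlap), and then producing a connecting edge between them. Both should be handled using that $\delta(H)\ge\frac{n-1}{2}$ forces $H$ to be connected (two components would each need more than $\frac{n-1}{2}$ vertices, exceeding $n$), together with an exchange argument: starting from disjoint near-optimal $A,B$ that are non-adjacent, connectivity supplies a short path, and relocating vertices of $B$ along it preserves $e(B)=e(l)$ while creating a crossing edge. The concrete alternative I would keep in reserve is a greedy extension of a maximum $k$-set, adding $l$ vertices one at a time and bounding each gain below by the average crossing degree $\tfrac{1}{n-c}\bigl(c\cdot\tfrac{n-1}{2}-2e(A_c)\bigr)$ for the current size $c$; the technical crux is then to show that the total gain exceeds $e(l)$, which is exactly the point at which the degree hypothesis must be spent carefully.
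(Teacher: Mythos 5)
There is a genuine gap, and it is exactly the one you flagged but did not close: your argument in every regime needs a $k$-set $A$ with $e(A)=e(k)$ and an $l$-set $B$ with $e(B)=e(l)$ that are \emph{disjoint}, and such a pair of simultaneous global maximizers need not exist (both optima may insist on occupying the same dense spot of $H$). Note that this assumption already infects the case you declare ``settled'': for $\max(k,l)>\frac{n-1}{2}$ your crossing-edge count is fine, but it is applied to disjoint maximizers whose existence is unproved. Your two fallback plans do not repair this. The exchange argument asserts that relocating vertices of $B$ along a path ``preserves $e(B)=e(l)$,'' which is unjustified — moving a vertex of an optimal set generically destroys optimality, and connectivity of $H$ gives no control over $e$ of the perturbed set. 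The greedy extension fails quantitatively: with $|A_c|=m$ and only the trivial bound $e(A_c)\le\binom{m}{2}$, your per-step gain estimate is
\[
\frac{m\cdot\frac{n-1}{2}-2e(A_c)}{n-m}\;\ge\;\frac{m\,(n-2m+1)}{2(n-m)},
\]
which is $O(1)$ when $m$ is near $\frac{n}{2}$ (e.g.\ it equals $\frac12$ at $m=\frac{n}{2}$), whereas beating $e(l)\le\binom{l}{2}$ requires an average gain of order $l$ per step; so in the hardest regime $k,l\approx\frac{n}{2}$ the ledger does not balance, and you have not supplied the potential-function argument that would be needed to exploit the fact that a large $e(A_c)$ means earlier steps already gained a lot.

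The paper's proof dissolves the disjointness problem with a device you did not find: it fixes only \emph{one} maximizer, an $l$-set $A$ with $e(A)=e(l)$, adds a \emph{uniformly random} $k$-subset $B\subseteq V(H)\setminus A$, and computes $\mathbb{E}[e(A\cup B)]$ by retaining $A$--outside edges with probability $\frac{k}{n-l}$ and outside--outside edges with probability $\binom{k}{2}/\binom{n-l}{2}$. The minimum-degree hypothesis is then spent to show the expected gain strictly exceeds $\binom{k}{2}$, and the crucial trick is to compare against the trivial universal bound $e(k)\le\binom{k}{2}$ rather than against the unknown quantity $e(k)$ itself — so no second maximizer, disjoint or otherwise, is ever needed. (Taking all $k$ vertices at once is also what rescues the quantitative count: the random set collects a $\binom{k}{2}/\binom{n-l}{2}$ fraction of \emph{all} edges outside $A$, which your one-vertex-at-a-time bound discards.) Your supermodularity framing and the translation $e(j)=a(n)-a(n-j)$ to the form $a(n-k)+a(n-l)>a(n)+a(n-k-l)$ are both correct and match the paper's Corollary~\ref{ext_co}, but the core of the lemma remains unproved in your sketch.
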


Before proving Lemma \ref{ext_le}, let us first state an equivalent form of it. Remember that $a(0) = 0$ from Definition \ref{defnA}.

\begin{corollary}\label{ext_co}
If $\delta(H) \ge \frac{n-1}{2}$, then for all positive integers $k$ and $l$ less than $n$, with $k+l \ge n$, we have $a(k) + a(l) > a(n) + a(r)$, where $k + l = n + r$.
\end{corollary}

\begin{proof}
Note that for any $A \subset V(H)$, $e(A) + a(V(H) \setminus A)$ is just the number of edges in $H$. So for each $k \le n$, $e(k) + a(n - k) = e(n) = a(n)$. Hence, we have the following: %in order to prove Lemma \ref{ext_le}, it is enough to prove that $e(k) + e(l) < e(k+l)$ whenever $k,l > 0$ and $k + l \le n$. 
\begin{align*}
&e(k) + e(l) < e(k+l) \text{	for all	} k,l \le n \text{	and	} k+l \le n
\\&\Leftrightarrow \left(a(n) - e(k)\right) + \left(a(n) - e(l)\right) > 2a(n) - e(k+l) \text{	for all	} k,l \le n \text{	and	} k+l \le n
\\&\Leftrightarrow a(n-k) + a(n-l) > a(n) + a(n-k-l) \text{	for all	} k,l \le n \text{	and	} k+l \le n
\\&\Leftrightarrow a(k) + a(l) > a(n) + a(k+l-n) \text{	for all	} k,l \le n \text{	and	} k+l \ge n.
\end{align*}
\end{proof}

\begin{proof} [Proof of Lemma \ref{ext_le}]
Without loss of generality, assume that $k \le l$.

\medskip
\noindent \emph{\textbf{Case 1: $\boldsymbol{k \leq l \leq \frac{n}{2}}$.}}
Suppose $A \subseteq V(H)$ is an $l$-vertex subset such that $e(A) = e(l)$. Our strategy is to choose a $k$-vertex subset $B \subseteq V(H) \setminus A$ uniformly at random and show that the expected number of edges inside $A \cup B$ exceeds $e(k) + e(l)$. As $\delta(H) \ge \frac{n-1}{2}$, the number of edges in $H$ that are not inside the induced subgraph on $A$ is at least $\frac{n(n-1)}{4} - \binom{l}{2}$. Also, each vertex in $A$ can have at most $l-1$ neighbors in $A$, so the number of edges with one endpoint in $A$ and another endpoint in $V(H) \setminus A$ is at least $(\frac{n-1}{2} - (l-1))l$. Now if we choose a uniformly random subset $B \subseteq V(H) \setminus A$ of size $k$, the edges with one endpoint in $A$ and another endpoint in $V(H) \setminus A$ will be in $A \cup B$ with probability $\frac{k}{n-l}$, and the edges with both endpoints in $V(H) \setminus A$ will be in $A \cup B$ with probability $\frac{\binom{k}{2}}{\binom{n-l}{2}}$. Note that $\frac{k}{n-l} \ge \frac{\binom{k}{2}}{\binom{n-l}{2}}$. So, we have the following lower bound on the expected number of edges in $A \cup B$, which we denote by $\mathbb{E}[e(A \cup B)]$.

\begin{align*}
\mathbb{E}[e&(A \cup B)] \\
&\ge e(A) + \Big(\frac{n-1}{2} - (l-1)\Big)l \cdot \frac{k}{n-l} + \bigg(\frac{n(n-1)}{4} - \binom{l}{2} - \Big(\frac{n-1}{2} - (l-1)\Big)l\bigg) \frac{\binom{k}{2}}{\binom{n-l}{2}} \displaybreak[0] \\
&\ge e(A) + \frac{1}{2}(n-2l+1)(n-l) \cdot \frac{k^2}{(n-l)^2} + \bigg(\frac{n-1}{4}(n-2l) + \frac{l(l-1)}{2} \bigg) \frac{\binom{k}{2}}{\binom{n-l}{2}} \displaybreak[0] \\
&\ge e(A) + \frac{\binom{k}{2}}{\binom{n-l}{2}} \bigg( \frac{1}{2}(n-l)(n-2l+1) + \frac{n-1}{4}(n-2l) + \frac{l(l-1)}{2} \bigg) \displaybreak[0] \\
&= e(A) + \frac{\binom{k}{2}}{2(n-l)(n-l-1)} \left(2(n-l)(n-2l) + (2n - 2l) + n(n-2l) - (n-2l) + 2l^2 -2l\right) \displaybreak[0] \\
&> e(A) + \binom{k}{2} \frac{(n-2l)(3n-2l) + 2l^2}{2(n-l)^2}. \\
\end{align*}
If we parametrize $x = \frac{l}{n}$ then $\frac{(n-2l)(3n-2l) + 2l^2}{2(n-l)^2} = \frac{(1-2x)(3-2x) + 2x^2}{2(1-x)^2}$ which we define to be the function $f(x)$. Note that $0 \le x \le \frac{1}{2}$ because $l \le \frac{n}{2}$, and by routine calculus, it can be checked that $f(x) \ge 1$ for $0 \le x \le \frac{1}{2}$. So, $$\mathbb{E}[e(A \cup B)] > e(A) + \binom{k}{2} \geq e(l) + e(k).$$ Hence, there exists a $k$-vertex subset $B \subset V(H) \setminus A$ such that $e(A \cup B) > e(k) + e(l)$ where $|A \cup B| = k+l$, proving that $e(k+l) > e(k) + e(l)$.

\medskip
\noindent \emph{\textbf{Case 2: $\boldsymbol{k \leq \frac{n}{2} < l}$.}}
We proceed similarly to the first case. Suppose $A \subseteq V(H)$ is an $l$-vertex subset such that $e(A) = e(l)$. Clearly, $|V(H) \setminus A| = n-l$, and of course $e(V(H) \setminus A) \leq \binom{n-l}{2}$. The number of edges with one endpoint in $A$ and another endpoint in $V(H) \setminus A$ is at least $(n-l)\frac{n-1}{2} - 2 \cdot e(V(H) \setminus A)$. Now, the total number of edges in $H$ that are not inside the induced subgraph on $A$ is at least 
$$e(V(H) \setminus A) + (n-l)\frac{n-1}{2} - 2 \cdot e(V(H) \setminus A) \geq (n-l)\frac{n-1}{2} - \binom{n-l}{2} > \binom{n-l}{2}.$$
\\Now if we choose a uniformly random subset $B \subseteq V(H) \setminus A$ of size $k$, the edges with one endpoint in $A$ and another endpoint in $V(H) \setminus A$ will be in $A \cup B$ with probability $\frac{k}{n-l} \ge \frac{\binom{k}{2}}{\binom{n-l}{2}}$, and the edges with both endpoints in $V(H) \setminus A$ will be in $A \cup B$ with probability $\frac{\binom{k}{2}}{\binom{n-l}{2}}$. Hence, $\mathbb{E}[e(A \cup B)] > e(A) + \frac{\binom{k}{2}}{\binom{n-l}{2}} \binom{n-l}{2} = e(A) + \binom{k}{2}$. So, we are done.
\end{proof}

Now armed with the last lemma we are ready to prove a technical theorem with a more general condition than Theorem \ref{th2}, which will imply Theorem \ref{th2}.

\begin{theorem}\label{thatsit}
Suppose $H$ is a graph with $n$ vertices and minimum degree at least $\frac{n-1}{2}$, and $a(k) + a(l) > a(k+l)$ for all positive integers $k$ and $l$ with $k + l \leq n$. Then, $H$ is an ideal graph.
\end{theorem}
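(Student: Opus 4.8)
The plan is to deduce everything from the integer program of Theorem \ref{th1} specialized to $t=2$ (so $K=K_2$ and $a_K=a$), for which $\cov_{K_2}(N,H)$ equals the minimum of \eqref{IP}. The construction in the proof of Theorem \ref{th1} already realizes the ``canonical'' feasible point $x_n=q$, $x_r=1$ (and $x_k=0$ otherwise, or simply $x_n=q$ when $r=0$) by a graph consisting of a member of $\L_{n+r}^H$ together with $q-1$ disjoint copies of $H$, which has exactly $q\,a(n)+a(r)$ edges. Thus it suffices to prove two things: first, that this canonical point is the unique optimum of \eqref{IP}; and second, that every edge-minimal $H$-covered graph on $N=qn+r$ vertices is forced to have the corresponding structure.

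For the first point I would argue by an exchange/monovariant argument directly on \eqref{IP}. I read a feasible solution as a multiset of ``pieces,'' with $x_k$ pieces of size $k$, of total size $N$ and at least one piece of size $n$; the objective is the sum of $a(\cdot)$ over the pieces. As long as there are at least two pieces of size strictly less than $n$, pick two of sizes $k,l<n$. If $k+l\le n$, replace them by a single piece of size $k+l$, and the hypothesis $a(k)+a(l)>a(k+l)$ makes the objective strictly decrease. If $k+l>n$, replace them by one piece of size $n$ and one piece of size $k+l-n$, and now Corollary \ref{ext_co} makes the objective strictly decrease. In either case the number of pieces of size less than $n$ drops by at least one, so the process terminates with at most one such piece, and the divisibility $N=qn+r$ then forces exactly the canonical configuration. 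Since every step strictly lowered the objective, any non-canonical feasible solution is strictly worse, giving uniqueness of the optimum and recovering the minimum value $q\,a(n)+a(r)$. Note that this uses the superadditivity hypothesis for the ``small'' merges and the hypothesis $\delta(H)\ge\frac{n-1}{2}$, through Corollary \ref{ext_co}, for the ``large'' ones.

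For the second point I would run Algorithm \ref{algo} on an edge-minimal $G$, exactly as in the warm-up for Proposition \ref{pr}. The sizes $k_i=|V_i\cap S(v_i)|$ form a feasible solution of \eqref{IP} whose value lower-bounds $|E(G)|$, so edge-minimality forces three equalities simultaneously: $E_{m+1}=\emptyset$ (every edge is covered), each batch contributes exactly $a(k_i)$ edges, and $(k_i)$ is optimal, hence by the first point its multiset is $n$ taken $q$ times together with a single $r$. Decoding these equalities should pin down the graph: the batches of size $n$ are vertex-disjoint copies of $H$; the single batch of size $r$ shares $n-r$ vertices with an earlier copy, and the tightness ``$a(r)$'' forces that shared $(n-r)$-set to induce a subgraph of maximum edge count, while $E_{m+1}=\emptyset$ forbids edges between the two private $r$-sets. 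This is precisely a member of $\L_{n+r}^H$ glued to $q-1$ disjoint copies of $H$. The converse direction is immediate, since any such graph is $H$-covered with $q\,a(n)+a(r)$ edges.

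I expect the genuine content to be the uniqueness of the optimum of \eqref{IP}, where both hypotheses enter and the merge/convert dichotomy must cover every pair of small pieces; the exchange step itself is short once Corollary \ref{ext_co} is in hand. The more tedious obstacle is the ``only if'' bookkeeping of the second point: translating the equality conditions into the exact geometry of $\L_{n+r}^H$ and, since an $H$-covering copy only induces a supergraph of $H$, also ruling out spurious extra edges so that each tight batch is an induced copy. This mirrors the step deemed ``straightforward'' at the end of the warm-up for Proposition \ref{pr}, and I would carry it out by the same direct accounting.
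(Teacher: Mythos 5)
Your proposal is correct and follows essentially the same route as the paper's proof: the paper likewise notes that the superadditivity hypothesis is exactly inequality \eqref{conv_type_ineq_1} and that Corollary \ref{ext_co} (available since $\delta(H)\ge\frac{n-1}{2}$) gives inequality \eqref{conv_type_ineq_2}, concludes that the integer program \eqref{IP} has the unique optimum $x_n=q$, $x_r=1$, and then establishes uniqueness of the extremal graph by the same tightness analysis of Algorithm \ref{algo} as in Proposition \ref{pr}. Your explicit merge/convert exchange argument and the decoding caveats (no edges left in $E_{m+1}$, no spurious edges in a covering batch) merely spell out steps the paper compresses into ``the same unique minimum solution as in Proposition \ref{pr}'' and ``routine to verify.''
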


\begin{proof}
Fix positive integers $q,n$ and $N = qn + r$ with $0 \le r < n$. The assumption that $a(k) + a(l) > a(k + l)$ is the same as equation \eqref{conv_type_ineq_1} in the proof of Proposition \ref{pr}, and the conclusion of Corollary \ref{ext_co} is the same as equation \eqref{conv_type_ineq_2} in the proof of Proposition \ref{pr}. So, the integer program \eqref{IP} in Theorem \ref{th1} has the same unique minimum solution as in Proposition \ref{pr}, i.e., $x_n = q$, $x_r = 1$, and $x_k = 0$ for all other $k$. This implies that the extremal graph in Definition \ref{defn1} has the minimum number of edges among $N$-vertex $H$-covered graphs. Now the only remaining thing that we need to show is the uniqueness of the extremal graph. This proof is very similar to the uniqueness proof in Proposition \ref{pr}, which is routine to verify. %Now similar to the conclusion drawn from the equations \eqref{conv_type_ineq_1} and \eqref{conv_type_ineq_2} in the Proposition \ref{pr}, for this Theorem \ref{thatsit} the integer program \eqref{IP} in the Theorem \ref{th1} has the minimum solution if and only if $x_n = q$, $x_r = 1$ and $x_k = 0$ for other $k \in [n]$, because of the Lemma \ref{ext_le} we just proved and the assumption that $a(k) + a(l) > a(k + l)$ for all $k,l > 0$ with $k + l \le n$. Now again very similar to the proof of Proposition \ref{pr}, by analyzing the algorithm \ref{algo} for $N$-vertex $H$-covered graphs we can finish the proof of Theorem \ref{thatsit}.
\end{proof}

\begin{proof}[\textbf{Proof of Theorem \ref{th2}}]
Consider an arbitrary $d$-regular graph $H$. Observe that for any $A \subseteq V(H)$, we have $a(A) + e(A) = d|A|$. Hence, 
\begin{equation}\label{miracle}
a(k) + e(k) = dk \text{ for } k \le n.
\end{equation} 
If $d \ge \frac{n-1}{2}$, Lemma \ref{ext_le} implies that $e(k+l) > e(k) + e(l)$ for all $k,l > 0$ with $k + l \le n$. Now by using \eqref{miracle}, we can conclude that $a(k) + a(l) > a(k + l)$ for all $k,l > 0$ with $k + l \le n$. Hence, Theorem \ref{th2} is now implied by Theorem \ref{thatsit}.
\end{proof}

\subsection{Construction showing that Theorem \ref{th2} is tight}\label{tight}
In this subsection, we construct graphs to show the tightness of Theorem \ref{th2} in a strong sense. For every odd $n$, we will construct non-ideal regular graphs with $n$ vertices and degree as close as possible but less than $\frac{n-1}{2}$. We need a few notations for the ease of describing the graphs.
\begin{notation*}
\mbox{}
\begin{itemize}
\item Suppose $G_i$ is a graph for all $i \in [n]$ and $a_i$ is a non-negative integer for all $i \in [n]$. We denote the disjoint union of $a_i$ many $G_i$ for all $i \in [n]$ by $\oplus_{i=1}^n a_iG_i$. We simply write $G$ instead of $1G$.
\item Suppose $\S_i$ is a set of graphs for all $i \in [n]$. The set $\oplus_{i=1}^n\S_i$ denotes the set $\{\oplus_{i=1}^n H_i : H_i \in \S_i\}$.
\end{itemize}
\end{notation*}

%\subsubsection{\textbf{Case when $\boldsymbol{n}$ is odd, where $\boldsymbol{n}$ is the number of vertices of $\boldsymbol{H}$}}
Let $n = 2l+1$ and $d$ be an even number such that $d < \frac{n-1}{2}$. This is the highest we can consider because there are no graphs with an odd number of vertices and an odd degree of regularity. Consider a connected $d$-regular graph $H_1$ on $l$ vertices and a connected $d$-regular graph $H_2$ on $l+1$ vertices. Consider the $d$-regular graph $H = H_1 \oplus H_2$, which has $n$ vertices. 

\begin{lemma} \label{minimized}
$\frac{a(k)}{k}$ is minimized if and only if $k \in \{l,l+1,2l+1\}$.
\end{lemma}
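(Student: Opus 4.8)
The plan is to reduce the statement to a clean fact about cuts in the regular graph $H = H_1 \oplus H_2$. Since $H$ is $d$-regular, equation \eqref{miracle} gives $a(k) = dk - e(k)$ for every $k$, so $\frac{a(k)}{k} = d - \frac{e(k)}{k}$, and minimizing $\frac{a(k)}{k}$ over $k \in [n]$ is exactly the same as maximizing the edge density $\frac{e(k)}{k}$. Thus I only need to locate the maximizers of $\frac{e(k)}{k}$.

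The key identity is that for any vertex subset $S$, double counting the degrees incident to $S$ yields $d|S| = 2e(S) + c(S)$, where $c(S)$ denotes the number of edges with exactly one endpoint in $S$ (the size of the cut separating $S$ from its complement). Equivalently $e(S) = \frac{1}{2}\big(d|S| - c(S)\big)$. Since $c(S) \ge 0$, every $k$-vertex subset satisfies $e(S) \le \frac{dk}{2}$, and therefore $e(k) \le \frac{dk}{2}$, i.e. $\frac{e(k)}{k} \le \frac{d}{2}$ for all $k$. This establishes that $\frac{a(k)}{k} \ge \frac{d}{2}$ uniformly, so it remains to characterize when this lower bound is attained.

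Equality $\frac{e(k)}{k} = \frac{d}{2}$ forces the existence of a $k$-vertex subset $S$ with $c(S) = 0$, that is, a subset that is a union of connected components of $H$. This is the one place where the hypothesis that $H_1$ and $H_2$ are connected enters: the connected components of $H$ are precisely $H_1$ and $H_2$, so the only subsets with empty cut are $\emptyset$, $V(H_1)$, $V(H_2)$, and $V(H)$, whose sizes are $0$, $l$, $l+1$, and $2l+1$. Conversely, each of $V(H_1)$, $V(H_2)$, $V(H)$ does realize $e(S) = \frac{d|S|}{2}$, so $\frac{e(k)}{k} = \frac{d}{2}$ precisely for $k \in \{l, l+1, 2l+1\}$ (discarding the trivial $k = 0$). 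Combining the two steps, $\frac{a(k)}{k}$ attains its minimum value $\frac{d}{2}$ if and only if $k \in \{l, l+1, 2l+1\}$, as claimed.

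There is no genuine analytic obstacle here; the only subtlety lies in the ``only if'' direction, where one must rule out $k$-subsets that are dense but are not full components. This is handled uniformly by the cut identity together with the connectivity of the two pieces, and I expect the writeup to be short, with the relation $e(S) = \frac{1}{2}\big(d|S| - c(S)\big)$ doing essentially all of the work.
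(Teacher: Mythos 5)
Your proof is correct and is essentially the paper's own argument: both rest on the double-counting identity $d|S| = 2e(S) + c(S)$ (equivalently $a(A) = \tfrac{1}{2}\big(d|A| + c(A)\big) \ge \tfrac{d|A|}{2}$), with equality precisely when $S$ has empty cut, i.e.\ is a union of the connected components $H_1$, $H_2$, of sizes $l$, $l+1$, $2l+1$. The only cosmetic difference is that you phrase it as maximizing $\frac{e(k)}{k}$ via \eqref{miracle} while the paper works directly with $a(k) \ge \frac{dk}{2}$; the content is identical.
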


\begin{proof}
As $H$ is $d$-regular, $a(k) \geq \frac{dk}{2}$. Now $a(V(H_1)) = \frac{dl}{2}$, $a(V(H_2)) = \frac{d(l+1)}{2}$ and $a(V(H)) = \frac{d(2l+1)}{2}$. Hence, for $k \in \{l, l+1, 2l+1\}$, $\frac{a(k)}{k}$ achieves its minimum. For $A \subset V(H)$ with $|A| \notin \{l, l+1, 2l+1\}$, there is at least one edge between $A$ and $V(H) \setminus A$, which implies that $a(A) > \frac{d|A|}{2}$. So, $\frac{a(k)}{k} > \frac{d}{2}$ for $k \notin \{l, l+1, 2l+1\}$.
\end{proof}

\begin{proposition}
$H$ is not ideal in any remainder class.
\end{proposition}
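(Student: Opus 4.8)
The plan is to sidestep the integer program of Theorem \ref{th1} and instead give a direct structural description of all edge-minimal $H$-covered graphs, from which non-idealness in every remainder class follows by elementary counting. The claim I would prove is the following: for all sufficiently large $N$, a graph $G$ on $N$ vertices is edge-minimal among $H$-covered graphs if and only if $G \cong aH_1 \oplus bH_2$ for some integers $a,b \ge 1$ with $al + b(l+1) = N$.

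First I would establish the lower bound together with a rigidity statement. Since $H$ is $d$-regular, any $H$-covered graph has minimum degree at least $\delta(H) = d$, so $e(G) \ge \frac{dN}{2}$, with equality exactly when $G$ is $d$-regular; this matches the value coming from Lemma \ref{minimized}. The crux is to show that a $d$-regular $H$-covered graph is forced to be a disjoint union of copies of $H_1$ and $H_2$. Fixing a vertex $v$, choose $n$ vertices spanning a (not necessarily induced) copy of $H = H_1 \oplus H_2$ through $v$, and let $P$ be the $l$ vertices carrying $H_1$. Each $u \in P$ already has its $d$ neighbours from that copy of $H_1$ inside $P$, so by $d$-regularity these are all of its neighbours in $G$; hence $P$ is closed under adjacency and is therefore a union of components, and being connected and $d$-regular it is exactly one component isomorphic to $H_1$. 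Applying the same reasoning to the $H_2$-side shows every component of $G$ is a copy of $H_1$ or $H_2$, i.e. $G \cong aH_1 \oplus bH_2$. Covering forces $a,b \ge 1$ (an $H_1$-component vertex needs an $H_2$-component to sit in a copy of $H$, and symmetrically), and conversely any $aH_1 \oplus bH_2$ with $a,b \ge 1$ is $d$-regular and $H$-covered, hence edge-minimal; the coprimality of $l$ and $l+1$ guarantees such a representation of $N$ exists once $N$ is large.

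Granting the characterisation, non-idealness is a counting statement. The integer solutions of $al + b(l+1) = N$ are exactly $(a_0 + t(l+1),\, b_0 - tl)$ for $t \in \mathbb{Z}$, starting from any particular solution $(a_0,b_0)$, and the number of these with both coordinates at least $1$ grows linearly in $N$. Thus for each fixed remainder $r$ and all large $q$ there are at least two distinct admissible pairs $(a,b)$, producing at least two non-isomorphic edge-minimal $H$-covered graphs on $N = qn + r$ vertices. Since Definition \ref{defn1} requires the single structure $\L_{n+r}^H \oplus (q-1)H$ to be the unique edge-minimal graph, the presence of a second edge-minimal graph shows $H$ is not ideal in remainder class $r$; as $r$ is arbitrary this proves the proposition. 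For $r \notin \{0,l,l+1\}$ one even gets a cleaner failure: by Lemma \ref{minimized}, $a(r) > \frac{dr}{2}$, so the proposed structure has $q\,a(n) + a(r) > \frac{dN}{2}$ edges and is not edge-minimal at all.

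The step I expect to be the main obstacle is the rigidity argument. The definition of $H$-covered only supplies a copy of $H$ as a subgraph, not as an induced subgraph, so a priori the $l$- and $(l+1)$-vertex blocks could carry extra edges or connect to the rest of $G$; the work is in checking that $d$-regularity rules this out and pins each block down to an honest component isomorphic to $H_1$ or $H_2$. Once the block structure is locked in, the remaining count of representations $N = al + b(l+1)$ is routine.
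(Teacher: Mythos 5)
Your proof is correct, but it takes a genuinely different and stronger route than the paper's. The paper does not classify the extremal graphs at all: it only records the lower bound $e(G) \ge \frac{dN}{2}$ (every vertex lies in a copy of the $d$-regular $H$), then for each $r$ picks $a > 1$ and $b$ with $an + r = n + bl$ (possible since $\gcd(l, 2l+1) = 1$) and exhibits the single $d$-regular, $H$-covered graph $G = H \oplus bH_1 = H_2 \oplus (b+1)H_1$, which is edge-minimal yet not isomorphic to any graph in $(a-1)H \oplus \L_{n+r}^H$, because every graph in that family contains two disjoint copies of $H$ while $G$ has only one component isomorphic to $H_2$ --- the paper leaves implicit exactly the rigidity fact you prove, namely that a copy of a connected $d$-regular graph inside a $d$-regular host must be a whole component. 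Your rigidity lemma is sound as written: in a $d$-regular $G$, the $l$ vertices carrying the $H_1$-part of a covering copy of $H$ already have all $d$ neighbours inside that set, so the set is a single component isomorphic to $H_1$, and likewise for the $H_2$-part; this yields the full classification $G \cong aH_1 \oplus bH_2$ with $a, b \ge 1$ for all large $N$, which is strictly more than the paper establishes (the paper gets one counterexample per remainder class, you get the complete extremal family for every large $N$). One sentence of your final deduction is imprecise and should be repaired: ideality does not literally require a \emph{unique} edge-minimal graph, since $\L_{n+r}^H$ is a collection (and for $r \in \{0, l, l+1\}$ its members do decompose into copies of $H_1$ and $H_2$), so the mere existence of two non-isomorphic extremal graphs is not by itself a contradiction. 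Your classification fixes this instantly, however: every graph of the form $(q-1)H \oplus L$ with $L \in \L_{n+r}^H$ has at least $q-1$ components isomorphic to $H_2$, whereas among your admissible pairs the one with minimal $b$ has $b \le l < q-1$ once $q > l+1$; that edge-minimal graph is therefore not of the prescribed form, and $H$ is not ideal in remainder class $r$. Your closing observation for $r \notin \{0, l, l+1\}$ (that the prescribed structure has $q\,a(n) + a(r) > \frac{dN}{2}$ edges and is not even extremal) coincides with the remark the paper makes immediately after its proof.
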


\begin{proof}
Let $N \ge n$ and $G$ be an $N$-vertex $H$-covered graph. Clearly, every vertex of $G$ has degree at least $d$ because every vertex of $G$ is in a copy of $H$, which is a $d$-regular graph. Hence, the number of edges of $G$ is at least $\frac{dN}{2}$. Now fix $r \in \{0,1,...,n-1\}$. Choose $a, b \in \mathbb{N}$ such that $a>1$ and $an + r = n + bl$. This choice is possible because $n = 2l+1$. Let $N = an + r = n + bl$. Consider the graph $G = H \oplus bH_1 = H_2 \oplus (b+1)H_1$, clearly every vertex of $G$ is in a copy of $H$. Notice that the number of edges in $G$ is $\frac{dN}{2}$, because the degrees of all vertices are $d$. Observe that $G$ is not isomorphic to any graph in $(a-1)H \oplus \L_{n+r}^H$, because there are at least $2$ disjoint copies of $H$ in any graph in $(a-1)H \oplus \L_{n+r}^H$. So, $H$ is not ideal. 
\end{proof}

\begin{remark}
Moreover, due to Lemma \ref{minimized} the number of edges in any graph in $(a-1)H \oplus \L_{n+r}^H$ is strictly greater than $\frac{dN}{2}$ when $r \notin \{0, l, l+1\}$. 
\end{remark}

\section{Random graph tools for proving Theorem \ref{th3}}\label{sec4}
This section will be full of facts and lemmas about $G_{n,p}$, where $0 < p < 1$ is a fixed constant. Throughout this paper we will only focus on constant probability $p$ in $G_{n,p}$ (in other words, $p$ will not depend on $n$). We will often denote $1-p$ by $q$. In this section and the next section, all logarithms are in base $e$. We will begin by mentioning some of the theorems and lemmas about the degree sequence of $G_{n,p}$ from \cite{BB} and \cite{FK}, which will be useful in the proof of Theorem \ref{th3}. Let the degree sequence of $G_{n,p}$ be $d_1 \le d_2 \le ... \le d_n$. The abbreviation w.h.p.\ stands for \textit{with high probability}, which means that the probability of the event under consideration tends to one as $n \rightarrow \infty$, as it was defined in the introduction. 

\begin{lemma}[Lemma 3.7 in \cite{FK}]\label{alan}
Let $\epsilon = 1/10$, and let $q = 1-p$. If 
$$d_{\pm} = (n-1)p - (1 \mp \epsilon)\sqrt{2(n-1)pq \log n}$$
then w.h.p.\
\begin{itemize}
\item The minimum degree $\delta(G_{n,p}) \ge d_{-}$.
\item There are $\Omega\left(n^{2 \epsilon (1 - \epsilon)}\right)$ vertices of degree at most $d_{+}$.
\item There are no $u \neq v$ such that $d_u, d_v \le d_{+}$ and $|d_u - d_v| \le 10$.
\end{itemize}
\end{lemma}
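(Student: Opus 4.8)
The plan is to reduce every assertion to precise control of the lower tail of a single binomial random variable, since the degree $d_v$ of any fixed vertex $v$ in $G_{n,p}$ is distributed as $\mathrm{Bin}(n-1,p)$, with mean $\mu=(n-1)p$ and variance $\sigma^2=(n-1)pq$. In this notation $d_\pm = \mu - (1\mp\epsilon)\sigma\sqrt{2\log n}$, i.e.\ the two thresholds sit $(1\mp\epsilon)\sqrt{2\log n}$ standard deviations below the mean, the moderate-deviation regime. The one technical input I need is a two-sided estimate for $\mathbb{P}[\mathrm{Bin}(n-1,p)\le \mu - t\sigma]$ when $t=\Theta(\sqrt{\log n})$, together with a matching estimate for the point probability $\mathbb{P}[\mathrm{Bin}(n-1,p)=j]$ for $j$ near $d_+$; both follow from a local limit theorem (or an explicit Stirling-type expansion of the binomial coefficients), giving $\mathbb{P}[\mathrm{Bin}(n-1,p)\le \mu-t\sigma]=n^{-t^2/(2\log n)+o(1)}$ and a point probability of order $n^{-1/2}$ times this tail. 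I expect assembling these precise asymptotics, rather than mere Chernoff upper bounds (which are too weak for the lower bounds needed below), to be the main technical obstacle; everything else is first- and second-moment bookkeeping.

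For the minimum-degree bound I would use only the upper estimate and a union bound: with $t=(1+\epsilon)\sqrt{2\log n}$ one gets $\mathbb{P}[d_v<d_-]\le n^{-(1+\epsilon)^2+o(1)}$, so the expected number of vertices below $d_-$ is $n^{1-(1+\epsilon)^2+o(1)}=n^{-2\epsilon-\epsilon^2+o(1)}\to 0$, and since $(1+\epsilon)^2>1$ this forces $\delta(G_{n,p})\ge d_-$ w.h.p. No independence is used here, only the marginal tail of each degree. For the count of low-degree vertices I would run a second-moment argument on $X=\#\{v:d_v\le d_+\}$. The lower tail estimate with $t=(1-\epsilon)\sqrt{2\log n}$ gives $\mathbb{P}[d_v\le d_+]=n^{-(1-\epsilon)^2+o(1)}$, so $\mathbb{E}[X]=n^{2\epsilon-\epsilon^2+o(1)}$, which comfortably exceeds the claimed $n^{2\epsilon(1-\epsilon)}$ (the stated exponent is a safe lower bound absorbing the polylog and second-moment slack).

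To control $\mathrm{Var}(X)$ I would note that for $u\ne v$ the only shared randomness between $d_u$ and $d_v$ is the edge $uv$; conditioning on $\mathbf{1}[uv\in E]$ makes the two degrees independent, and a short computation yields $\mathrm{Cov}(\mathbf{1}[d_u\le d_+],\mathbf{1}[d_v\le d_+])=pq\big(\mathbb{P}[\mathrm{Bin}(n-2,p)=d_+]\big)^2$, the square of a point probability. Summing over pairs gives $\sum_{u\ne v}\mathrm{Cov}=o(\mathbb{E}[X]^2)$, so $\mathrm{Var}(X)=O(\mathbb{E}[X])+o(\mathbb{E}[X]^2)$ and Chebyshev yields $X=(1+o(1))\mathbb{E}[X]$ w.h.p.

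For the last assertion I would bound the expected number of ordered pairs $u\ne v$ with $d_u,d_v\le d_+$ and $|d_u-d_v|\le 10$. Using near-independence again and the point-probability bound, for such a pair $\mathbb{P}[d_u\le d_+,\ |d_u-d_v|\le 10]\le \mathbb{P}[d_u\le d_+]\cdot 21\max_{j\le d_++10}\mathbb{P}[d_v=j]=n^{-(1-\epsilon)^2+o(1)}\cdot n^{-1/2-(1-\epsilon)^2+o(1)}$, since the binomial pmf is increasing up to its mode and the window lies in the lower tail. Multiplying by the $O(n^2)$ pairs gives an expectation of order $n^{3/2-2(1-\epsilon)^2+o(1)}$; with $\epsilon=1/10$ one has $2(1-\epsilon)^2=1.62>3/2$, so this tends to $0$ and Markov's inequality rules out any such pair w.h.p. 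This is exactly where the specific value $\epsilon=1/10$ enters: it is chosen small enough that $(1-\epsilon)^2>3/4$.
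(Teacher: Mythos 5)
Your proposal is correct, but note that the paper itself offers no proof of this statement: it is quoted verbatim as Lemma 3.7 of the cited textbook of Frieze and Karo\'nski \cite{FK}, so there is no internal argument to compare against. Your reconstruction --- moderate-deviation (local limit / Stirling-type) estimates for $\mathrm{Bin}(n-1,p)$ rather than bare Chernoff bounds, a first-moment union bound for $\delta(G_{n,p})\ge d_-$, a second-moment argument with the covariance identity $\mathrm{Cov}=pq\,\mathbb{P}[\mathrm{Bin}(n-2,p)=d_+]^2$ obtained by conditioning on the single shared edge $uv$, and a first-moment bound over pairs using the point probability $n^{-1/2-(1-\epsilon)^2+o(1)}$ for the near-collision statement, where $\epsilon=1/10$ enters precisely through $2(1-\epsilon)^2>3/2$ --- is essentially the standard proof found in the source, and your exponent bookkeeping checks out (e.g.\ $\mathbb{E}[X]=n^{2\epsilon-\epsilon^2+o(1)}$ safely dominates the claimed $n^{2\epsilon(1-\epsilon)}$).
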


Next we mention a well-known fact about the difference between maximum and the minimum degree of $G_{n,p}$, which can be viewed as a corollary of Lemma \ref{alan}.

\begin{corollary}[Consequence of Lemma \ref{alan}]\label{af}
W.h.p.\ $G_{n,p}$ has the property that $d_n - d_1 < 4 \sqrt{(n-1)pq \log n}$, where $q = 1-p$.
\end{corollary}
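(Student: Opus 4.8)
The plan is to bound $d_1$ from below and $d_n$ from above separately, each w.h.p., and then subtract. The lower bound on the minimum degree is immediate from the first bullet of Lemma \ref{alan}: taking $\epsilon = 1/10$, w.h.p.\ we have $d_1 = \delta(G_{n,p}) \ge d_{-} = (n-1)p - (1+\epsilon)\sqrt{2(n-1)pq \log n}$.

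To control $d_n$, I would pass to the complement. Since the complement of $G_{n,p}$ is distributed as $G_{n,q}$ with $q = 1-p$, and the degree of a vertex in $G_{n,p}$ is exactly $(n-1)$ minus its degree in the complement, the maximum degree of $G_{n,p}$ equals $(n-1)$ minus the minimum degree of $G_{n,q}$. Applying the first bullet of Lemma \ref{alan} to $G_{n,q}$ (which simply swaps the roles of $p$ and $q$ and hence leaves the product $pq$ unchanged) gives w.h.p.\ a minimum degree in the complement of at least $(n-1)q - (1+\epsilon)\sqrt{2(n-1)pq \log n}$. Translating back, w.h.p.\ we obtain $d_n \le (n-1)p + (1+\epsilon)\sqrt{2(n-1)pq \log n}$.

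Intersecting these two high-probability events — there are only finitely many, so their intersection still holds w.h.p.\ — and subtracting yields
$$d_n - d_1 \le 2(1+\epsilon)\sqrt{2(n-1)pq \log n} = 2\sqrt{2}\,(1+\epsilon)\sqrt{(n-1)pq \log n}.$$
With $\epsilon = 1/10$ the leading constant is $2\sqrt{2}\cdot\frac{11}{10} = \frac{11\sqrt{2}}{5} \approx 3.11 < 4$, which delivers the claimed strict inequality.

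The argument is essentially bookkeeping and presents no real obstacle. The only point requiring care is the observation that Lemma \ref{alan} directly controls only the minimum degree, so the maximum degree must be accessed through the complement $G_{n,q}$; after that, one merely verifies that the resulting constant $2\sqrt{2}(1+\epsilon)$ is comfortably below $4$ for the chosen $\epsilon = 1/10$.
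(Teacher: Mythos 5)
Your proof is correct: the complement trick (the complement of $G_{n,p}$ is distributed as $G_{n,q}$, so the first bullet of Lemma \ref{alan} applied to $G_{n,q}$ turns the minimum-degree bound into an upper bound $d_n \le (n-1)p + (1+\epsilon)\sqrt{2(n-1)pq\log n}$) combined with the direct lower bound on $d_1$ yields the constant $2\sqrt{2}(1+\epsilon) = \frac{11\sqrt{2}}{5} < 4$. The paper states Corollary \ref{af} without an explicit proof, and your argument is precisely the intended derivation, including the one genuinely necessary observation that Lemma \ref{alan} controls only the minimum degree so the maximum must be reached through the complement.
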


\begin{theorem}[Theorem 12 in \cite{BB}]\label{degree}
Suppose $m \rightarrow \infty$ and $m = o(n)$. Put $q = 1-p$ and 
\begin{equation*}
K(m,n) = pn - \sqrt{2pqn \log (n/m)} + \big(\log \log (n/m) + \log 4 \pi \big) \sqrt{\frac{pqn}{8 \log (n/m)}}.
\end{equation*}
Then w.h.p.\ $G_{n,p}$ satisfies $|d_m - K(m,n)| = O\left(\sqrt{\frac{n}{\log (n/m)}}\right)$.
\end{theorem}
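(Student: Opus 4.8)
The plan is to reduce this order-statistic question about $d_m$ to a first-moment count of low-degree vertices and then to pin down $d_m$ by locating where that count crosses $m$. Write $X_{\le d}$ for the number of vertices of $G_{n,p}$ of degree at most $d$. The key observation is the deterministic equivalence: $d_m \le d$ if and only if $X_{\le d} \ge m$, so $d_m$ is essentially the smallest $d$ for which $\mathbb{E}[X_{\le d}]$ has grown past $m$. Since each individual degree is distributed as $\mathrm{Bin}(n-1,p)$ (whose mean $p(n-1)$ differs from $pn$ by $O(1)$, comfortably inside the target error), we have $\mathbb{E}[X_{\le d}] = n\,\mathbb{P}[\mathrm{Bin}(n-1,p) \le d]$, and the whole problem becomes one of inverting a binomial lower tail with enough precision.

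First I would compute the mean. Parametrize $d = pn - t\sqrt{pqn}$ with $t > 0$, since $m = o(n)$ forces $d$ to sit below the mean. The relevant deviation $t\sqrt{pqn} = \Theta(\sqrt{n\log n})$ is $o(n)$, placing us in the moderate-deviation regime where the Gaussian tail approximation is valid to the needed order; a Bahadur--Rao / local-limit refinement then gives $\mathbb{P}[\mathrm{Bin}(n-1,p)\le d] = \frac{\phi(t)}{t}(1+o(1))$, with $\phi$ the standard normal density. Setting $n$ times this equal to $m$ and taking logarithms yields, with $L := \log(n/m)$,
\begin{equation*}
\tfrac12 t^2 = L - \log t - \tfrac12\log(2\pi) + o(1).
\end{equation*}
Bootstrapping the leading behavior $t \sim \sqrt{2L}$ back into $\log t$ gives $t^2 = 2L - \log L - \log(4\pi) + o(1)$. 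Solving for $d = pn - t\sqrt{pqn}$ and expanding the square root reproduces exactly
\begin{equation*}
d = pn - \sqrt{2pqn\log(n/m)} + \big(\log\log(n/m) + \log 4\pi\big)\sqrt{\tfrac{pqn}{8\log(n/m)}} + O\!\left(\sqrt{\tfrac{n}{\log(n/m)}}\right),
\end{equation*}
which is $K(m,n)$ up to the claimed error.

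Next I would establish concentration of $X_{\le d}$ around its mean so that this threshold value of $d$ is rigid. The obstacle here is that vertex degrees are not independent. I would control this by a second-moment estimate: $\mathrm{Var}(X_{\le d}) = \sum_v \mathrm{Var}(\mathbf{1}[d_v \le d]) + \sum_{u \ne v}\mathrm{Cov}(\mathbf{1}[d_u \le d],\, \mathbf{1}[d_v \le d])$, and then show each covariance is negligible by conditioning on the status of the single edge $uv$: given that edge, $d_u$ and $d_v$ depend on disjoint sets of remaining potential edges, so they are nearly independent and the covariance is of lower order. A Chebyshev bound then confines $X_{\le d}$ to within $o(\mathbb{E}[X_{\le d}])$ of its mean with high probability.

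Finally I would combine the two ingredients. Because $\mathbb{E}[X_{\le d}]$ sweeps from well below $m$ to well above $m$ as $d$ crosses a window of width $O(\sqrt{n/\log(n/m)})$ centered at $K(m,n)$, and because $X_{\le d}$ concentrates, with high probability $X_{\le d} < m$ just below the window and $X_{\le d} > m$ just above it, forcing $d_m$ into the window. I expect the main difficulty to be the mean computation: obtaining the binomial lower tail sharply enough to recover the $\log\log(n/m)$ and $\log 4\pi$ corrections requires genuine large-deviation asymptotics in which the prefactor of $e^{-t^2/2}$ is tracked, not a crude Chernoff bound, and one must verify that the errors accrued both there and in the variance bound remain within $O(\sqrt{n/\log(n/m)})$ in degree units. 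The leading-order part of the conclusion is of course already consistent with the minimum-degree estimate in Lemma \ref{alan}.
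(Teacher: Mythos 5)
This statement is not proved in the paper at all: it is quoted verbatim as Theorem 12 of Bollob\'as \cite{BB} and used as a black box, so there is no internal proof to compare against. Your outline is, in substance, the original argument of \cite{BB}: reduce the order statistic to the count $X_{\le d}$ of low-degree vertices via the deterministic equivalence $d_m \le d \Leftrightarrow X_{\le d} \ge m$, invert the binomial lower tail with its Gaussian prefactor to locate the threshold, and rigidify it by a second-moment bound in which the covariance of two degree indicators is controlled by conditioning on the status of the single shared edge. The sketch is sound: with $t = \Theta\left(\sqrt{\log (n/m)}\right) = O(\sqrt{\log n})$ one has $t^3/\sqrt{n} \to 0$, so the moderate-deviation expansion $\mathbb{P}[\mathrm{Bin}(n-1,p) \le d] = \frac{\phi(t)}{t}(1+o(1))$ is legitimate, and your bootstrap $t^2 = 2L - \log L - \log 4\pi + o(1)$ with $L = \log(n/m)$ does reproduce $K(m,n)$; the covariance computation gives a total off-diagonal contribution $O\left((\mathbb{E}X_{\le d})^2 t^2/n\right) = o\left((\mathbb{E}X_{\le d})^2\right)$, and since $\mathbb{E}X_{\le d_\pm} \asymp m \to \infty$, Chebyshev plus monotonicity of $X_{\le d}$ in $d$ closes the sandwich. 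One remark that makes your ``main difficulty'' less delicate than you fear: an error of $O\left(\sqrt{n/\log(n/m)}\right)$ in $d$ corresponds to an additive error of only $O(1)$ in $t^2$, so the tail asymptotics are needed merely up to a bounded multiplicative factor (though still with the $\phi(t)/t$ prefactor, since the $\log t$ term is what generates the unbounded $\log\log(n/m)$ correction); the constant $\log 4\pi$ in $K(m,n)$ is itself absorbable into the error term.
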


\begin{lemma}[Corollary 15 in \cite{BB}]\label{consecutive}
If $m = o(n^{1/4})/(\log n)^{1/4}$, then w.h.p.\ $G_{n,p}$ has the property $d_1 < d_2 < ... < d_m$.
\end{lemma}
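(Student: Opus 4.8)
The plan is to prove the stronger statement that, with high probability, no two of the $m$ lowest-degree vertices of $G_{n,p}$ share the same degree; this is exactly the assertion $d_1 < d_2 < \cdots < d_m$. The natural tool is the first moment method applied to pairs of low-degree vertices, in the same spirit as the separation statement in the third bullet of Lemma \ref{alan}. Concretely, for an integer $k$ and two distinct vertices $u,v$, call $\{u,v\}$ a \emph{bad pair at level $k$} if $d(u)=d(v)=k$ and at most $m-2$ other vertices have degree strictly less than $k$. If $d_1 < \cdots < d_m$ fails, then $d_j = d_{j+1} =: k$ for some $j \le m-1$, and since at most $j-1 \le m-2$ vertices have degree below $k$ while the two vertices in positions $j,j+1$ have degree exactly $k$, a bad pair exists. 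I would therefore bound the expected number of bad pairs and show it tends to $0$.

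First I would set up the single-vertex estimates. Each degree is distributed as $\mathrm{Bin}(n-1,p)$, and although the $n$ degrees are not independent, the degrees of two fixed vertices differ from independent only through the single potential edge between them, contributing a multiplicative correction of $1+O(1/n)$; I would record this once and then treat the relevant degrees as essentially independent. Writing $p_k = \mathbb{P}[\mathrm{Bin}(n-1,p)=k]$ and $P_{<k} = \mathbb{P}[\mathrm{Bin}(n-1,p)<k]$, the expected number of bad pairs is, up to the $1+O(1/n)$ factor,
\[
\binom{n}{2}\sum_{k} p_k^2 \cdot \mathbb{P}\big[\mathrm{Bin}(n-2,P_{<k}) \le m-2\big].
\]
The extra binomial factor is what restricts attention to the bottom $m$ vertices and is essential: without it the sum is dominated by degrees near the mean and diverges.

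Next I would locate the relevant window of levels and insert sharp lower-tail asymptotics. By Theorem \ref{degree} (or directly from Lemma \ref{alan}) the bottom $m$ degrees concentrate in a short window around $K(m,n)$, so only values $k$ with $nP_{<k} = O(m)$ contribute meaningfully; outside this window the factor $\mathbb{P}[\mathrm{Bin}(n-2,P_{<k}) \le m-2]$ is superpolynomially small. Within the window I would invoke the local central limit theorem: for $k = (n-1)p - x\sqrt{(n-1)pq}$ one has $p_k \asymp \frac{1}{\sqrt{npq}} e^{-x^2/2}$ and $P_{<k} \asymp \frac{1}{x}e^{-x^2/2}$, whence $p_k \asymp P_{<k}\cdot \frac{x}{\sqrt{npq}}$. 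Since $nP_{<k}\asymp m$ forces $e^{-x^2/2}\asymp m/n$ and hence $x \asymp \sqrt{\log(n/m)}$, substituting $p_k \asymp \frac{m}{n}\cdot\frac{\sqrt{\log(n/m)}}{\sqrt{n}}$ into the display and summing the geometrically controlled contribution over the effectively $O(1)$ relevant levels yields an expected number of bad pairs of order $\frac{m^2 \log(n/m)}{n}$. Under the hypothesis $m = o\!\big(n^{1/4}/(\log n)^{1/4}\big)$ this is $o(1)$ with room to spare, and Markov's inequality finishes the proof.

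The step I expect to be the main obstacle is making the local estimates uniform and rigorous across the whole window simultaneously — in particular, controlling $p_k$ and $P_{<k}$ with explicit error terms, and verifying that the tail factor $\mathbb{P}[\mathrm{Bin}(n-2,P_{<k})\le m-2]$ decays fast enough away from the critical level that the sum over $k$ is genuinely dominated by its top end. The dependency correction is routine once isolated, but the interplay between the growing point probability $p_k$ and the decaying ``few-below'' tail factor is precisely where the threshold on $m$ is determined, and it must be tracked carefully.
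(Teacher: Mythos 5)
The first thing to say is that the paper contains no proof of this statement: Lemma \ref{consecutive} is imported verbatim as Corollary 15 of \cite{BB}, so your attempt must stand on its own; in spirit it is indeed the standard first-moment computation underlying Bollob\'as's argument. However, there is a concrete quantitative error in your final summation, namely the assertion that only ``effectively $O(1)$'' levels $k$ contribute. In the lower tail, at $k = (n-1)p - x\sqrt{(n-1)pq}$ with $x \asymp \sqrt{\log (n/m)}$, moving from $k$ to $k+1$ changes $p_k$ and $P_{<k}$ only by a factor $1 + \Theta\big(x/\sqrt{npq}\big) = 1 + \Theta\big(\sqrt{(\log n)/n}\big)$. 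Consequently the window in which $nP_{<k} \asymp m$ --- where $p_k^2$ is within a constant factor of its maximum and the factor $\mathbb{P}\big[\mathrm{Bin}(n-2,P_{<k}) \le m-2\big]$ is still of constant order --- contains $\Theta\big(\sqrt{n/\log n}\big)$ values of $k$, not $O(1)$ of them. This is consistent with Theorem \ref{degree}: $d_m - d_1 = \Theta\big(\sqrt{n/\log n}\,\log m\big)$, so the bottom $m$ degrees are spread over many levels. Summing $p_k^2$ over the window therefore multiplies your estimate $m^2\log(n/m)/n$ by $\Theta\big(\sqrt{n/\log n}\big)$, and the expected number of bad pairs is in fact of order $m^2\sqrt{(\log n)/n}$ (here $\log(n/m) = \Theta(\log n)$ since $m \le n^{1/4}$).

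This correction does not kill the proof, but it removes all the slack: $m^2\sqrt{(\log n)/n} = o(1)$ holds precisely when $m = o\big(n^{1/4}/(\log n)^{1/4}\big)$, so the lemma's hypothesis is the exact threshold of the first-moment method, not a condition met ``with room to spare.'' Taken at face value, your count would prove distinctness of the bottom $m$ degrees for $m$ nearly up to $\sqrt{n/\log n}$, which is false --- for $m$ well above $n^{1/4}$ repeated low degrees appear w.h.p., which is exactly why Bollob\'as's cutoff sits where it does. A secondary gap: the replacement of ``the number of other vertices of degree $< k$'' by a $\mathrm{Bin}(n-2,P_{<k})$ variable independent of the event $\{d(u)=d(v)=k\}$ is not covered by the single-edge correction you record, since every other vertex shares its potential edges to $u$ and $v$ with the degrees of $u$ and $v$, and even after exposing all edges at $u$ and $v$ the residual degrees of the remaining vertices are mutually dependent. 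One then needs a genuine lower-tail bound for the count of low-degree vertices (concentration of that count, or the joint estimates carried out in \cite{BB}); this is standard but not automatic. Still, the level count is the step that actually determines the answer, and it is the one your sketch gets wrong.
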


\begin{lemma}[Lemma 18 in \cite{BB}]\label{sim}
For any $\epsilon > 0$, w.h.p.\ $G_{n,p}$ has the property that whenever $m > n^\epsilon$ every subgraph of order $m$ has $p \binom{m}{2} + o(m^2)$ edges.
\end{lemma}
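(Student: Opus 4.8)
The plan is to establish this uniformity statement by a union bound over all vertex subsets, where for each fixed subset the edge count concentrates sharply around its mean via a Chernoff bound. The essential leverage is that the hypothesis $m > n^\epsilon$ forces $m$ to grow polynomially in $n$, hence much faster than the entropy cost $\log\binom{n}{m}$, which is only of order $m\log n$.

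First I would fix a set $S$ of $m$ vertices and observe that the number of edges $e(G[S])$ inside $S$ is a sum of $\binom{m}{2}$ independent Bernoulli($p$) variables, so $e(G[S]) \sim \mathrm{Bin}\!\left(\binom{m}{2},p\right)$ with mean $\mu = p\binom{m}{2}$. A standard two-sided Chernoff bound then gives, for a deviation parameter $0 < t \le \mu$,
\[
\mathbb{P}\!\left[\,\Big|e(G[S]) - p\tbinom{m}{2}\Big| \ge t\,\right] \le 2\exp\!\left(-\frac{t^2}{3p\binom{m}{2}}\right).
\]
Next I would take a union bound over all $\binom{n}{m} \le (en/m)^m \le \exp(m\log n)$ choices of $S$, and then over the at most $n$ relevant values of $m$ in the range $n^\epsilon \le m \le n$. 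Choosing the deviation $t = t(m) := C\, m^{3/2}\sqrt{\log m}$ for a suitably large constant $C = C(\epsilon,p)$, the Chernoff exponent becomes of order $C^2 m\log m / p$, while the entropy term is bounded by $m\log n \le \tfrac{1}{\epsilon}\,m\log m$, using $n \le m^{1/\epsilon}$ (which is precisely the content of $m > n^\epsilon$). Taking $C$ large enough that the Chernoff exponent dominates the entropy term, each failure probability becomes at most $\exp\!\left(-\tfrac{1}{\epsilon}\,m\log m\right)$; summing over the $n$ values of $m$ (the sum is dominated by the smallest value $m \approx n^\epsilon$) yields a total failure probability of $o(1)$. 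Since $t(m) = C\,m^{3/2}\sqrt{\log m} = o(m^2)$, this is exactly the desired conclusion.

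The crux of the argument — and the only place where genuine care is required — is the competition between the entropy term $m\log n$ coming from the number of subsets and the Chernoff exponent coming from concentration. This is exactly where the threshold $m > n^\epsilon$ is indispensable: it converts $\log n$ into $O(\log m)$, so that the allowed deviation $t$ can be taken well below $m^2$ (around $m^{3/2}\sqrt{\log m}$) while the concentration bound still overwhelms the count of subsets. I would note that a cruder choice such as $t = \eta m^2$ for a fixed $\eta > 0$ would also win the union bound, and would already establish the statement in the equivalent form ``for every $\eta > 0$, w.h.p.\ every order-$m$ subgraph satisfies $|e(G[S]) - p\binom{m}{2}| < \eta m^2$''; the finer choice above has the advantage of producing a single explicit $o(m^2)$ error term valid uniformly over all $m > n^\epsilon$.
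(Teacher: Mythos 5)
Your argument is correct, and the one condition you leave implicit — the Chernoff requirement $t \le \mu$ — does hold, since $t(m)/\mu \asymp C\sqrt{\log m}/(p\sqrt{m}) \to 0$ uniformly over $m > n^\epsilon$. However, there is no in-paper proof to compare against: the paper imports this statement verbatim as Lemma 18 of Bollob\'as \cite{BB} and never proves it. Your Chernoff-plus-union-bound argument is the standard proof of such a uniformity statement, and it is the same technique the paper itself deploys for its Lemma \ref{chernoff} (and, in a sharper form via Talagrand's inequality, for Lemma \ref{concentration}). You correctly isolate the crux: the entropy cost $\log \binom{n}{m} = O(m \log n)$ must be beaten by the concentration exponent $\Theta(C^2 m \log m)$, and the hypothesis $m > n^\epsilon$ is exactly what converts $\log n$ into $\epsilon^{-1} \log m$ so that a deviation of size $o(m^2)$ (indeed $m^{3/2}\sqrt{\log m}$) survives the union bound over subsets and over the at most $n$ values of $m$. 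One reading caveat worth making explicit: ``subgraph of order $m$'' in the lemma must mean \emph{induced} subgraph (as you assume via $e(G[S])$, and as the paper uses it through $e(A)$); for arbitrary subgraphs the lower-bound half of the statement would be false, since one can delete edges.
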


\begin{corollary}[Consequence of Lemma \ref{sim}]\label{bip}
For any $\epsilon > 0$, w.h.p.\ $G_{n,p}$ has the property that whenever $m \geq l > n^\epsilon$, $A \in \binom{V(G_{n,p})}{m}$, $B \in \binom{V(G_{n,p})}{l}$, and $A \cap B = \emptyset$, then the number of edges with one endpoint in $A$ and another in $B$ is $p ml + o(m^2)$. 
\end{corollary}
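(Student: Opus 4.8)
The plan is to derive Corollary \ref{bip} directly from Lemma \ref{sim} by an inclusion--exclusion identity for edge counts, carried out on the single high-probability event furnished by that lemma. Writing $e(S)$ for the number of edges induced by a vertex set $S$, and using that $A$ and $B$ are disjoint, every edge inside $A \cup B$ lies either within $A$, within $B$, or across the two parts, so the number of crossing edges is exactly
\[
e(A,B) = e(A \cup B) - e(A) - e(B).
\]
The sets $A \cup B$, $A$, $B$ have orders $m+l$, $m$, $l$, and each exceeds $n^\epsilon$ because $m \ge l > n^\epsilon$; hence on the event from Lemma \ref{sim} I may replace $e(S)$ by $p\binom{|S|}{2} + o(|S|^2)$ in all three terms.

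First I would substitute these three estimates to obtain
\[
e(A,B) = p\left[\binom{m+l}{2} - \binom{m}{2} - \binom{l}{2}\right] + o\big((m+l)^2\big) + o(m^2) + o(l^2).
\]
A short expansion of the binomial coefficients gives the clean identity $\binom{m+l}{2} - \binom{m}{2} - \binom{l}{2} = ml$, so the main term is precisely $pml$, matching the claim.

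To control the error, the key point is the hypothesis $l \le m$: it forces $m+l \le 2m$, so each of $o((m+l)^2)$, $o(m^2)$, and $o(l^2)$ is $o(m^2)$, and their sum remains $o(m^2)$. Combining this with the previous display yields $e(A,B) = pml + o(m^2)$, as required.

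I do not expect a genuine obstacle here, since the argument is essentially mechanical. The only subtlety worth flagging is uniformity of the $o(\cdot)$ estimate: I am invoking Lemma \ref{sim} for three different sets at once, which is legitimate because that lemma provides a single event, holding with high probability, on which the edge estimate is valid simultaneously for \emph{every} subgraph of order larger than $n^\epsilon$. Thus it applies in particular to $A$, $B$, and $A \cup B$, all of admissible order, and the reduction of every error term to $o(m^2)$ uses nothing beyond $l \le m$.
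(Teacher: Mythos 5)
Your proof is correct and follows essentially the same route as the paper: both use the identity $e(A,B) = e(A \cup B) - e(A) - e(B)$ for disjoint $A$, $B$ and then apply Lemma \ref{sim} simultaneously to $A$, $B$, and $A \cup B$. Your additional remarks on the binomial identity $\binom{m+l}{2} - \binom{m}{2} - \binom{l}{2} = ml$, the reduction of all error terms to $o(m^2)$ via $l \le m$, and the uniformity of the high-probability event are exactly the details the paper leaves implicit in its proof sketch.
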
 

\begin{proof}[Proof Sketch]
The above corollary is an easy consequence of Lemma \ref{sim}. If $A$ and $B$ are disjoint vertex subsets of $G_{n,p}$, then the number of edges with one endpoint in $A$ and another in $B$ is exactly $e(A \cup B) - e(A) - e(B)$, where $e(V)$ denotes the number of edges in the induced subgraph on $V$. Now apply Lemma \ref{sim} on $A$, $B$, and $A\cup B$ to conclude Corollary \ref{bip}.
\end{proof}

\noindent \textbf{Chernoff Bound.}
We will refer to the following Chernoff-type bound (see \cite{H}) for the tail of the binomial distribution. Let $X$ be the sum of $n$ independent indicator random variables $X_i$ with $\mathbb{P}[X_i = 1] = p$, i.e. $X = \sum_{i=1}^n X_i$. Then for any $\epsilon > 0$,
\begin{equation}
\mathbb{P}[X \leq (1 - \epsilon)np] \leq e^{-\frac{\epsilon^2 n p}{2}}.\label{chern}
\end{equation} 

\begin{lemma}\label{chernoff}
For any $\epsilon > 0$, w.h.p.\ $G_{n,p}$ has the property that for all $A \subset V(G_{n,p})$ with $0 < |A| < n$, there exists $v \in V(G_{n,p}) \setminus A$ such that the number of neighbors of $v$ in $A$ is more than $(1 - \epsilon) p |A|$. %$|N_A(v)| > (1 - \epsilon) p |A|$.
\end{lemma}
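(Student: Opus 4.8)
The plan is to combine the Chernoff bound \eqref{chern} with a union bound over all subsets $A$. The key structural observation is that for a fixed $A$, the potential edges joining distinct vertices outside $A$ to $A$ are disjoint, so the relevant events are mutually independent across those vertices, and we can afford to raise a single-vertex failure probability to the power $n - |A|$ before paying the union-bound cost of $\binom{n}{|A|}$.

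First I would fix $\epsilon > 0$ and a set $A$ with $|A| = k$ and $0 < k < n$. For each $v \in V(G_{n,p}) \setminus A$, the number of neighbors of $v$ in $A$, call it $X_v$, is distributed as $\mathrm{Bin}(k,p)$, so by \eqref{chern} we have $\mathbb{P}[X_v \le (1-\epsilon)pk] \le e^{-\epsilon^2 pk/2}$. Since the neighborhoods of distinct vertices $v \notin A$ into $A$ depend on disjoint sets of potential edges, the events $\{X_v \le (1-\epsilon)pk\}$ are independent over the $n-k$ choices of $v$. Hence the probability that no vertex outside $A$ has more than $(1-\epsilon)pk$ neighbors in $A$ is at most $e^{-\epsilon^2 pk(n-k)/2}$.

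Next I would take a union bound over all $A$. Since there are $\binom{n}{k}$ sets of size $k$, the probability that the desired property fails for some $A$ is at most $\sum_{k=1}^{n-1} \binom{n}{k} e^{-\epsilon^2 pk(n-k)/2}$. To see that this tends to $0$, I would split the sum at $k = n/2$. For $k \le n/2$ we have $n-k \ge n/2$, hence $k(n-k) \ge kn/2$, and together with $\binom{n}{k} \le n^k$ each term is at most $\exp\big(k(\log n - \epsilon^2 pn/4)\big)$; for large $n$ the bracketed quantity is negative and bounded away from $0$, so these terms sum to something geometrically small in $n$. The range $k \ge n/2$ is handled symmetrically using $\binom{n}{k} = \binom{n}{n-k}$. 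Summing both ranges shows the total failure probability tends to $0$, which gives the claim with high probability.

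The argument is essentially routine once the independence across vertices outside $A$ is noted; the only place requiring a little care is checking that the union bound survives uniformly in $k$, in particular at the extremes $k=1$ and $k=n-1$. These are exactly the cases where one of the two factors $\binom{n}{k}$ or $e^{-\epsilon^2 pk(n-k)/2}$ is large, but in each case the other factor is correspondingly small (for instance $\binom{n}{1} = \binom{n}{n-1} = n$ against a factor exponentially small in $n$), so no single term dominates. I expect this bookkeeping, rather than any conceptual difficulty, to be the main point to get right.
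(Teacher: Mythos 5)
Your proof is correct, and it is a genuine (if small) variation on the paper's argument rather than a reproduction of it. Both proofs run the Chernoff bound \eqref{chern} against a union bound over subsets, and both arrive at exactly the same per-set failure probability $e^{-\epsilon^2 p k(n-k)/2}$, but by different mechanisms. The paper applies Chernoff once per set, to the single binomial variable counting all $k(n-k)$ potential edges of the cut $(A, V \setminus A)$; it proves the stronger statement that w.h.p.\ every cut with $|A| \le n/2$ carries more than $(1-\epsilon)p|A|(n-|A|)$ edges, and then extracts the desired vertex by averaging over the $n-|A|$ outside vertices, the restriction to $|A| \le n/2$ being harmless because the cut statement is symmetric in $A$ and its complement. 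You instead apply Chernoff to each $X_v \sim \mathrm{Bin}(k,p)$ individually and multiply the $n-k$ failure probabilities using the independence of the $X_v$ across $v \notin A$ (a step the paper never invokes, since averaging substitutes for it), then run the union bound directly over all $1 \le k \le n-1$, handling $k > n/2$ via $j = n-k$. The two routes coincide numerically because the Chernoff exponent is linear in the number of trials, so $\left(e^{-\epsilon^2 pk/2}\right)^{n-k} = e^{-\epsilon^2 pk(n-k)/2}$; note also that your failure event (every outside vertex has small degree into $A$) implies the paper's (the whole cut is small), so the paper's bound would cover your event as well. What each buys: the paper's version yields a slightly stronger intermediate fact about all cuts, while yours avoids both the averaging step and the reduction to $k \le n/2$, at the cost of the independence observation. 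Your union-bound bookkeeping, including the endpoint cases $k=1$ and $k=n-1$ and the split at $k = n/2$ with $\binom{n}{k} \le n^{\min(k,n-k)}$, is sound.
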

\begin{proof}
It is enough to prove that w.h.p.\ for all $A \in V(G_{n,p})$ with $1 \leq |A| \leq \frac{n}{2}$, the number of edges between $A$ and $V(G_{n,p}) \setminus A$ is more than $(1 - \epsilon) p |A| (n-|A|)$. Fix $A \subset V(G_{n,p})$ with $1 \leq |A| \leq \frac{n}{2}$, and let $m = |A|$. Now by the Chernoff Bound \eqref{chern}, the probability that there are less than $(1 - \epsilon) p m (n-m)$ edges between $A$ and $V(G_{n,p}) \setminus A$ is less than $e^{-\frac{m (n-m) \epsilon^2 p}{2}}$. So, the probability that there exists $A$ such that $1 \leq |A| \leq \frac{n}{2}$ and the number of edges between $A$ and $V(G_{n,p}) \setminus A$ is less than $(1 - \epsilon) p |A| (n-|A|)$ is less than 
$$\sum_{m=1}^{\floor{\frac{n}{2}}} \binom{n}{m} e^{-\frac{m(n-m)\epsilon^2p}{2}} \leq \sum_{m=1}^{\floor{\frac{n}{2}}} \left(\frac{ne}{m} \cdot e^{-\frac{n\epsilon^2p}{4}}\right)^m = o(1).$$
\end{proof}

The next two lemmas are not tight, but give us what will suffice to prove our results.

\begin{lemma}\label{gap}
Let the degree sequence of $G_{n,p}$ be $d_1 \le d_2 \le ... \le d_n$. Then there exists $\epsilon>0$ such that w.h.p.\ for all $2 \le k \le \epsilon \sqrt{n \log n}$, $\sum_{i = 1}^k (d_i - d_1) > \binom{k}{2}$.
\end{lemma}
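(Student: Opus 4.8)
The plan is to exploit the fact that the smallest degrees of $G_{n,p}$ are spread out: the standard estimates give $d_i - d_1 \approx \sqrt{2(n-1)pq}\,(\sqrt{\log n}-\sqrt{\log(n/i)})$, so $\sum_{i=1}^k(d_i-d_1)$ grows like $\sqrt{n/\log n}\cdot k\log k$, which dominates $\binom{k}{2}\sim k^2/2$ as long as $k\le\epsilon\sqrt{n\log n}$ with $\epsilon$ small. To make this clean and uniform in $k$, I would not sum the $d_i$ individually but discard the bottom half of each partial sum and use monotonicity of the sorted degrees: for every $k$,
$$\sum_{i=1}^k (d_i - d_1) \;\ge\; \sum_{i=\lceil k/2\rceil}^{k}(d_i-d_1) \;\ge\; \frac{k}{2}\,\bigl(d_{\lceil k/2\rceil}-d_1\bigr).$$
Hence it suffices to prove that w.h.p., for all $k$ in the range, $d_{\lceil k/2\rceil}-d_1 \ge k$ with a little room to spare, which yields the sum $\ge \tfrac{k}{2}\cdot k > \binom{k}{2}$.

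I would split the range of $k$ into two overlapping regimes. For small $k$, say $2 \le k \le c\,n^{2\epsilon_0(1-\epsilon_0)}$ with $\epsilon_0 = 1/10$ (the constant from Lemma \ref{alan}, renamed to avoid clashing with the $\epsilon$ in the statement), I would use Lemma \ref{alan} directly: there are $\Omega(n^{2\epsilon_0(1-\epsilon_0)})$ vertices of degree at most $d_+$, and no two vertices of degree at most $d_+$ have degrees within $10$ of each other. Thus in this range the $k$ smallest degrees $d_1 < d_2 < \cdots < d_k$ (all at most $d_+$) increase by more than $10$ at each step, so $d_i - d_1 > 10(i-1)$ and $\sum_{i=1}^k(d_i-d_1) > 10\binom{k}{2} > \binom{k}{2}$ with enormous slack.

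For the larger regime $c\,n^{2\epsilon_0(1-\epsilon_0)} \le k \le \epsilon\sqrt{n\log n}$, I would feed Theorem \ref{degree} into the half-sum inequality. The essential point is to upper bound $d_1$ tightly: the bound $d_1 \le d_+$ from Lemma \ref{alan} is too lossy (its factor $(1-\epsilon_0)$ already makes $d_{\lceil k/2\rceil}-d_1$ negative at the bottom of this regime), so instead I would use $d_1 \le d_m$ with a slowly growing $m$, say $m = \lceil\log n\rceil$, and apply Theorem \ref{degree} to obtain $d_1 \le (n-1)p - (1-o(1))\sqrt{2(n-1)pq\log n}$. Applying Theorem \ref{degree} again at $m=\lceil k/2\rceil$ gives $d_{\lceil k/2\rceil} \ge (n-1)p - (1+o(1))\sqrt{2(n-1)pq\log(n/k)}$, and subtracting, together with $\sqrt{\log n}-\sqrt{\log(n/k)}\ge \tfrac{\log k}{2\sqrt{\log n}}$, yields
$$d_{\lceil k/2\rceil}-d_1 \;\ge\; (1-o(1))\sqrt{2(n-1)pq}\cdot\frac{\log k}{2\sqrt{\log n}}.$$
Since $k\ge c\,n^{2\epsilon_0(1-\epsilon_0)}$ forces $\log k = \Theta(\log n)$, the right-hand side is $\Omega\!\bigl(\sqrt{n/\log n}\cdot\log k\bigr)$; the worst case is $k=\epsilon\sqrt{n\log n}$, where this reads $\Omega(\sqrt{n\log n}) > \epsilon\sqrt{n\log n}\ge k$ once $\epsilon$ is small enough. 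This closes the bound for large $k$, and the two regimes overlap, covering all $2\le k\le\epsilon\sqrt{n\log n}$.

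The main obstacle is \emph{uniformity}: Theorem \ref{degree} is a statement about a single $m$, whereas I need the estimate for $d_{\lceil k/2\rceil}$ simultaneously for all $k$ in the large regime. I would handle this by a union bound over a geometric sequence of checkpoints $m\in\{2^j\}$ (only $O(\log n)$ of them up to $\epsilon\sqrt{n\log n}$) together with the monotonicity $d_1\le d_2\le\cdots$: for $k$ between two consecutive checkpoints, lower bound $d_{\lceil k/2\rceil}$ by the checkpoint just below, losing only a harmless factor inside $\log(n/k)$. The delicate point is whether the per-checkpoint failure probabilities in Theorem \ref{degree} are small enough to survive the union bound. If the black-box ``w.h.p.'' is not quantitatively strong enough, I would instead establish the two required one-sided tail bounds directly for the counts $Z_t = \#\{v : d_G(v)\le t\}$: a first-moment/Markov estimate to show $Z_t$ is small (hence $d_{\lceil k/2\rceil}$ is large), and a second-moment estimate producing a vertex of degree at most $(n-1)p-(1-o(1))\sqrt{2(n-1)pq\log n}$ (hence $d_1$ is small). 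Both are in the spirit of the Chernoff argument already used in Lemma \ref{chernoff}.
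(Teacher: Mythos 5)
Your proposal is correct in substance and follows the same skeleton as the paper's proof: split at a polynomial threshold, handle small $k$ via gaps between the lowest degrees, and handle large $k$ by feeding Theorem \ref{degree} into the monotonicity bound $\sum_{i=1}^k (d_i-d_1)\ge\frac{k}{2}\left(d_{m}-d_1\right)$. In the small regime your route is a genuine (and arguably slicker) variant: the paper uses Lemma \ref{consecutive} to get $d_1<d_2<\cdots<d_{3n^{1/5}}$ and then needs $d_2-d_1\ge 2$ from Lemma \ref{alan} to beat the borderline equality $\sum_{i=1}^k(i-1)=\binom{k}{2}$, whereas your use of the third bullet of Lemma \ref{alan} (pairwise gaps exceeding $10$ among degrees at most $d_+$) gives $\sum_{i=1}^k(d_i-d_1)>10\binom{k}{2}$ with room to spare; just note you must take $c$ below the implicit constant in the $\Omega\left(n^{2\epsilon_0(1-\epsilon_0)}\right)$ count so that $d_k\le d_+$ throughout that range.

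The real divergence is in the large regime, where you apply Theorem \ref{degree} at the $k$-dependent index $\lceil k/2\rceil$ and are consequently forced into the checkpoint/union-bound discussion. As you yourself flag, the bare ``w.h.p.'' of Theorem \ref{degree} does not supply the $o(1/\log n)$ per-event failure probability that even $O(\log n)$ checkpoints require, so your main line is incomplete as written and only the fallback (reproving tails of $Z_t$ by moment estimates) would close it. The paper avoids this entirely, and the observation that makes it possible is one you narrowly missed: the target is uniform, since you only ever need $d_{\lceil k/2\rceil}-d_1\gtrsim \epsilon\sqrt{n\log n}\ge k$, so a \emph{single fixed} checkpoint at the bottom of the large regime dominates every $k$ at once. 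Concretely, the paper invokes Theorem \ref{degree} at just the two fixed indices $m=n^{1/5}$ and $\sqrt{m}=n^{1/10}$, obtaining $d_m-d_1\ge d_m-d_{\sqrt m}\ge\frac{\sqrt{9}-\sqrt{8}}{\sqrt{10}}\sqrt{2pqn\log n}-O\left(\sqrt{n}\right)$, and then for every $k>3n^{1/5}$ uses that at least half of the indices $i\in[k]$ satisfy $d_i\ge d_m$, so $\sum_{i=1}^k(d_i-d_1)>\frac{k}{2}(d_m-d_1)\ge\frac{k}{2}\epsilon\sqrt{n\log n}>\binom{k}{2}$. Your per-$k$ refinement via $\log k=\Theta(\log n)$ buys nothing here; replacing your application at $m=\lceil k/2\rceil$ by one application at a fixed $m_0$ (e.g.\ $n^{1/5}$, or the bottom of your own large regime) turns your proposal into a complete proof with no union bound and no appeal to unstated failure probabilities.
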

\begin{proof}
Let $m = n^{\frac{1}{5}}$. By Lemma \ref{consecutive}, w.h.p.\ $d_1 < d_2 < d_3 < ... < d_{3m}$. Note that we should have used $\floor{3m}$ or $\ceil{3m}$ instead of $3m$ but it is true in both cases, hence for asymptotic results we omit rounding when it is not essential. From Lemma \ref{alan}, it can be seen that w.h.p.\ $d_2 - d_1 \ge 2$. Note that $\binom{k}{2} = \sum_{i=1}^{k-1} i$, hence we have proven the statement of Lemma \ref{gap} for $k \le 3m$. 
Now using Theorem \ref{degree}, we have the following w.h.p., 
\begin{align*}
d_m - d_1 &\ge d_m - d_{\sqrt{m}} \\
&\ge pn - \sqrt{2pqn \log (n/n^{1/5})} - pn + \sqrt{2pqn \log (n/n^{1/10})} - O\left(\sqrt{n}\right) \\
&\ge \frac{\sqrt{9} - \sqrt{8}}{\sqrt{10}} \sqrt{2pqn \log n} - O\left(\sqrt{n}\right).
\end{align*} 
Fix a constant $0< \epsilon < \frac{\sqrt{9} - \sqrt{8}}{\sqrt{10}} \cdot \sqrt{2pq}$. Now for $3m < k < \epsilon \sqrt{n \log n}$, at least half of the $i \in [k]$ have $d_i \ge d_m$, so $\sum_{i = 1}^k (d_i - d_1) > \frac{k}{2}(d_m - d_1) \ge \frac{k}{2} \epsilon \sqrt{n \log n} > \binom{k}{2}$.
\end{proof}

\begin{lemma}\label{gap1}
Let the degree sequence of $G_{n,p}$ be $d_1 \le d_2 \le ... \le d_n$. Then w.h.p.\ for all $n^{\frac{1}{2}} < k < \frac{1}{2} n^{\frac{1}{2}} \log n$, $\sum_{i = 1}^k (d_{2k} - d_i) = o(k^2)$.
\end{lemma}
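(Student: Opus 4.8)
I need to show that with high probability, for all $k$ in the range $n^{1/2} < k < \frac{1}{2} n^{1/2} \log n$, the sum $\sum_{i=1}^k (d_{2k} - d_i)$ is $o(k^2)$. Let me think about the structure here.

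The quantity $\sum_{i=1}^k(d_{2k}-d_i)$ measures how much the $k$ smallest degrees fall below the threshold value $d_{2k}$. Since all terms are nonnegative (because $d_i \le d_k \le d_{2k}$ for $i \le k$), I can bound each term by $d_{2k}-d_1$, giving the crude bound $\sum_{i=1}^k(d_{2k}-d_i) \le k(d_{2k}-d_1)$.

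The plan is to use Theorem \ref{degree} to control the degree gap $d_{2k}-d_1$ across this entire range of $k$. The paper's setup has $m=o(n)$, and for $k$ in the stated range, both $k$ and $2k$ are $o(n)$ while still tending to infinity, so Theorem \ref{degree} applies to give $d_k \approx K(k,n)$ and $d_{2k} \approx K(2k,n)$ up to an error of $O(\sqrt{n/\log(n/k)})$.

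Let me see what gap this produces. I'll estimate $d_1$ using Lemma \ref{alan} (which gives $\delta(G_{n,p}) \ge d_-$, so $d_1 = pn - \Theta(\sqrt{n\log n})$), and estimate $d_{2k}$ via $K(2k,n)$. The leading terms $pn$ cancel, and the difference $d_{2k}-d_1$ is governed by the difference of the square-root terms $\sqrt{2pqn\log(n/m)}$ evaluated at the relevant scales, plus the error terms. For $k$ in the range $(n^{1/2}, \frac{1}{2}n^{1/2}\log n)$, we have $\log(n/k)$ of order $\log n$ (up to lower-order corrections, since $k \le n^{1/2}\mathrm{polylog}$ means $\log(n/k) = (\tfrac12 + o(1))\log n$), so $\sqrt{2pqn\log(n/k)} = \Theta(\sqrt{n\log n})$. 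Therefore I expect $d_{2k}-d_1 = O(\sqrt{n\log n})$, which yields $\sum_{i=1}^k(d_{2k}-d_i) \le k \cdot O(\sqrt{n\log n})$. To conclude this is $o(k^2)$, I need $O(\sqrt{n\log n}) = o(k)$, i.e.\ $k$ much larger than $\sqrt{n\log n}$ — but the range only guarantees $k > n^{1/2}$, which is \emph{not} enough: at $k \approx n^{1/2}$ the bound $k\cdot\sqrt{n\log n} \approx n\sqrt{\log n}$ vastly exceeds $k^2 \approx n$. So the crude ``worst term times $k$'' estimate fails at the bottom of the range, and this is the main obstacle.

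The fix, and the heart of the argument, is that one must \emph{not} bound every one of the $k$ terms by the full gap $d_{2k}-d_1$; instead I should exploit that the individual gaps $d_{2k}-d_i$ shrink rapidly as $i$ grows, so most of the $k$ terms are far smaller than the maximal gap. The plan is to split the sum dyadically: group the indices $i$ into blocks $[2^{j-1}, 2^j)$ and, for each block, bound $d_{2k}-d_i \le d_{2k}-d_{2^{j-1}}$ using Theorem \ref{degree} applied at scale $2^{j-1}$. Within a block the gap $d_{2k}-d_{2^{j-1}} \approx K(2k,n)-K(2^{j-1},n)$ is controlled by the derivative of $K(\cdot,n)$, which is $\Theta(\sqrt{n}/\sqrt{\log n}\cdot \tfrac{1}{m})$-scale small, so that summing the block contributions and using $\sum_{i\le k} 1/\sqrt{\log(n/i)}$-type telescoping collapses to a total of order $k \cdot \sqrt{n/\log n} \cdot (\text{small factor})$. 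Combined with the error terms $O(\sqrt{n/\log(n/k)})$ summed over $O(\log k)$ dyadic blocks, this gives a total of $o(k^2)$ uniformly across the range, since for $k > n^{1/2}$ one has $k^2 > n$ and the accumulated bound grows only like $k\sqrt{n/\log n}\cdot\mathrm{polylog}$ with the polylog beaten by the $\log n$ width of the range. The delicate point, which I expect to require the most care, is making the application of Theorem \ref{degree} \emph{uniform} over all $k$ and all dyadic scales simultaneously (the theorem is stated for a single $m\to\infty$), so I would either invoke it at the $O(\log n)$ dyadic scales and take a union bound, or quote a uniform version of the degree-sequence concentration, and then verify the final arithmetic that the summed gaps are $o(k^2)$ throughout the interval.
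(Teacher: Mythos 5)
Your plan is correct in its essentials and rests on the same two pillars as the paper's proof: the few smallest-degree vertices are handled crudely via the maximum degree spread (Corollary \ref{af}), while for the remaining indices one uses Theorem \ref{degree} to show that degrees whose ranks lie in the window from $\sqrt{n}/\log n$ to $\sqrt{n}\log n$ differ only by $o(\sqrt{n})$, because $\log(n/m)$ moves by only $\Theta(\log\log n)$ across that window. Where you differ is granularity: the paper needs no dyadic decomposition at all. It bounds $d_{2k}\le d_{\sqrt{n}\log n}$ for every $k$ in the range simultaneously, splits the sum at the single cut point $i=\sqrt{n}/\log n$, bounds each of the first $\sqrt{n}/\log n$ terms by $4\sqrt{pqn\log n}$ (total $o(n)=o(k^2)$ since $k>\sqrt{n}$), and bounds each remaining term by the one gap $d_{\sqrt{n}\log n}-d_{\sqrt{n}/\log n}=o(\sqrt{n})$, giving $k\cdot o(\sqrt{n})=o(k^2)$; Theorem \ref{degree} is thus invoked at exactly two fixed scales, $m=\sqrt{n}\log n$ and $m=\sqrt{n}/\log n$, so the uniformity issue you flag as the delicate point never arises, and your proposed union bound over $O(\log n)$ dyadic scales is unnecessary machinery. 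Two soft spots in your version as stated, both fixable. First, Theorem \ref{degree} requires $m\to\infty$, so it cannot be ``applied at scale $2^{j-1}$'' for the bottom blocks where $2^{j-1}=O(1)$, and a union bound does not cure this; those blocks must be absorbed into the crude Corollary \ref{af} estimate, exactly as the paper's first chunk does. Second, your closing arithmetic --- the polylog being ``beaten by the $\log n$ width of the range'' --- is not a valid step, since the bound must hold pointwise for every $k$, including $k$ just above $\sqrt{n}$, where an accumulated factor $k\sqrt{n/\log n}\cdot(\log n)^{c}$ with $c\ge \tfrac12$ already exceeds $k^2$; what actually saves your estimate is that the factor produced by the telescoping is only $O(\log\log n)$ (indeed $O(1)$ after summing $2^{j-1}\log(2k/2^{j-1})$ over blocks), so the total is $O\bigl(k\sqrt{n}\log\log n/\sqrt{\log n}\bigr)=o(k\sqrt{n})=o(k^2)$.
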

\begin{proof}
\begin{align}
\sum_{i = 1}^k d_{2k} - d_i 
&\le \sum_{i = 1}^{\frac{\sqrt{n}}{\log n}} (d_{\sqrt{n} \log n} - d_i) + \sum_{i = \frac{\sqrt{n}}{\log n}}^k (d_{\sqrt{n} \log n} - d_i)\nonumber
\\&\le \frac{\sqrt{n}}{\log n} \cdot 4 \sqrt{pqn \log n} + k (d_{\sqrt{n} \log n} - d_{\frac{\sqrt{n}}{\log n}})\label{c}
\\&\le o(n) + k\Bigg(\sqrt{2pqn \log \bigg(\frac{n \log n}{\sqrt{n}}\bigg)} - \sqrt{2pqn \log \bigg(\frac{n}{\sqrt{n} \log n}\bigg)} + o(\sqrt{n})\Bigg)\label{deg}
\\& = o(k^2).\nonumber
\end{align}

In the above calculation, in step \eqref{c}, we have used the concentration of the degree sequence of $G_{n,p}$ (Corollary \ref{af}), and in step \eqref{deg}, we have used Theorem \ref{degree}.
\end{proof}

Now we will switch to a natural problem of independent interest that turns out to be very useful to our problem. We need to use and build upon a couple of definitions from Section 3.1 to describe the problem.

\begin{definition}\label{dense}
\mbox{}
\begin{itemize}
\item For a graph $G$ and a subset $A$ of its vertex set, let $e(A)$ denote the number of edges in the induced subgraph on $A$.
\item For a graph $G$, $e_G(k)$ is defined to be the maximum value of $e(A)$ among all $k$-vertex subsets $A \subseteq V(G)$. When $G$ is clear from the context, we will write $e(k)$ instead of $e_G(k)$. In all random graph situations in this paper, $p$ will be a fixed constant as $n$ grows. With $p$ being understood as given, we will write $e_n(k)$ to denote the random variable $e_{G_{n,p}}(k)$. %= \max_{A \in \binom{V(G_{n,p})}{k}} e(A)$. 
\end{itemize}
\end{definition}

Our aim is to obtain a concentration-type result for $e_n(k)$, where the following inequality will become handy. This version of Talagrand's Inequality can be found in Section 21.8 of \cite{FK}.
\begin{theorem}[Talagrand's Inequality]\label{talagrand}
Let $\Omega = \prod_{i=1}^n \Omega_i$, where each $\Omega_i$ is a probability space and $\Omega$ has the product measure. Let $\A \subseteq \Omega$ and let $x = (x_1,x_2,...,x_n) \in \Omega$.
\\For $\alpha = (\alpha_1,\alpha_2,...,\alpha_n) \in \mathbb{R}^n$ we let 
$$d_\alpha(\A,x) = \inf_{y \in \A} \sum_{i : y_i \neq x_i} \alpha_i.$$
Then we define $$\rho(\A,x) = \sup_{|\alpha| = 1} d_\alpha(\A,x),$$
\\where $|\alpha|$ denotes the Euclidean norm $\sqrt{{\alpha_1}^2 + {\alpha_2}^2 + ... + {\alpha_n}^2}$.
\\We then define, for $t \ge 0$, $$\A_t = \{x \in \Omega : \rho(\A,x) \le t\}.$$
\\Then $$\mathbb{P}[\A](1 - \mathbb{P}[\A_t]) \le e^{-\frac{t^2}{4}}.$$
\end{theorem}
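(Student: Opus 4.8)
The plan is to reduce the stated concentration form to a single exponential-moment inequality and then prove that inequality by induction on the number of coordinates $n$. First I would apply Markov's inequality to the nonnegative variable $e^{\rho(\A,X)^2/4}$: since $1-\mathbb{P}[\A_t] = \mathbb{P}[\rho(\A,X) > t]$,
$$\mathbb{P}[\rho(\A,X) \ge t] = \mathbb{P}\big[e^{\rho(\A,X)^2/4} \ge e^{t^2/4}\big] \le e^{-t^2/4}\,\mathbb{E}\big[e^{\rho(\A,X)^2/4}\big],$$
so it suffices to prove the moment inequality $\mathbb{E}\big[e^{\rho(\A,X)^2/4}\big] \le 1/\mathbb{P}[\A]$, which I will refer to as $(\ast)$. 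Before inducting I would record the convex reformulation of $\rho$: setting $s(x,y) = \big(\mathbf{1}\{x_i \neq y_i\}\big)_{i=1}^n$ and $U(\A,x) = \mathrm{conv}\{s(x,y): y \in \A\}$, the minimax theorem identifies $\rho(\A,x) = \min_{v \in U(\A,x)}|v|$, turning the $\sup$--$\inf$ into a Euclidean distance to a convex set. This is the feature that makes a convexity-based induction possible.

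The base case $n=1$ is a direct calculation: $\rho$ is $0$ on $\A$ and $1$ off it, so $(\ast)$ reduces to $a + (1-a)e^{1/4} \le 1/a$ with $a = \mathbb{P}[\A]$, which holds on $(0,1]$. For the inductive step I would split $\Omega = \Omega' \times \Omega_n$ with $\Omega' = \prod_{i<n}\Omega_i$, and for $\omega \in \Omega_n$ introduce the fiber $\A_\omega = \{x' : (x',\omega) \in \A\}$ and the projection $B = \{x' : (x',\omega') \in \A \text{ for some } \omega'\}$. The geometric heart is the bound, valid for all $\lambda \in [0,1]$,
$$\rho\big(\A,(x',\omega)\big)^2 \le (1-\lambda)^2 + \lambda\,\rho(\A_\omega,x')^2 + (1-\lambda)\,\rho(B,x')^2,$$
obtained by taking in $U(\A,(x',\omega))$ the combination $\lambda(v',0) + (1-\lambda)(w',1)$ of an optimal vector $v'$ for $\A_\omega$ and an optimal vector $w'$ for $B$, and applying $|\lambda v' + (1-\lambda)w'|^2 \le \lambda|v'|^2 + (1-\lambda)|w'|^2$.

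Substituting this bound, applying Hölder's inequality with exponents $1/\lambda$ and $1/(1-\lambda)$ and the induction hypothesis on the $(n-1)$-coordinate sets $\A_\omega$ and $B$, I would obtain
$$\mathbb{E}_{\Omega'}\big[e^{\rho(\A,(x',\omega))^2/4}\big] \le \frac{e^{(1-\lambda)^2/4}}{\mathbb{P}[B]}\left(\frac{\mathbb{P}[\A_\omega]}{\mathbb{P}[B]}\right)^{-\lambda}$$
(with degenerate fibers $\mathbb{P}[\A_\omega]=0$ handled by $\lambda=0$). To close the induction I would invoke the one-variable fact that $\inf_{0 \le \lambda \le 1} e^{(1-\lambda)^2/4} r^{-\lambda} \le 2-r$ for $0 \le r \le 1$ (optimal $\lambda = 1 + 2\log r$ when $r \ge e^{-1/2}$, and $\lambda = 0$ otherwise), applied with $r = \mathbb{P}[\A_\omega]/\mathbb{P}[B] \le 1$. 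Writing $a = \mathbb{P}[\A] = \mathbb{E}_\omega \mathbb{P}[\A_\omega]$ and $b = \mathbb{P}[B]$ and integrating over $\omega$, the right-hand side becomes $b^{-1}(2 - a/b)$, and $(a/b)(2-a/b) = 1-(1-a/b)^2 \le 1$ gives exactly $\mathbb{E}[e^{\rho^2/4}] \le 1/a$, establishing $(\ast)$.

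The step I expect to be the main obstacle is the convex-geometric inequality relating $\rho$ in dimension $n$ to the fiber and projection distances, coupled with the exact calculus optimization $\inf_\lambda e^{(1-\lambda)^2/4} r^{-\lambda} \le 2-r$: it is precisely the sharpness of these two optimizations that produces the clean constant $\tfrac14$ rather than a weaker absolute constant, and verifying the boundary and interior cases of the minimization is where the argument is most delicate.
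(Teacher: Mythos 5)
Your proposal is correct, but there is nothing in the paper to compare it against: the paper imports Theorem \ref{talagrand} as a black box, citing Section 21.8 of \cite{FK}, and never proves it. What you have written is a faithful reconstruction of the standard proof of Talagrand's inequality (Talagrand's original induction on dimension, as presented in the cited textbook and in Alon--Spencer): the Markov reduction to the moment bound $\mathbb{E}\big[e^{\rho(\A,X)^2/4}\big] \le 1/\mathbb{P}[\A]$, the identification of $\rho(\A,x)$ with the Euclidean distance from the origin to the convex hull of the incidence patterns, the fiber/projection split $\A_\omega$, $B$ with the convexity bound $\rho(\A,(x',\omega))^2 \le (1-\lambda)^2 + \lambda\rho(\A_\omega,x')^2 + (1-\lambda)\rho(B,x')^2$, H\"older with exponents $1/\lambda$ and $1/(1-\lambda)$, the optimization $\inf_{\lambda \in [0,1]} e^{(1-\lambda)^2/4} r^{-\lambda} \le 2-r$ (with the boundary regime $r < e^{-1/2}$ handled by $\lambda = 0$, correctly, since $e^{1/4} < 2 - e^{-1/2}$), and the closing estimate $\tfrac{a}{b}\big(2-\tfrac{a}{b}\big) \le 1$ after integrating over $\omega$ are exactly the canonical steps, and your base case $a + (1-a)e^{1/4} \le 1/a$ and degenerate-fiber case $\mathbb{P}[\A_\omega] = 0$ are handled properly. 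Note that in the paper's application the $\Omega_i$ are Bernoulli spaces, so the measurability and compactness issues behind your minimax step are trivial (the pattern set is a finite subset of $\{0,1\}^n$, so the hull is a polytope and Sion's theorem applies cleanly); also observe that only the replacement of $|v'|$ by $\rho(\A_\omega,x')$ needs the minimax direction $\min_v |v| \le \rho$, while the upper bound on $\rho(\A,x)$ by the norm of your exhibited combination is just Cauchy--Schwarz.

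One pedantic repair: the full pattern $s\big((x',\omega),(y',\omega')\big)$ arising from $y' \in B$ has last coordinate $\mathbf{1}\{\omega \neq \omega'\} \le 1$, not necessarily $1$, so $(w',1)$ need not lie in the exact convex hull $U(\A,(x',\omega))$ as you defined it; this is why some treatments take $U$ to be up-closed. It is harmless here: taking the same convex weights, the element of the exact hull has first block equal to $\lambda v' + (1-\lambda)w'$ and last coordinate at most $1-\lambda$, so its norm is only smaller and your displayed inequality survives verbatim.
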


\begin{lemma}\label{concentration}
For any function $f(n)$ such that $f(n)$ tends to infinity as $n$ tends to infinity, w.h.p.\ the difference between $e_n(k)$ and $e_{\frac{n}{2}}(k)$ is at most $k^\frac{7}{4}$ for all $f(n) \leq k \leq \frac{n}{10}$.
\end{lemma}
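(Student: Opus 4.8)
The plan is to show that each of $e_n(k)$ and $e_{n/2}(k)$ is sharply concentrated about its median, and that the two medians differ by much less than $k^{7/4}$; the exponent $7/4$ is chosen precisely to leave room for the concentration error. I would couple $G_{n/2,p}$ to $G_{n,p}$ as the subgraph induced on a fixed vertex set $V_1$ of size $n/2$, so that $e_{n/2}(k) \le e_n(k)$ always and hence the medians satisfy $\mathrm{med}(e_{n/2}(k)) \le \mathrm{med}(e_n(k))$. It then suffices to bound uniformly, for each $k$ in range, (i) the deviation of each variable from its own median and (ii) $\mathrm{med}(e_n(k)) - \mathrm{med}(e_{n/2}(k))$ from above, each by a constant fraction of $k^{7/4}$. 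I split the range $f(n) \le k \le n/10$ at $k_0 = \Theta\big((\log n)^{2/3}\big)$.

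For small $k \le k_0$ the difference is exactly $0$: since $p$ is constant, w.h.p.\ $\omega(G_{n/2,p}) = (2+o(1))\log_{1/p}(n/2) = \Theta(\log n) \gg k_0$, so w.h.p.\ both $G_{n/2,p}$ and $G_{n,p}$ contain a clique of size $\lceil k_0 \rceil$, whence $e_n(k) = e_{n/2}(k) = \binom{k}{2}$ for all $k \le k_0$. For the concentration when $k > k_0$, note that $G \mapsto e_G(k)$ is $1$-Lipschitz in the edge indicators and linearly certifiable (if $e_G(k) \ge s$, the at most $s$ present edges inside a witnessing $k$-set certify it). The standard certifiability argument applied to Theorem \ref{talagrand} then gives, with $\mu = \mathrm{med}(e_n(k)) = O(k^2)$, a bound of the form $\mathbb{P}[|e_n(k) - \mu| \ge a] \le 4\exp\!\big(-a^2/(4(\mu+a))\big)$, and similarly for $e_{n/2}(k)$. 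Taking $a = Ck\sqrt{\log n}$ makes each tail $n^{-\Omega(C^2)}$, summable over the at most $n$ values of $k$, and $Ck\sqrt{\log n} \le \tfrac14 k^{7/4}$ holds exactly once $k \ge k_0 = \Theta\big((\log n)^{2/3}\big)$. So w.h.p.\ both variables lie within $\tfrac14 k^{7/4}$ of their medians throughout $k > k_0$.

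It remains to bound the median gap for $k > k_0$ by $\tfrac12 k^{7/4}$. The upper end comes from a first-moment bound: $\mathbb{P}[e_n(k) \ge p\binom k2 + s] \le \binom nk\, \mathbb{P}[\mathrm{Bin}(\binom k2,p) \ge p\binom k2 + s]$, and optimizing the binomial upper tail yields $\mathrm{med}(e_n(k)) \le p\binom k2 + (1+o(1))\sqrt{2p(1-p)\binom k2 \log\binom nk}$. For the lower end I would establish a matching second-moment bound $\mathrm{med}(e_{n/2}(k)) \ge p\binom k2 + (1-o(1))\sqrt{2p(1-p)\binom k2 \log\binom{n/2}{k}}$, i.e.\ that some $k$-subset of $V_1$ attains the first-moment ceiling up to lower-order terms. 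Subtracting and using $\sqrt{\log\binom nk} - \sqrt{\log\binom{n/2}k} = \big(\log\binom nk - \log\binom{n/2}k\big)\big/\big(\sqrt{\log\binom nk} + \sqrt{\log\binom{n/2}k}\big)$, where the numerator lies in $[k\log 2,\, k\log 2.5]$ for $k \le n/10$ and the denominator is $\Theta(\sqrt{k\log(n/k)})$, the leading terms nearly cancel and leave a median gap of order $\sqrt{\binom k2}\cdot \sqrt{k}/\sqrt{\log(n/k)} = \Theta\big(k^{3/2}/\sqrt{\log(n/k)}\big) \le k^{3/2} \le \tfrac12 k^{7/4}$. Combining the three steps gives $|e_n(k) - e_{n/2}(k)| \le k^{7/4}$ w.h.p.\ uniformly.

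The main obstacle is the matching second-moment lower bound on $e_{n/2}(k)$: the $\binom{n/2}{k}$ candidate sets overlap heavily, so the variance of the number of near-maximal $k$-sets is a priori dominated by correlated (large-intersection) pairs, and one must show the dominant contribution in fact comes from nearly-disjoint pairs so that the count behaves as in the independent case. The truly delicate point is that the error hidden in the $(1\pm o(1))$ factors of the first- and second-moment estimates must be shown to be $o\big(k^{3/2}/\sqrt{\log(n/k)}\big)$ uniformly, which is tightest when $k$ is near $\log n$ and the gap itself is smallest. An alternative that avoids comparing absolute medians is to fix $G$ and use symmetry over half-sets to write $\mathbb{E}[e_{n/2}(k)] = \mathbb{E}_G\,\mathbb{E}_{V_1}\big[\max_{A \subseteq V_1} e_G(A)\big]$, then argue that a typical $G$ has far more than $2^k$ many $k$-sets within $\tfrac12 k^{7/4}$ of its maximum, so a random half-set $V_1$ contains one of them w.h.p.; establishing the count $\gg 2^k$ and the containment is again a second-moment computation, so the difficulty localizes to the same place.
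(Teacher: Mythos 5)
Your architecture---concentrate each of $e_n(k)$ and $e_{n/2}(k)$ around its own location, then estimate the gap between the two locations---is genuinely different from the paper's, and it has a real gap exactly at the point you yourself flag. The paper never estimates where $e(k)$ lives: it defines $x_k$ as a quantile of $e_{n/2}(k)$ at height $q_k = e^{-k^{3/2}/8}$, applies Talagrand's Inequality (Theorem \ref{talagrand}) to the certifiable function $e_G(k)$ to get $\mathbb{P}\left[e_{n/2}(k) \notin \left[x_k, x_k + k^{7/4}\right]\right] \le 2e^{-k^{3/2}/8}$, and then transfers this \emph{same} interval to $e_n(k)$ by covering the $n$ vertices with at most $2^{3k}$ sets of size $\frac{n}{2}$ (built from a partition into $3k$ blocks, using $k \le \frac{n}{10}$) so that every $k$-set lies inside some half-set; each half induces a $G_{n/2,p}$, and since the tail $e^{-k^{3/2}/8}$ is super-exponential in $k$ it absorbs the $2^{3k}$ union bound, giving $\mathbb{P}\left[e_n(k) > x_k + k^{7/4}\right] \le e^{-k^{3/2}/9}$, while $\mathbb{P}[e_n(k) < x_k] \le \mathbb{P}[e_{n/2}(k) < x_k]$ by monotonicity. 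Both variables are pinned to a single anchor $x_k$ whose value is never computed, which is precisely what makes the lemma cheap.

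Your route instead requires two-sided location estimates for both maxima to additive precision $\frac14 k^{7/4}$, a strictly stronger statement than the lemma. The first-moment upper bound is fine, but the matching lower bound $\mathrm{med}(e_{n/2}(k)) \ge p\binom{k}{2} + (1-o(1))\sqrt{2p(1-p)\binom{k}{2}\log\binom{n/2}{k}}$ is asserted rather than proven, and it fails as formulated on part of the range: for $k = O(\log n)$ the Gaussian-style deviation $\Theta\big(k^{3/2}\sqrt{\log(n/k)}\big)$ exceeds the hard cap $(1-p)\binom{k}{2}$ (this happens once $k \lesssim \frac{4p}{1-p}\log(n/k)$), so the maximum is governed by Cram\'er-type large deviations and clique-like sets, not Gaussian tails; your clique observation only neutralizes $k$ up to $\omega(G_{n/2,p}) \approx 2\log_{1/p} n$, leaving a window roughly $\Theta(\log n) \le k \le \Theta\big((\log n)^2\big)$ uncovered. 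In that window the allowed error $k^{7/4}$ is \emph{smaller} than the deviation scale $k^{3/2}\sqrt{\log(n/k)}$, so your ``leading terms nearly cancel'' step needs relative error $o\big(k^{1/4}/\sqrt{\log(n/k)}\big)$, i.e.\ a second-order-exact analysis with the non-Gaussian rate function together with a second-moment computation controlling heavily overlapping $k$-sets at that exactness---none of which is supplied, and which you correctly identify as the unresolved crux. (For $k \ge C(\log n)^2$ your scheme does close easily, since there $k^{3/2}\sqrt{\log(n/k)} \ll k^{7/4}$ and the trivial bound $\mathrm{med}(e_{n/2}(k)) \ge p\binom{k}{2} - O(k)$ suffices.) So the proposal as written is incomplete on a nontrivial range of $k$; the paper's quantile-anchoring plus $2^{3k}$-covering device is exactly what makes the location-estimation problem you ran into unnecessary.
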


\begin{proof} %[\textbf{Proof}]
We first mention a brief outline of the proof. We will use Talagrand's Inequality \ref{talagrand} to show that $e_{\frac{n}{2}}(k)$ is concentrated in an interval of length $k^{\frac{7}{4}}$ for all $c \log n \leq k \leq \frac{n}{2}$  with (very) high probability. Then we will show that there are $V_1,V_2,...,V_l \in \binom{V(G_{n,p})}{n/2}$ (where $l$ is not very big) such that any $k$-subset of $V(G_{n,p})$ will be totally inside at least one $V_i$. Note that the induced graph on $V_i$ is just distributed as $G_{\frac{n}{2},p}$, hence $\mathbb{P}[e_n(k) > x] \leq l \cdot \mathbb{P}[e_{n/2}(k) > x]$ by the union bound. This is the strategy to show a small gap between $e_n(k)$ and $e_{n/2}(k)$ for fixed $k$, and of course to prove the same for all $f(n) \le k \le n$, we use the union bound.

\medskip
\noindent \textbf{Concentration of $\boldsymbol{e_{\frac{n}{2}}(k)}$.}
Let $m = \binom{n/2}{2}$. The $G_{\frac{n}{2},p}$ model is the same as $\Omega = \prod _{i = 1}^{m} \Omega_i$, where the $\Omega_i$'s are independent Bernoulli Distributions with success probability $p$. Let 
\begin{equation}\label{qk}
x_k = \min\{x \in \mathbb{N} : \mathbb{P}[e_{n/2}(k) \leq x] \geq q_k\},  \text{     where     } q_k = e^{-\frac{k^{3/2}}{8}}.
\end{equation}
Let $\A = \{G \in \Omega : e_G(k) \leq x_k\}$. Clearly 
\begin{equation}\label{tail}
\mathbb{P}[\A] \geq q_k.
\end{equation} 
\\For $G = (G_1, G_2, \cdots, G_m) \in \Omega$, let $$\rho (\A, G) = \sup_{|\alpha| = 1} \inf_{H \in \A} \sum_{i : G_i \neq H_i} {\alpha}_i.$$ 
\\For $t > 0$, let $$\A_t = \{G \in \Omega : \rho (\A, G) \leq t\}.$$ 
\\If $G \in \Omega$ such that $e_G(k) \geq x_k + d_k$, then there exists a $k$-vertex subset $A \subseteq V(G)$ such that $e(A) \geq x_k + d_k$. So, by using $\alpha_i$'s such that those corresponding to pairs of vertices in $A$ are $\alpha_i = \frac{1}{\sqrt{\binom{k}{2}}}$ and all other $\alpha_i = 0$, it is easy to see that $G \notin \A_{\frac{d_k}{k}}$.
\\Using Talagrand's Inequality (Theorem \ref{talagrand}), $\mathbb{P}[\A](1 - \mathbb{P}[\A_t]) \leq e^{-\frac{t^2}{4}}$, and using \eqref{tail}, we can conclude that $q_k \mathbb{P}\left[e_{n/2}(k) \geq x_k + d_k\right] \leq e^{-\frac{{d_k}^2}{4k^2}}$. Now choosing $d_k = k^{\frac{7}{4}}$ and using \eqref{qk}, we have $\mathbb{P}\left[e_{n/2}(k) \geq x_k + k^{\frac{7}{4}}\right] \leq e^{-\frac{k^{3/2}}{8}}$. Hence, 
\begin{equation}\label{half}
\mathbb{P}\left[e_{n/2}(k) \notin \left[x_k, x_k + k^{\frac{7}{4}}\right]\right] \leq 2 e^{-\frac{k^{3/2}}{8}}.
\end{equation}
Since $f(n) \rightarrow \infty$ as $n \rightarrow \infty$, the probability that there exists $f(n) \le k \le \frac{n}{2}$ such that $e_{n/2}(k) \notin [x_k, x_k + k^{\frac{7}{4}}]$ is at most 
$\sum_{k = f(n)}^{\infty} 2 e^{-\frac{k^{3/2}}{8}} = o(1)$.

\medskip
\noindent \textbf{Covering of $\boldsymbol{V}$ ($\boldsymbol{|V| = n}$) by subsets of size $\boldsymbol{\frac{n}{2}}$.}
We will show that there exists $l < 2^{3k}$ and $V_1,V_2,...,V_l \in \binom{V}{\frac{n}{2}}$ such that for all $A \in \binom{V}{k}$ there exists $i \in [l]$ with $A \subset V_i$. First partition $V$ into $3k$ almost-equal sets $U_1,U_2,...,U_{3k}$ such that $|U_i|$ equals to either $\ceil{\frac{n}{3k}}$ or $\floor{\frac{n}{3k}}$ for all $i \in [3k]$. Now for all $S \in \binom{[3k]}{k}$, choose $V_S \in \binom{V}{\frac{n}{2}}$ such that $V_S \supseteq \bigcup_{i \in S} U_i$, which is clearly possible because $\sum_{i \in S} |U_i| < k(\frac{n}{3k} + 1) < \frac{n}{2}$ due to the assumption that $n \le \frac{n}{10}$. Note that $|S| = \binom{3k}{k} < 2^{3k}$. Now we show that for $A \in \binom{V}{k}$ there exists $S \in \binom{[3k]}{k}$ such that $A \subset V_S$. For a fixed $A \in \binom{V}{k}$, consider $S' = \{i : U_i \cap A \neq \emptyset\}$; clearly $|S'| \leq k$ and so any $S \in \binom{[3k]}{k}$ with $S \supseteq S'$ will obviously give $A \subset V_S$. 

\medskip
\noindent \textbf{\textbf{Completing proof of Lemma \ref{concentration}.}}
Note that the distribution of an induced subgraph of $G_{n,p}$ on vertex set $V \subseteq V(G_{n,p})$ is $G_{|V|,p}$. For a graph $G$ with $n$ vertices using the above covering of vertex set $V(G)$, we know that there exist at most $2^{3k}$ induced subgraphs on $\frac{n}{2}$ vertices such that if all of those subgraphs do not have any subgraph of size $k$ with at least $M$ edges then $G$ also does not have a subgraph of size $k$ with at least $M$ edges. Hence, by using \eqref{half},
\begin{align*}
\mathbb{P}\left[e_n(k) > x_k + k^{\frac{7}{4}}\right] 
&\le 2^{3k} \mathbb{P}\left[e_{n/2}(k) > x_k + k^{\frac{7}{4}}\right]\\
&\le 2^{3k} 2 e^{-\frac{k^{3/2}}{8}}\\
&\le e^{-\frac{k^{3/2}}{9}}.
\end{align*}
It is also trivial to see that $$\mathbb{P}[e_n(k) < x_k] \le \mathbb{P}[e_{n/2}(k) < x_k] \le e^{-\frac{k^{3/2}}{9}}.$$
Since $f(n) \rightarrow \infty$ as $n \rightarrow \infty$, by a similar summation over $k$ as was done for $e_{n/2}(k)$, one can see that w.h.p.\ $e_n(k)$ also takes a value in the interval $\left[x_k, x_k + k^{\frac{7}{4}}\right]$ for all $f(n) \le k \le \frac{n}{10}$. Hence, we are done.
\end{proof}

\section{Proof of Theorem \ref{th3}}\label{theorem3}\label{sec5}
Let $0 < p < 1$ be a fixed constant. Fix a graph $H$ sampled from the distribution of random graphs $G_{n,p}$. Throughout this section, let $d_1 \le d_2 \le ... \le d_n$ be the degree sequence of $H$, and we will use $a(A)$ and $a(k)$ from Definition \ref{defnA}. Let $\beta'$ denote the minimum $k \in \{2, 3, \cdots, n\}$ for which $a(k) \le k a(1)$. Note that this inequality is trivially true for $k = 1$, but w.h.p.\ false for $k = 2$ since w.h.p.\ $d_2 - d_1 \ge 2$ by Lemma \ref{alan}. Lemma \ref{beta} will show that w.h.p.\ $\beta'$ exists. Let $\gamma = \beta' - 1$ and let us define $\beta$ as follows. 
\begin{itemize}
\item If $a(\beta') = \beta' a(1)$, then define $\beta = \beta' + 1$.
\item Otherwise define $\beta = \beta'$.
\end{itemize}
Our aim is to show that this choice of $\gamma$ and $\beta$ works in Theorem \ref{th3}. From the definition of $\gamma$ we have the following for all $2 \le k \le \gamma$: 
\begin{equation}\label{single}
a(k) > k a(1).
\end{equation}

\begin{lemma}\label{beta}
W.h.p.\ $\gamma = \Theta(\sqrt{n \log n})$.
\end{lemma}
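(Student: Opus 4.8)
The plan is to rewrite the defining inequality $a(k)\le k\,a(1)$ as a comparison between excess degrees and internal edges, and then to trap $\gamma$ between two constant multiples of $\sqrt{n\log n}$: the lower bound will come from the degree-gap estimate (Lemma \ref{gap}) together with the trivial bound on the number of edges in a small set, and the upper bound from the degree asymptotics (Theorem \ref{degree} and Lemma \ref{alan}) together with the edge-density estimate (Lemma \ref{sim}).

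First I would record the basic identity. Counting the edges incident to a set $A\subseteq V(H)$ gives $a(A)=\sum_{v\in A}d_H(v)-e(A)$, where $e(A)$ is the number of edges inside $A$; in particular $a(1)=d_1$. Consequently, for every $k$-subset $A$,
\[
a(A)-k\,a(1)=\sum_{v\in A}\bigl(d_H(v)-d_1\bigr)-e(A),
\]
so $a(k)\le k\,a(1)$ holds exactly when some $k$-subset makes the right-hand side nonpositive. This reduces the whole lemma to locating the threshold $k$ at which a dense low-degree set first has its internal edge count catch up with its total excess degree.

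For the lower bound $\gamma=\Omega(\sqrt{n\log n})$, fix the constant $\epsilon$ of Lemma \ref{gap} and take $2\le k\le \epsilon\sqrt{n\log n}$. For any $k$-subset $A$ the excess-degree sum is minimized by the $k$ smallest degrees, so $\sum_{v\in A}(d_H(v)-d_1)\ge \sum_{i=1}^{k}(d_i-d_1)>\binom{k}{2}$ by Lemma \ref{gap}, while trivially $e(A)\le\binom{k}{2}$. Hence $a(A)-k\,a(1)>0$ for every such $A$, giving $a(k)>k\,a(1)$ (this is exactly \eqref{single}) throughout this range; therefore $\beta'>\epsilon\sqrt{n\log n}$ and $\gamma=\beta'-1\ge\epsilon\sqrt{n\log n}-1$. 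For the upper bound $\gamma=O(\sqrt{n\log n})$ — which also confirms that $\beta'$ exists — I would exhibit a single good set: put $k=C\sqrt{n\log n}$ for a large constant $C=C(p)$ and let $A$ be the $k$ lowest-degree vertices. Since $k>n^{1/2}$, Lemma \ref{sim} yields $e(A)\ge p\binom{k}{2}-o(k^2)=\tfrac{p}{2}C^2\,n\log n-o(n\log n)$. For the degree sum I would use $\sum_{i=1}^{k}(d_i-d_1)\le k\,(d_k-d_1)$, bounding $d_k$ from above by Theorem \ref{degree} (with $m=k$, where $\log(n/k)=(\tfrac12+o(1))\log n$, the dependence on $C$ entering only through the lower-order $\log C$ term) and $d_1=\delta(H)$ from below by Lemma \ref{alan}; this gives $d_k-d_1=O(\sqrt{n\log n})$ with an implied constant independent of $C$, hence $\sum_{i=1}^{k}(d_i-d_1)=O(C\,n\log n)$. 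Because $e(A)$ grows quadratically in $C$ while the excess-degree sum grows only linearly, choosing $C$ large forces $e(A)>\sum_{i=1}^{k}(d_i-d_1)$, so $a(k)<k\,a(1)$ and $\beta'\le k$. Combining the two bounds yields $\gamma=\Theta(\sqrt{n\log n})$ with high probability.

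The main obstacle is the degree computation in the upper bound: one must line up the $\sqrt{\log}$ factors carefully, using Theorem \ref{degree} to control $d_k$ and Lemma \ref{alan} to control $d_1$, and confirm that the constant multiplying $n\log n$ in the excess-degree bound is genuinely independent of $C$. The decisive structural point that makes the argument close is precisely this asymmetry in the $C$-dependence, namely linear growth for the excess-degree sum against quadratic growth for the number of internal edges, which guarantees a crossover at scale $\Theta(\sqrt{n\log n})$ and matches the lower bound up to the constant.
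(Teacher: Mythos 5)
Your proposal is correct and follows essentially the same route as the paper: the lower bound is identical (Lemma \ref{gap} against the trivial bound $e(A)\le\binom{k}{2}$ via the identity $a(A)=\sum_{v\in A}d_H(v)-e(A)$), and the upper bound exhibits the same witness set of $k=\Theta(\sqrt{n\log n})$ lowest-degree vertices, playing the quadratic-in-the-constant edge count from Lemma \ref{sim} against the linear-in-the-constant excess-degree sum. The only cosmetic difference is that you bound $d_k-d_1=O(\sqrt{n\log n})$ via Theorem \ref{degree} and Lemma \ref{alan}, where the paper simply invokes the cruder bound $d_n-d_1=O(\sqrt{n\log n})$ of Corollary \ref{af}; both give a constant independent of your $C$, so the crossover argument closes either way.
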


\begin{proof}
By using Lemma \ref{gap}, there exists $\epsilon > 0$ such that w.h.p.\ for all $2 \le k \le \epsilon \sqrt{n \log n}$, 
\begin{equation}\label{ll}
a(k) \geq \sum_{i = 1}^k d_i - \binom{k}{2} > kd_1 = ka(1).
\end{equation}
Now there exists a constant $L$ such that w.h.p.\ for $k = L \sqrt{n \log n}$,
\begin{align}
a(k) &< \sum_{i = 1}^k d_i - \frac{p}{2}\binom{k}{2}\label{ll1}
\\&< kd_1 + k \Theta(\sqrt{n \log n}) - \frac{p}{2}\binom{k}{2}\label{ll2}
\\&< kd_1\label{ll3}
\\&= ka(1),\label{ll4}
\end{align}
where \eqref{ll1} is true because w.h.p.\ if $A \in \binom{V(H)}{k}$ consists of the $k$ vertices with smallest degrees, then $a(A) < \sum_{i = 1}^k d_i - \frac{p}{2}\binom{k}{2}$, where the $\frac{p}{2}\binom{k}{2}$ part comes from Lemma \ref{sim}. Step \eqref{ll2} comes from the fact that w.h.p.\ $d_n - d_1 \le \Theta(\sqrt{n \log n})$, which is true because of Corollary \ref{af}. One can choose a constant $L$ satisfying \eqref{ll3}. Now combining \eqref{ll} and \eqref{ll4}, we conclude that w.h.p.\ $\gamma = \Theta(\sqrt{n \log n})$.
\end{proof}

\begin{lemma}\label{helper}
W.h.p.\ for all positive integers $k$ and $l$ such that $n^{\frac{1}{2}} < k < l \le n$, it holds that $\frac{a(k)}{k} > \frac{a(l)}{l}$.
\end{lemma}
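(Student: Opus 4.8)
The plan is to reduce the claim to a single polynomial inequality in the partial degree sums and the densest-subgraph counts of $H$, and then to show that a positive ``main term'' of order $p\,kl(l-k)$ dominates all fluctuations. Write $a(A)=\sum_{v\in A}d_H(v)-e(A)$ for the number of edges meeting a set $A$, so that $a(m)=\min_{|A|=m}\big(\sum_{v\in A}d_H(v)-e(A)\big)$. Writing $S(m)=\sum_{i=1}^m d_i$ for the sum of the $m$ smallest degrees and recalling $e_n(m)$ from Definition \ref{dense}, the two bounds $\sum_{v\in A}d_H(v)\ge S(m)$ and $e(A)\le e_n(m)$ give $a(m)\ge S(m)-e_n(m)$, while taking $A$ to be the $m$ lowest-degree vertices yields a matching upper bound up to the internal-edge count of that set. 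Since $\frac{a(k)}{k}>\frac{a(l)}{l}$ is equivalent to $l\,a(k)-k\,a(l)>0$, I would substitute these bounds together with the estimate $e_n(m)=p\binom{m}{2}+(\text{error})$ to obtain
\begin{equation*}
l\,a(k)-k\,a(l)\ \ge\ \big(lS(k)-kS(l)\big)-p\Big(l\tbinom{k}{2}-k\tbinom{l}{2}\Big)-(\text{edge error}),
\end{equation*}
and then exploit the algebraic identity $-p\big(l\binom{k}{2}-k\binom{l}{2}\big)=\tfrac{p}{2}\,kl(l-k)$, which is the positive main term.

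It remains to dominate two error sources. The degree term rearranges as $lS(k)-kS(l)=-\sum_{i=1}^{k}\sum_{j=k+1}^{l}(d_j-d_i)$, a nonpositive quantity whose absolute value I would control by the clustering of the small degrees: for $k$ up to about $\tfrac12\sqrt{n}\log n$ Lemma \ref{gap1} gives $\sum_{i\le k}(d_{2k}-d_i)=o(k^2)$, and for larger $k$ Corollary \ref{af} gives $d_j-d_i\le d_n-d_1=O(\sqrt{n\log n})$. In either range this contribution is of strictly smaller order than the main term for every gap $l-k\ge1$, since the clustering bounds it by $o(k^2)$ per unit of gap when $k\lesssim\sqrt{n}\log n$ and by $O(k\sqrt{n\log n})$ per unit of gap otherwise, both beaten by the $\Theta(k^2)$ per unit of gap of the main term precisely because $k>\sqrt n$. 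The edge term is the more delicate input: I would pin down $e_n(m)$ using the sharp Talagrand-based concentration of Lemma \ref{concentration}, which confines $e_n(m)$ to a window of width $m^{7/4}$ (far finer than the $o(m^2)$ afforded by Lemma \ref{sim}), and use Corollary \ref{bip} to count the cross edges that appear when passing from the $k$- to the $l$-vertex set.

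Methodologically I would split on the size of the gap. When $l>2k$ the main term is of order $p\,k\,l^2$ and dwarfs even the crude estimates of Lemma \ref{sim} and Corollary \ref{af}, so this regime is routine. The real work is the near-diagonal regime $k<l\le 2k$ with $k$ close to $n^{1/2}$, where the per-vertex margin $\frac{a(k)}{k}-\frac{a(l)}{l}$ is only $\Theta\big(p(l-k)\big)$; here every fluctuation must be held below this small margin, which is exactly why the fine tools of this section (the degree-clustering of Lemma \ref{gap1} and the width-$m^{7/4}$ concentration of Lemma \ref{concentration}) are needed rather than their cruder counterparts.

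I expect this near-diagonal case to be the main obstacle, and within it the crucial point is to arrange the edge-count fluctuations so that they enter the combination $l\,a(k)-k\,a(l)$ with the favorable coefficient $(l-k)$ rather than $l$: a naive bookkeeping charges each fluctuation a factor $l$ and overwhelms the $\Theta(pkl(l-k))$ signal, so one must instead anchor both $a(k)$ and $a(l)$ to a common $k$-vertex configuration and track only the incremental edges, whose fluctuations are governed by Corollary \ref{bip} and the sharp window of Lemma \ref{concentration}. The degree side, by contrast, is comparatively soft once Lemma \ref{gap1} is in hand.
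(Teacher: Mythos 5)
There is a genuine gap, and it traces to missing the paper's opening move: it suffices to prove the \emph{consecutive} inequality $\frac{a(k)}{k} > \frac{a(k+1)}{k+1}$ for all $k > n^{1/2}$, since the full statement then follows by transitivity. This reduction is not a convenience but a necessity, and your direct pairwise attack breaks in the region $\sqrt{n} < k$, $2k < l \lesssim \sqrt{n \log n}$, which falls between your two regimes. There your ``routine'' far-diagonal case needs the degree term $\sum_{i \le k}\sum_{j=k+1}^{l}(d_j - d_i)$ to lose to the main term $\frac{p}{2}kl(l-k)$, i.e., per unit of gap, $k(d_l - d_1) \ll pkl$; but Theorem \ref{degree} shows the spread $d_l - d_1$ is genuinely $\Theta(\sqrt{n\log n})$ already for $l$ of order $\sqrt{n}$, so you would need $l \gg \sqrt{n\log n}$, which fails throughout this region. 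Your stated justification --- that $\Theta(k^2)$ per unit of gap beats $O(k\sqrt{n\log n})$ ``precisely because $k > \sqrt{n}$'' --- is arithmetically wrong for $\sqrt{n} < k \le \sqrt{n\log n}$. Nor can Lemma \ref{gap1} patch it: it controls differences only against $d_{2k}$, while in this regime $j$ ranges beyond $2k$. So with your sandwich bounds the quantity $l\,a(k) - k\,a(l)$ is simply not provably positive there; the truth is rescued only by chaining through consecutive indices, where one never compares against any degree beyond $d_{2k}$ --- exactly what Lemma \ref{gap1} handles.

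Your near-diagonal mechanism is half right and half wrong. The anchoring idea --- that fluctuations must enter with coefficient $l-k$, achieved by extending a common optimal configuration and tracking increments --- is precisely the paper's engine: take $A$ with $a(A) = a(k)$, let $B$ be the $k$ lowest-degree vertices \emph{outside} $A$ (the added vertex must be simultaneously low-degree and well-attached, which is why one restricts to $B$); Corollary \ref{bip} yields $v \in B$ with more than $\frac{5}{6}pk$ neighbors in $A$ and $d_v \le d_{2k}$, whence $a(k+1) < a(k) + d_{2k} - \frac{5}{6}pk$. Combined with the lower bound $a(k) > \sum_{i=1}^{k} d_i - \frac{2}{3}pk^2$ from Lemma \ref{sim}, everything reduces to $\sum_{i=1}^{k}(d_{2k} - d_i) < \frac{1}{6}pk^2$, i.e., Lemma \ref{gap1}; for $k \ge \frac{1}{2}\sqrt{n}\log n$ an arbitrary vertex via Lemma \ref{chernoff} together with Corollary \ref{af} suffices. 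But the tool you actually commit to, Lemma \ref{concentration}, cannot play the role you assign it: it couples $e_n(k)$ with $e_{n/2}(k)$ --- same $k$, different ambient size --- around an unspecified quantile $x_k$, and it neither localizes $e_n(m)$ within $m^{7/4}$ of $p\binom{m}{2}$ nor relates $e_n(k)$ to $e_n(l)$ for $k \ne l$; indeed the paper's proof of this lemma never invokes it (it appears only in Lemma \ref{prob}). And as you yourself observe, even a genuine two-sided $m^{7/4}$ window gets multiplied by $l$ in $l\,a(k) - k\,a(l)$ and swamps the $\frac{p}{2}k^2$ margin at $l = k+1$. So the anchored incremental computation is not a refinement of your plan --- it \emph{is} the proof --- and your proposal leaves it unexecuted while the concrete steps it does commit to do not close the argument.
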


\begin{proof}
It is enough to show that w.h.p.\ for all $k > n^{\frac{1}{2}}$, it holds that $\frac{a(k)}{k} > \frac{a(k+1)}{k+1}$. By Lemma \ref{sim}, w.h.p.\ for an arbitrary $k$-vertex subset $A \subset V(H)$, we have $e(A) < \frac{2}{3} p k^2$. So, $$a(A) = \sum_{v \in A} d_v - e(A) > \sum_{i=1}^k d_i - \frac{2}{3} p k^2.$$ 
Hence,
\begin{equation}\label{fst}
a(k) > \sum_{i=1}^k d_i - \frac{2}{3} p k^2.
\end{equation}

Let us first prove that w.h.p.\ $\frac{a(k)}{k} > \frac{a(k+1)}{k+1}$ for all $n^{\frac{1}{2}} < k < \frac{1}{2} n^{\frac{1}{2}} \log n$. Let $A \in \binom{V(H)}{k}$ such that $a(k) = a(A)$. Let $B$ be the set of the $k$ smallest degree (in $H$) vertices outside $A$. Now by using Corollary \ref{bip}, there are more than $\frac{5}{6} p k^2$ many edges between $A$ and $B$. Hence, there is $v \in B$ such that $v$ has more than $\frac{5}{6} p k$ neighbors in $A$. So, $$a(A \cup \{v\}) = a(A) + d_v - N_A(v) < a(k) + d_{2k} - \frac{5}{6} p k.$$ 
Hence, 
\begin{equation}\label{snd}
a(k+1) < a(k) + d_{2k} - \frac{5}{6} p k.
\end{equation}
Now adding inequality \eqref{fst} to inequality \eqref{snd} multiplied by $k$, we get $$ka(k+1) < (k+1)a(k) + \sum_{i = 1}^k (d_{2k} - d_i) - \frac{1}{6} p k^2.$$ So, in order to prove that w.h.p.\ $\frac{a(k)}{k} > \frac{a(k+1)}{k+1}$, it is enough to show that $\sum_{i = 1}^k (d_{2k} - d_i) < \frac{1}{6} p k^2$, which is w.h.p.\ true because of Lemma \ref{gap1}.

For the case $k \ge \frac{1}{2} n^{\frac{1}{2}} \log n$, instead of needing to carefully pick which vertex to add to $A$, we can pick an arbitrary vertex. Let $A \in \binom{V(H)}{k}$ such that $a(k) = a(A)$. Now use Lemma \ref{chernoff} to conclude that w.h.p.\ there is $v \in V(H) \setminus A$ such that $N_A(v) > \frac{5}{6} p k$. So, similar to the other case but with $d_n$ instead of $d_{2k}$ because we pick an arbitrary vertex, it is sufficient to show that $\sum_{i = 1}^k (d_n - d_i) < \frac{1}{6} p k^2$, which is w.h.p.\ true because of the fact that w.h.p.\ $d_n - d_1 = \Theta(\sqrt{n \log n})$ from Corollary \ref{af}.
\end{proof}

From Lemma \ref{beta} and Lemma \ref{helper}, we can say that w.h.p.\ for $k \ge \beta$,
\begin{equation}\label{mid}
a(k) < ka(1).
\end{equation}

\begin{lemma}\label{double}
W.h.p.\ for all positive integers $k$ and $l$ with $\beta \le k+l \le n$, we have $a(k) + a(l) > a(k+l)$.
\end{lemma}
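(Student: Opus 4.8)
The plan is to reduce the claimed strict subadditivity to two comparisons of ``density ratios'' of the form $\frac{a(m)}{m}$ against $\frac{a(k+l)}{k+l}$. Assume without loss of generality that $k \le l$, and work on the (with high probability) simultaneous occurrence of the finitely many prior events: Lemma \ref{beta}, Lemma \ref{helper}, and the bounds \eqref{single} and \eqref{mid}. The core observation is purely arithmetic: if one can establish both $\frac{a(k)}{k} > \frac{a(k+l)}{k+l}$ and $\frac{a(l)}{l} > \frac{a(k+l)}{k+l}$, then multiplying these by $k$ and $l$ respectively and adding yields $a(k) + a(l) > (k+l)\cdot\frac{a(k+l)}{k+l} = a(k+l)$, which is exactly what we want. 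So the whole proof is to verify these two ratio inequalities.

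First I would record that $l > \sqrt{n}$. Since $k \le l$ and $k + l \ge \beta$, we get $l \ge \beta/2$, and by Lemma \ref{beta} we have $\beta = \Theta(\sqrt{n \log n})$, so w.h.p.\ $l > \sqrt{n}$. As also $l < k + l \le n$, Lemma \ref{helper} applies directly and gives $\frac{a(l)}{l} > \frac{a(k+l)}{k+l}$. This disposes of the $l$-comparison with no extra work.

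The only delicate step is the same inequality with $k$ in place of $l$, because $k$ could be as small as $1$ and Lemma \ref{helper} only controls ratios of indices exceeding $\sqrt{n}$. I would therefore split on the size of $k$. If $k > \sqrt{n}$, then $\sqrt{n} < k < k+l \le n$ and Lemma \ref{helper} applies verbatim, giving $\frac{a(k)}{k} > \frac{a(k+l)}{k+l}$. If $k \le \sqrt{n}$, I would bridge through the threshold value $a(1) = d_1$: since $\gamma = \Theta(\sqrt{n\log n}) > \sqrt{n}$ w.h.p., we have $k \le \gamma$, so \eqref{single} (together with $a(1) = d_1$) yields $\frac{a(k)}{k} \ge d_1$; meanwhile $k + l \ge \beta$ and \eqref{mid} yield $\frac{a(k+l)}{k+l} < d_1$. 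Chaining these gives $\frac{a(k)}{k} \ge d_1 > \frac{a(k+l)}{k+l}$.

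The main obstacle is exactly this small-$k$ regime, where the monotonicity of Lemma \ref{helper} is unavailable. The resolution is to notice that the constant $a(1) = d_1$ acts as a clean pivot separating the two regimes: every index below $\gamma$ has ratio at least $d_1$ by \eqref{single}, while every index at least $\beta$ has ratio strictly below $d_1$ by \eqref{mid}, so $d_1$ lets us compare $\frac{a(k)}{k}$ with $\frac{a(k+l)}{k+l}$ without any fine control of either quantity. Two small points of care: one should union-bound over the prior w.h.p.\ events so they hold at once, and one should keep each inequality strict (for the boundary case $k = 1$, where $\frac{a(1)}{1} = d_1$ only with equality, the required strictness is supplied by the strict inequality $\frac{a(k+l)}{k+l} < d_1$).
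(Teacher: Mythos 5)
Your proof is correct and takes essentially the same approach as the paper: the paper likewise reduces the lemma to the ratio comparison $\frac{a(m)}{m} > \frac{a(k+l)}{k+l}$ for $m \in \{k,l\}$ (stated there as one claim for all $k' < l'$ with $\beta \le l'$), proved by Lemma \ref{helper} when $m > \sqrt{n}$ and otherwise via the pivot $a(1)$ through \eqref{single} and \eqref{mid}, and then adds the two weighted inequalities exactly as you do. Your only deviation---observing $l \ge \beta/2 > \sqrt{n}$ so the $l$-comparison needs only Lemma \ref{helper}, and spelling out the $k=1$ strictness---is a harmless streamlining of the same argument.
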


\begin{proof}
First we claim that for any positive integers $k'$ and $l'$ with $k' < l'$ and $\beta \le l'$, it holds that $\frac{a(k')}{k'} > \frac{a(l')}{l'}$. This clearly holds when $k' > n^{\frac{1}{2}}$ by Lemma \ref{helper}. Otherwise $k' \le \gamma$ because $\gamma = \Theta(\sqrt{n \log n})$ by Lemma \ref{beta}, and in this case by \eqref{single} and \eqref{mid}, we have $\frac{a(k')}{k'} \ge a(1) > \frac{a(l')}{l'}$. Now by using this claim, for any positive integers $k$ and $l$ with $\beta \le k+l \le n$, $$a(k) + a(l) > \frac{k}{k+l} \cdot a(k+l) + \frac{l}{k+l} \cdot a(k+l) = a(k+l).$$
\end{proof}

\begin{lemma}\label{prob}
W.h.p.\ for all positive integers $k$ and $l$ less than $n$ with $k+l \ge n$, we have $a(k) + a(l) > a(n) + a(r)$, where $k + l = n + r$.
\end{lemma}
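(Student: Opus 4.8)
```latex
The plan is to prove this ``near the boundary'' version of the superadditivity
statement (Lemma \ref{prob}) by reducing it, via complementation inside $H$,
to the subadditivity-type statement that was already established in Lemma
\ref{double}. Recall from the proof of Corollary \ref{ext_co} that for any
$A \subseteq V(H)$ the quantity $e(A) + a(V(H) \setminus A)$ equals the total
number of edges $e(H) = a(n)$, so that $e(k) + a(n-k) = a(n)$ for every
$k \le n$. The identity I would exploit is the resulting algebraic
equivalence: the inequality $a(k) + a(l) > a(n) + a(r)$ with $k + l = n + r$
is, after substituting $k' = n - k$ and $l' = n - l$ (so that
$k' + l' = n - r \le n$ and $k' + l' = 2n - (k+l)$), equivalent to a statement
of the form $e(k') + e(l') < e(k'+l')$, i.e.\ strict superadditivity of $e$.

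First I would set $k' = n-k$ and $l' = n-l$. Since $k,l < n$ we have
$k', l' \ge 1$, and since $k + l \ge n$ we have $k' + l' = 2n - (k+l) \le n$,
so $k'$ and $l'$ are legitimate indices with $k' + l' \le n$. Using
$a(k) = a(n) - e(k')$, $a(l) = a(n) - e(l')$, and $a(r) = a(n-k'-l') =
a(n) - e(k'+l')$, the desired inequality $a(k) + a(l) > a(n) + a(r)$ collapses
to $e(k') + e(l') < e(k'+l')$, exactly the conclusion of Lemma \ref{ext_le}
in the deterministic regular setting. So the whole task becomes: show that
w.h.p.\ $e(k') + e(l') < e(k'+l')$ for all positive $k', l'$ with
$k' + l' \le n$. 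The complement of the statement of Lemma \ref{double}
(translated back through the same substitution) is precisely this
superadditivity of $e$ on the range $k' + l' \le n - (\beta - 1)$, i.e.\ when
the ``residual'' $r = k+l-n$ is at least... here one must track exactly which
ranges Lemma \ref{double} covers after translation.

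The main obstacle, and the reason this lemma is separated from Lemma
\ref{double}, is the \emph{boundary range} where $k'+l'$ is small, meaning $r$
is close to $n$ and one of $k,l$ is large while the other is small. Lemma
\ref{double} is phrased for $\beta \le k+l \le n$, which after substitution
governs $e$-superadditivity on an intermediate band of sizes; it does not
directly certify the inequality when $k'+l'$ is tiny (a few vertices) or when
one of $k', l'$ is below the threshold $n^{1/2}$ where the concentration
estimates of Lemma \ref{concentration} and Lemma \ref{helper} no longer apply.
In those small-index corners I would handle the cases by hand using the
degree-sequence gap estimates already proven: Lemma \ref{gap} controls
$\sum_{i=1}^{k}(d_i - d_1) > \binom{k}{2}$ for $k$ up to $\Theta(\sqrt{n\log
n})$, which is exactly what is needed to show that adding a few low-degree
vertices genuinely increases $e$ superadditively, while for the mid-range
indices I would invoke the per-vertex inequalities \eqref{single} and
\eqref{mid} together with the concentration of $e_n(k)$ from Lemma
\ref{concentration}. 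The delicate point will be stitching the small-$k'$
regime (via \eqref{single}) to the large-$k'$ regime (via Lemma \ref{helper}
and Lemma \ref{double}) without a gap, so that every pair $(k,l)$ with
$k + l \ge n$ is covered; I expect this case analysis around
$k' \approx \sqrt{n}$ and around $k' + l' \approx \beta$ to be where the real
care is required.
```
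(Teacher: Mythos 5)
Your opening reduction is the same as the paper's first step: using the identity $e(k)+a(n-k)=a(n)$, Lemma \ref{prob} is indeed equivalent to strict superadditivity of $e$, i.e.\ $e(k')+e(l')<e(k'+l')$ for all $k',l'\ge 1$ with $k'+l'\le n$. The genuine gap is the plan to obtain this from Lemma \ref{double} ``after translation'': no translation of Lemma \ref{double} covers any of it. Substituting $a(k)=a(n)-e(n-k)$ into $a(k)+a(l)>a(k+l)$ (valid for $\beta\le k+l\le n$) yields $e(k'')+e(l'')<e(n)+e(k''+l''-n)$ for indices with $k''+l''\ge n$ --- a four-term inequality on the \emph{complementary} range, which says nothing about $e(k')+e(l')<e(k'+l')$ when $k'+l'\le n$. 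Worse, the superadditivity you need is, under the very substitution you set up, literally a restatement of Lemma \ref{prob} itself, so the proposed reduction is circular. Structurally, Lemma \ref{double} and Lemma \ref{prob} are the two \emph{independent} inputs to the integer program (the random-graph analogues of \eqref{conv_type_ineq_1} and \eqref{conv_type_ineq_2}), and neither implies the other; this is exactly why the paper proves them separately, just as in the regular case Lemma \ref{ext_le} is proved from scratch rather than deduced from the hypothesis $a(k)+a(l)>a(k+l)$ of Theorem \ref{thatsit}.

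Consequently the actual probabilistic content of the lemma is missing from your sketch. The paper proves $e(k)+e(l)<e(k+l)$ (for $k+l\le n$, w.l.o.g.\ $k\ge l$) directly in three cases: when $k\ge l/p$, it grows a densest $k$-set one vertex at a time via Lemma \ref{chernoff}, gaining more than $\frac{p}{2}kl>\binom{l}{2}\ge e(l)$ edges; when $n^{1/2}\le k< l/p$, Lemma \ref{sim} pins all three quantities to $p\binom{\cdot}{2}+o(k^2)$ and the cross term $pkl\ge p^2k^2$ dominates the error; and when $k<n^{1/2}$ (where Lemma \ref{sim} is unavailable), it first discards $k+l<\log_{1/p} n$ using the clique number of $G_{n,p}$, then partitions $V(H)$ into two halves, takes the densest $k$-set and $l$-set in \emph{separate} halves, invokes the Talagrand-based Lemma \ref{concentration} to certify $e_{H_i}(j)\ge e_H(j)-j^{7/4}$, and applies a Chernoff bound to the still-unrevealed cross edges. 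The tools you propose for the small regime --- \eqref{single}, \eqref{mid}, Lemma \ref{gap}, Lemma \ref{helper} --- are all statements about $a$ (degree sums and $a(k)/k$ monotonicity) and give no handle on $e$-superadditivity at small indices; and concentration of $e_n(k)$ around the unknown quantile $x_k$ cannot by itself produce a strict inequality between $e(k)+e(l)$ and $e(k+l)$. The idea your proposal lacks is the two-half exposure, which makes the cross edges between the two dense sets independent of their choice, converting concentration into superadditivity.
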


\begin{remark}
If $p > \frac{1}{2}$, then w.h.p.\ the minimum degree of $G_{n,p}$ is at least $\frac{n}{2}$, and in this case Corollary \ref{ext_co} already implies Lemma \ref{prob}. Regardless, the proof below works for all $0 < p < 1$.
\end{remark}

\begin{proof}
As we defined in Definition \ref{dense}, $e(k) = \max_{A \in \binom{V(H)}{k}} e(A)$, where $e(A)$ denotes the number of edges in the induced subgraph on $A$. Just as in Corollary \ref{ext_co}, to prove Lemma \ref{prob}, it is enough to show that $e(k) + e(l) < e(k+l)$ whenever $k + l \le n$. Without loss of generality, let us assume that $k \ge l$.

\noindent \emph{\textbf{Case 1: $\boldsymbol{k \ge \frac{l}{p}}$.}}
Let $A \in \binom{V(H)}{k}$ such that $e(k) = e(A)$. Let $A_0 = A$, and define $A_{i+1}$ recursively by adding some $v \in V(H) \setminus A_i$ to $A_i$ such that $N_{A_i}(v) > \frac{p}{2} |A_i|$, which we know exists by Lemma \ref{chernoff}. Now $$e(A_l) > e(k) + \frac{p}{2} k l > e(k) + \binom{l}{2} \ge e(k) + e(l).$$ Note that $|A_l| = k + l$, and hence $e(k) + e(l) < e(k + l)$.

\noindent \emph{\textbf{Case 2: $\boldsymbol{k < \frac{l}{p}}$ and $\boldsymbol{k \geq n^{\frac{1}{2}}}$.}}
We use Lemma \ref{sim} to conclude that w.h.p.\ for all $l \le k < \frac{l}{p}$ with $k \ge n^{\frac{1}{2}}$, we have the following. 
$$e(k) < p\binom{k}{2} + o(k^2), \text{	} e(l) < p\binom{k}{2} + o(k^2), \text{	} e(k+l) > p\binom{k+l}{2} - o\left((k+l)^2\right).$$
Hence we have that 
\begin{align*}
e(k+l) &> p\binom{k+l}{2} - o\left((k+l)^2\right) \\
&= p\binom{k}{2} + p\binom{l}{2} + pkl - o\left(k^2\right) \\
&> p\binom{k}{2} + p\binom{l}{2} + p^2k^2 - o\left(k^2\right) \\
&> e(k) + e(l).
\end{align*}

\noindent \emph{\textbf{Case 3: $\boldsymbol{k < \frac{l}{p}}$ and $\boldsymbol{k < n^{\frac{1}{2}}}$.}}
Let $b = \frac{1}{p}$. It is well-known that the largest clique in $G_{n,p}$ has order $\approx 2 \log_{b} n$ (see, e.g., \cite{M} or Theorem 7.3 in \cite{FK}). So, it is enough to handle this case only when $k + l \ge \log_{b} n$.
Fix $k,l \in \mathbb{N}$ such that $k < \frac{l}{p}$, $k < n^{\frac{1}{2}}$ and $k + l \ge \log_{b} n$. Partition the vertex set $V(H)$ into two almost equal sets $V_1,V_2$ such that $||V_1| - |V_2|| \le 1$. Now the strategy is to first reveal the edges (edges are there with probability $p$ independently) in the induced subgraphs on $V_1$ and $V_2$, then find the densest $k$-vertex subset $A_1 \subseteq V_1$ and $l$-vertex subset $A_2 \subseteq V_2$, and finally after revealing all the edges between $A_1$ and $A_2$ we argue that the number of edges between $A_1$ and $A_2$ cannot be too small. Let $H_i$ denote the induced subgraph on $V_i$ for $i = 1,2$. Clearly, the distribution of the induced subgraph on $V \subset V(G_{n,p})$ is same as $G_{|V|,p}$.
From Lemma \ref{concentration}, we know that for $j = k,l$, 
\begin{equation}\label{eqt1}
e_H(j) - e_{H_i}(j) \leq j^{\frac{7}{4}} \text{ for } i = 1,2.
\end{equation} 
Let $A_1 \in \binom{V_1}{k}$ such that $e_{H_1}(k) = e(A_1)$, and let $A_2 \in \binom{V_2}{l}$ such that $e_{H_2}(l) = e(A_2)$. By the Chernoff Bound \eqref{chern}, 
\begin{equation}\label{eqt2}
\mathbb{P}\left[e(A_1, A_2) \leq \frac{1}{2} kl p \right] \le e^{-\frac{klp}{8}} \le e^{-\frac{p^2k^2}{8}} \le e^{-\frac{p^2(\log_b n)^2}{32}},
\end{equation}
where $e(A_1, A_2)$ denotes the number of edges between $A_1$ and $A_2$. If $e(A_1, A_2) > \frac{1}{2} kl p > \frac{p^2}{2} k^2 $, then 
\begin{align}
e_H(k + l) &\geq e_H(A_1 \cup A_2) \nonumber
\\&= e(A_1) + e(A_2) + e(A_1,A_2) \nonumber
\\&> e_{H_1}(k) + e_{H_2}(l) + \frac{p^2}{2} k^2 \nonumber
\\&\geq e_H(k) - k^{\frac{7}{4}} + e_H(l) - l^{\frac{7}{4}} + \frac{p^2}{2} k^2 \label{smallmatter}
\\&> e_H(k) + e_H(l), \nonumber
\end{align}
where step \eqref{smallmatter} uses \eqref{eqt1}.
Now by using \eqref{eqt2}, the probability that there exist $k,l$ in this case with $e(k) + e(l) \geq e(k+l)$ is at most $n e^{-\frac{p^2(\log_b n)^2}{32}} = o(1)$. So, we are done.
\end{proof}

\begin{proof}[\textbf{Completing proof of Theorem \ref{th3}.}]
By using \eqref{single}, \eqref{mid}, Lemma \ref{double}, and Lemma \ref{prob}, the solutions of the integer program \eqref{IP} in Theorem \ref{th1} for the graph $H$ (which was sampled from $G_{n,p}$) are given by
\begin{itemize}
\item If $N = qn + r$ with integers $q > 0$ and $0 \le r \le \gamma$, then 
\begin{equation}\label{couple1}
x_n = q, x_1 = r, \text{ and all other } x_k = 0.
\end{equation}
\item If $N = qn + r$ with integers $q > 0$ and $\beta \le r \le n-1$, then 
\begin{equation}\label{couple2}
x_n = q, x_r = 1, \text{ and all other } x_k = 0.
\end{equation}
\item If $a(\gamma + 1) = (\gamma + 1)a(1)$, and $N = qn + r$ with integers $q > 0$ and $r = \gamma + 1$, then both \eqref{couple1} and \eqref{couple2} are solutions, as it was remarked after the statement of Theorem \ref{th3} in Section 1.
\end{itemize}
Now by using Theorem \ref{th1} and analyzing Algorithm \ref{algo} for $N$-vertex $H$-covered graphs like we did for Proposition \ref{pr}, we can show the uniqueness of the extremal graphs, finishing the proof of Theorem \ref{th3}.
\end{proof}

\section{Concluding remarks} \label{sec6}
\begin{itemize}
\item It will be interesting to consider the problem of maximizing the number of independent sets of order $t > 2$ in an $n$-vertex $H$-covered graph. We have some initial observations that the structure of the optimal graph might be drastically different for $t > 2$. It seems to be a fairly different problem than the case of $t=2$, which is one of the reasons we did not include any of the analysis in this paper.  
\item It is not hard to see that trees and cycles are ideal by a straightforward application of Theorem \ref{th1}. It will be interesting to classify all graphs that are ideal. In Theorem \ref{th2}, we have seen that $n$-vertex $d$-regular graphs with $d \ge \frac{n-1}{2}$ are ideal. It might be worth searching for other nice classes of ideal graphs.  
\item On the generalized problem of minimizing the number of copies of a graph $T$ in an $N$-vertex $H$-covered graph, it will be interesting to see what kind of arguments can be made when $T$ is not a complete graph. For example, does Corollary \ref{sat} still hold? The integer program like \eqref{IP} in Theorem \ref{th1} will still give a lower bound, but may not be the correct answer because there is the possibility of extra copies of $T$ in $\L_{T,N}^H$, i.e., in Definition \ref{intersect} the graph $\L_{T,N}^H$ may have some extra copies of $T$ having vertices both in $V_1$ and $V_2$, which did not happen when $T$ was complete. 
\item In the context of Theorem \ref{th2}, it will be nice to investigate if the random $d$-regular graph $G_{n,d}$ for $d < \frac{n-1}{2}$ is ideal with high probability. It is not hard to notice that $G_{n,2}$ is not ideal w.h.p.\ by using the fact that $G_{n,2}$ is just the union of disjoint cycles of at least two different orders. It is worth mentioning that there exist connected $d$-regular graphs that are not ideal.
\item An interesting task will be to consider Theorem \ref{th3} when $p$ is a function of $n$ in $G_{n,p}$. Although our proof extends for some range of $p$ tending to zero, we leave this open for further investigation.
\end{itemize}

\section{Acknowledgement}
We thank Stijn Cambie and Ross Kang for pointing out a few minor problems in the previous version of this paper. We are grateful to the anonymous referees for helping us improve the exposition of this paper.

\end{document}